\newtheorem{theorem}{Theorem}
\newtheorem{lemma}{Lemma}[section]
\newtheorem{proposition}[lemma]{Proposition}
\newtheorem{definition}{Definition}[section]}
\newtheorem{remark}{Remark}[section]}
\newtheorem{comment}{Comment}[section]}
\newenvironment{proof}[1][Proof]{\begin{trivlist}
\item[\hskip \labelsep {\bfseries #1}]}{\end{trivlist}}
\newcommand{\qed}{\hfill $\square$}
\def\un{\mathbf{1}} 
\def\cqfd
\def\ee{\varepsilon}
\newcommand{\noi}{\noindent}
\newcommand{\Ppsi}{\Psi}
\newcommand{\ee}{\varepsilon}
\newcommand{\ree}{\mathtt{e}}
\newcommand{\rN}{\mathrm{N}}
\def\cq{ \hfill $\square$}
\def\cqfd{ \hfill $\blacksquare$}
\def\rZ{\mathtt{Z}}
\newcommand{\bE}{\mathbf{E}}
\newcommand{\bN}{\mathbf{N}}
\newcommand{\bP}{\mathbf{P}}
\newcommand{\bbD}{\mathbb{D}}
\newcommand{\bbE}{\mathbb{E}}
\newcommand{\bbN}{\mathbb{N}}
\newcommand{\bbP}{\mathbb{P}}
\newcommand{\bbQ}{\mathbb{Q}}
\newcommand{\bbR}{\mathbb{R}}
\newcommand{\cE}{\mathcal{E}}
\newcommand{\cF}{\mathcal{F}}
\newcommand{\cM}{\mathcal{M}}
\newcommand{\cN}{\mathcal{N}}
\newcommand{\cZ}{\mathcal{Z}}
\newcommand{\ccB}{\mathscr{B}}
\newcommand{\ccE}{\mathscr{E}}
\newcommand{\ccF}{\mathscr{F}}
\newcommand{\ccG}{\mathscr{G}}
\newcommand{\ccM}{\mathscr{M}}
\newcommand{\ccP}{\mathscr{P}}
\newcommand{\ccQ}{\mathscr{Q}}
\begin{document}

\title{ \textbf{\textsc{On the Eve property for CSBP}}}
\author{Thomas {\sc Duquesne}, Cyril {\sc Labb\'e} 
\thanks{
\textsl{Mail address:} LPMA,
Univ.~P.~et M.~Curie (Paris 6), Bo\^ite 188, 4 place Jussieu, 75252 Paris Cedex 05, FRANCE.
\textsl{Email:} thomas.duquesne@upmc.fr and cyril.labbe@upmc.fr} }
\vspace{2mm}

\date{\today}

\maketitle

\begin{abstract} We consider the population model associated to 
continuous state branching processes and we are interested in the so-called Eve property that asserts the existence of an ancestor with an overwhelming progeny at large times, and more generally, in the possible behaviours of the frequencies among the population at large times. 
In this paper, we classify all the possible behaviours according to the branching mechanism of the continuous state branching process.

\medskip

\noindent
{\bf AMS 2010 subject classifications}: Primary 60J80; Secondary  60E07, 60G55. \\
 \noindent
{\bf Keywords}: {\it Continuous state branching process, Eve, dust, Grey martingale, frequency distribution.}
\end{abstract}

\section{Introduction}
\label{introsec}

Continuous State Branching Processes (CSBP for short) have been introduced by Jirina \cite{Jir58} 
and Lamperti \cite{Lam67b, Lam67a, Lam67c}. They are the scaling limits 
of Galton-Watson processes: see  
Grimvall \cite{Gri74} and Helland \cite{Hel78} for general functional limit theorems. 
They represent the random evolution of the size of a \textit{continuous} population. Namely, if $Z=(Z_t)_{t\in [0, \infty)}$ is a CSBP, the population at time $t$ can be 
represented as the interval $[0, Z_t]$. In this paper, we focus on the following question: \textit{as $t \! \rightarrow \! \infty$, does the population concentrate on the progeny of a single ancestor $\ree \! \in \! [0, Z_0]$ ?} If this holds true, then we say that \textit{the population has an Eve}. More generally, we 
discuss the asymptotic frequencies of settlers.
A more formal definition is given further in the introduction. 

 The Eve terminology was first introduced by Bertoin and Le Gall~\cite{BerLeG03} for the generalised Fleming-Viot process. Tribe~\cite{Tribe92} addressed a very similar question for super-Brownian motion with quadratic branching mechanism, while in Theorem 6.1~\cite{DK99} Donnelly and Kurtz gave a particle system interpretation of the Eve property. In the CSBP setting, the question has been raised for a general branching mechanism in~\cite{Labb�13}. Let us mention that Grey~\cite{Gre74} and Bingham~\cite{Bin76} introduced martingale techniques to study the asymptotic behaviours of CSBP under certain assumptions on the branching mechanism: to answer the above question in specific cases, we extend their results using slightly different tools. For related issues, we also refer to Bertoin, Fontbona and Martinez~\cite{BerFonMar08}, Bertoin~\cite{Ber09} and Abraham and Delmas~\cite{AbraDel09}. 
 
 Before stating the main result of the present paper, we briefly recall basic properties of CSBP, whose proofs can be found in Silverstein \cite{Sil68}, Bingham \cite{Bin76}, Le Gall \cite{LeG99} or 
Kyprianou \cite{Kyp06}. CSBP are $[0, \infty]$-Feller processes whose only two absorbing states are $0$ and $\infty$ and 
whose transition kernels $(p_t(x, \, \cdot \, ) \, ; \, t \! \in \! [0, \infty), x \! \in \! [0, \infty])$ satisfy the 
so-called \textit{branching property}:    
\begin{equation}
\label{branchprop}
 \forall x, x^\prime \in [0, \infty]\, , \; \forall t \in [0, \infty) \, , \quad  p_t (x, \, \cdot \, ) \ast 
p_t (x^\prime , \, \cdot \, )= p_t (x+x^\prime , \, \cdot) \; .
\end{equation}
Here, $\ast$ stands for the convolution product of measures. We do not consider CSBP that jump to $\infty$ on a single jump. Since the two absorbing points $0$ and $\infty$ belong to the state-space, the transition kernels are true probability measures 
on $[0, \infty]$ and they are characterised by their \textit{branching mechanism} 
$\Ppsi : [0, \infty) \rightarrow \bbR$ as follows: for any $t, \lambda , x \! \in \! [0, \infty )$, 
\begin{equation}
\label{udef}
\int_{[0, \infty)} \!\!\!   p_t (x, dy)  \,
\exp (- \lambda y )    
= 
\exp \! \big(\!-  x u(t, \lambda) \big) \; ,
\end{equation} 
where $u (\, \cdot\, , \lambda)$ is a 
$[0, \infty)$-valued function that satisfies $\partial_t u \, (t, \lambda) \! = \!  -\Ppsi (u(t, \lambda))$ and $u(0, \lambda) \! =\!  \lambda$. For short, we write CSBP($\Ppsi, x$) for continuous state 
branching process with branching mechanism $\Ppsi$ and initial value $x$. 
The branching mechanism $\Ppsi$ is necessarily of the following L\'evy-Khintchine form: 
\begin{equation}
\label{LKform}
\forall \lambda \in [0, \infty) \, , \quad 
\Ppsi(\lambda) = \alpha \lambda + \beta \lambda^2 + \int_{(0,\infty)} \!\!\! \!  \!\!\! \!   \pi (dr ) \, \big(e^{-\lambda r}-1+\lambda r  \un_{ \{r<1\}}\big)\, , 
\end{equation}
where $\alpha \! \in \! \bbR$, $\beta \! \geq \! 0$ and $\pi$ is a Borel measure on $(0,\infty)$ such that $\int_{(0,\infty)}(1\wedge r^2)\, \pi(dr)\! <\! \infty$. 
We recall that a CSBP with branching mechanism $\Ppsi$ 
is a time-changed spectrally positive L\'evy process whose Laplace exponent is $\Ppsi$: see for instance Lamperti \cite{Lam67b} and Caballero, Lambert and Uribe Bravo~\cite{CabLamUri09}. 
Consequently, the sample paths of a cadlag CSBP have no negative jump. Moreover, a CSBP has infinite variation sample paths iff the corresponding L\'evy process has infinite variation sample paths, which is equivalent to the following assumption:  
\begin{equation}
\label{infvar}
\hspace{-51mm}\textrm{(\textit{Infinite variation}) \hspace{18mm}}  \beta >0 \qquad \textrm{or} \qquad \int_{(0, 1)} \! \!\! r \, \pi (dr) = \infty \; . 
\end{equation}
Therefore, the finite variation cases correspond to the following assumption:
\begin{equation}
\label{fvar}
\hspace{-51mm}\textrm{(\textit{Finite variation}) \hspace{18mm}}  \beta =0 \qquad \textrm{and} 
\qquad \int_{(0, 1)} \! \!\! r \, \pi (dr) < \infty \; . 
\end{equation}
In the finite variation cases, $\Ppsi$ can be rewritten as follows: 
\begin{equation}\label{psifvar} 
\forall \lambda \in [0, \infty) , \quad \Ppsi (\lambda) = D \lambda -\!\! \int_{(0,\infty)} \!\!\! \! \!\!\! \!   \pi (dr ) \, (1-e^{-\lambda r} )\; ,  \quad \textrm{where} \quad D:= \alpha + \int_{(0,1)}  \!\!\! \! \!\!\! \! r \,  \pi (dr ) \; .
 \end{equation}
In these cases, note that $D= \lim_{\lambda \rightarrow \infty} \Ppsi (\lambda)/ \lambda$.

  We shall always avoid the cases of deterministic CSBP that correspond to linear branching mechanisms. Namely, \textit{we shall always assume that either $\beta >0$ or $\pi \neq 0$}.

  Since $\Ppsi$ is convex, it has a right derivative at $0$, that is possibly equal to $-\infty$. Furthermore, $\Ppsi$ has at most two roots. We introduce the following notation:  
\begin{equation}
\label{malthdef}
\Ppsi^\prime (0+):= \lim_{{\lambda \rightarrow 0+}} \lambda^{-1}\Ppsi (\lambda)\;  \in [-\infty, \infty) \quad 
\textrm{and} \quad  \gamma = \sup \big\{ \lambda \! \in \! [0, \infty ): \Ppsi (\lambda) \!\leq \! 0  \big\} \ . 
\end{equation}
Note that $\gamma \! > \! 0$ iff $\Ppsi^\prime (0+) \! <\!  0$, and that $\gamma \! =\! \infty$ iff $-\Ppsi$ is the Laplace exponent of a subordinator. 

We next discuss basic properties of the function $u$ defined by (\ref{udef}). 
The Markov property for CSBP entails 
\begin{equation}
\label{flow}
\forall t,s, \lambda \! \in \! [0, \infty) , \quad   u(t\!+\!s, \lambda)= u(t,u(s, \lambda)) \quad \textrm{\rm and} \quad \partial_t u \, (t, \lambda)\!=\! - \Ppsi (u(t, \lambda) )  , \, u(0, \lambda)\! = \!   \lambda  .
\end{equation} 
If $\lambda \!  \in \! (0, \infty)$, then 
$u(\, \cdot \, , \lambda)$ is the unique solution of (\ref{flow}). If 
$\lambda \! = \! \gamma $, then $u( \, \cdot \, , \gamma)$ is constant to $\gamma$. An easy argument derived  from (\ref{flow}) entails the following:  
if $\lambda \! >\! \gamma $ (resp.~$\lambda \! <\! \gamma $), then $u(\, \cdot \, , \lambda)$
is decreasing (resp.~increasing). 
Then, by an easy change of variable, (\ref{flow}) implies 
\begin{equation}
\label{integeq}
\forall t \in [0, \infty)  , \, \; \forall \lambda \in (0, \infty) \backslash \{ \gamma \}  , \qquad \int_{u(t, \lambda)}^{\lambda} \frac{du}{\Ppsi (u)} = t \; .
\end{equation}
 
For any $x\in [0, \infty]$, we denote by $\bbP_x$ the canonical law of CSBP($\Ppsi , x$) on the 
Skorohod space of cadlag $[0, \infty]$-valued functions that is denoted by 
$\bbD ([0, \infty), [0, \infty])$. We denote by $\rZ= (\rZ_{t})_{t\in [0, \infty)}$ the canonical process on $\bbD ([0, \infty), [0, \infty])$. As $t \! \rightarrow \! \infty$, a 
CSBP either converges to $\infty$ or to $0$. More precisely,  
\begin{equation} 
\label{asymZ}
\forall x \in (0, \infty)  , \quad  e^{-\gamma x}= \bbP_x \big( \lim_{^{t\rightarrow \infty}} \rZ_t = 0 \big)=1- \bbP_x \big( \lim_{^{t\rightarrow \infty}} \rZ_t = \infty \big) \; .
\end{equation}
If $\Ppsi^\prime (0+) \!  > \!  0$ (resp.~$\Ppsi^\prime (0+) \!  = \!  0$), then $\gamma \! = \! 0$ and the CSBP gets extinct: $\Ppsi$ is said to be \textit{sub-critical} (resp.~\textit{critical}). If $\Ppsi^\prime (0+) \! <\! 0$, then $\gamma \!>\! 0$ and the 
CSBP has a positive probability to tend to $\infty$: $\Ppsi$ is said to be \textit{super-critical}.  

Let us briefly discuss \textit{absorption}: let $\zeta_0$ and $\zeta_\infty $ be the times of absorption in resp.~$0$ and $\infty$. Namely:
\begin{equation}   
\label{absordef}    
\zeta_{0} \!=\!  \inf \big\{ t\! >\! 0: \textrm{$\rZ_t$ or $\rZ_{t-}= 0$}  \big\}  , \; \, 
\zeta_{\infty}\! =\! \inf \big\{ t\! >\! 0:  \textrm{$\rZ_t$ or $\rZ_{t-}= \infty$}  \big\}  \; \, {\rm and} \; \, \zeta \! =\!  \zeta_0 \wedge \zeta_\infty  , 
\end{equation} 
with the usual convention: $\inf \emptyset \! =\!  \infty$. We call $\zeta$ the time of absorption. The integral equation (\ref{integeq}) easily implies the following: 
\begin{equation}
\label{conserv}
\hspace{-20mm}\textrm{(\textit{Conservative} $\Ppsi$) \hspace{10mm}} 
\forall x \in [0, \infty) , \quad  \bbP_x ( \zeta_\infty <\infty) =0   \quad  \Longleftrightarrow \quad \int_{0+} 
\frac{\! dr}{(\Ppsi (r))_-} = \infty \; . 
\end{equation}
Here  $(\, \cdot \, )_-$ stands for the negative part function. If $\Ppsi$ 
is \textit{non-conservative}, namely if 
\begin{equation}
\label{nonconserv}
\hspace{-62mm}\textrm{(\textit{Non-conservative} $\Ppsi$) \hspace{30mm}} 
\int_{0+} 
\frac{\! dr}{(\Ppsi (r))_-} < \infty \; ,  
\end{equation}
then, $\Ppsi^\prime (0+)\! =\! -\infty$ and for any 
$t , x \! \in \! (0, \infty)$, $\bbP_x ( \zeta_\infty \!>\! t) \! =\!  \exp (\! -x\kappa (t))$, where $\kappa (t)\! :=\! \lim_{\lambda \rightarrow 0+} u(t, \lambda)$ satisfies 
$\int_{0}^{\kappa (t)} \! dr /(\Ppsi (r))_- \!=\! t$. 
Note that $\kappa : (0, \infty) \! \longrightarrow  \! (0, \gamma)$ is one-to-one and 
increasing. Thus, 
$\bbP_x $-a.s.~$\lim_{t\rightarrow \infty} \rZ_t \!=\! \infty $ iff $\zeta_\infty \! <\!  \infty $ and in this case, $\lim_{t\rightarrow \zeta_\infty -} \rZ_t \!=\! \infty$. Namely, the process reaches $\infty $ continuously.

The integral equation (\ref{integeq}) also implies the following: 
\begin{equation}
\label{persist}
\hspace{-19mm}\textrm{(\textit{Persistent} $\Ppsi$) \hspace{12mm}} 
\forall x \in [0, \infty) , \quad \bbP_x ( \zeta_0 <\infty) =0 \quad    \Longleftrightarrow \quad  
\! \int^{\infty} \frac{ dr}{\Ppsi (r)} = \infty \; . 
\end{equation}
If  \textit{$\Ppsi$ 
allows extinction in finite time}, namely if 
\begin{equation}
\label{nonpersist}
\hspace{-65mm}\textrm{(\textit{Non-persistent} $\Ppsi$) \hspace{30mm}} 
\int^{\infty} \frac{ dr}{\Ppsi (r)} < \infty \; ,  
\end{equation}
it necessarily implies that $\Ppsi$ satisfies (\ref{infvar}), 
namely that $\Ppsi$ is of infinite variation type.
In this case, for any 
$t , x \! \in\! (0, \infty)$, $\bbP_x ( \zeta_0 \! \leq \!  t) \! =\!  \exp (\! -x v(t))$ where $v (t) \! :=\! 
 \lim_{\lambda \rightarrow \infty} u(t, \lambda)$ satisfies 
$\int_{v(t)}^\infty \! dr/\Ppsi (r) \! =\!  t$. 
Note that $v : (0, \infty) \! \longrightarrow \! ( \gamma , \infty)$ is one-to-one and 
decreasing. Thus, $\bbP_x $-a.s.~$\lim_{t\rightarrow \infty} \rZ_t \! =\!  0$ iff $\zeta_0 \!<\!  \infty $. 

\smallskip

  The previous arguments allow to define $u$ for negative times. Namely, for all $t\! \in \! (0, \infty)$, set $\kappa (t)\!=\! \lim_{\lambda \rightarrow 0+} u(t, \lambda)$ and $v(t)\! =\!   \lim_{\lambda \rightarrow \infty} u(t, \lambda)$. As already mentioned, $\kappa (t)$ is positive if $\Ppsi$ is non-conservative and 
null otherwise and $v(t)$ is finite if $\Ppsi$ is non-persistent and infinite otherwise. Then, observe that 
$u(t, \, \cdot \, ) : (0, \infty) \! \longrightarrow \! (\kappa (t), v(t) )$ is increasing and one-to-one. We denote by $u(-t, \, \cdot \, ):  (\kappa (t), v(t) )\! \longrightarrow \! (0, \infty)$ the reciprocal function. 
It is plain that (\ref{integeq}) extends to negative times. Then, observe that $\partial_t u(-t, \lambda)= 
\Ppsi (u(-t, \lambda))$ and that (\ref{flow}) extends to negative times as soon as it makes sense.

\medskip

 Let us give here the precise definition of the Eve property. To that end, we fix $x \! \in \! (0, \infty)$ and denote by $\ccB([0, x])$ the Borel subsets of $[0, x]$. We also 
denote by $\ccM ([0, x])$ the set of positive Borel-measures on $[0, x]$ and by $\ccM_1([0, x])$ the set of Borel probability measures.   
Let us think of $m_t \! \in \! \ccM_1([0,x])$, $t \!\in \! [0, \infty)$, 
as the frequency distributions of a continuous population whose set of ancestors is $[0, x]$ and that evolves through time 
$t$. Namely for any Borel set $B\! \subset \!  [0, x]$, $m_t (B)$ is the frequency of the individuals at time $t$ whose ancestors belong to $B$. 
The relevant convergence mode is the 
\textit{total variation norm}:  
$$\forall  \mu, \nu \! \in \!  \ccM_1([0, x]) \, ,  \qquad \lVert \mu-\nu \rVert_{{\rm var}}= 
\sup \big\{ \, \lvert \mu (A) \!- \! \nu (A)\rvert \, ; \;  A\in \ccB([0, x])\,  \big\} \; .$$ 
Here, it is natural to assume that $t \mapsto m_t $ is cadlag in total variation norm. The Eve property can be defined as follows. 
\begin{definition}
\label{evedef} We denote by $\ell $ the Lebesgue measure on $\bbR$ (or its restriction to $[0, x]$ according to the context). 
Let $t\! \in \! (0, \infty)\! \longmapsto \! m_t\! \in\!  \ccM_1 ([0, x])$ be cadlag with respect to 
$\lVert \cdot  \rVert_{\textrm{var}}$ and assume that there exists $m_\infty \! \in \! \ccM_1 ([0, x])$ such that $\lim_{t\rightarrow \infty} \lVert m_t -m_\infty  \rVert_{\textrm{var}}= 0$, where 
\begin{equation}\label{minftyform}  
m_\infty= a \, \ell +  \sum_{y\in S}m_\infty (\{ y\})\delta_y  \; . 
\end{equation}
Here, $a$ is called the \textit{dust}, $S$ is a countable subset of $[0, x]$ that is the set of \textit{settlers} and for any $y\! \in \! S$, $m_{\infty} (\{ y\})$ is the \textit{asymptotic frequency} of the settler $y$. 

  If $a\!=\! 0$, then we say that the population $m\!:=\! (m_t)_{t\in (0, \infty)}$ has no dust (although $m_t$ may have a diffuse part at any finite time $t$).  If $a\! =\! 0$ and if 
$S$ reduces to a single point $\ree$, then $m_\infty \! =\!  \delta_\ree$ and the population $m$ 
is said to have an \textit{Eve} that is $ \ree$. Furthermore, if there exists $t_0 \! \in \! (0, \infty)$ 
such that $m_t \!= \! \delta_\ree$, for any $t\!>\!t_0$, then we say that the population has an \textit{Eve in finite time}. \qed
\end{definition}

The following theorem asserts the existence of a regular version of the frequency distributions associated with a CSBP. 
\begin{theorem}   
\label{construc} 
Let $x \!\in \!(0, \infty)$. 
Let $\Ppsi$ be a branching mechanism 
of the form (\ref{LKform}). We assume that $\Ppsi$ is not linear. Then, 
there exists a probability space $(\Omega, \ccF, \bP)$ on which the two following processes are defined. 
\begin{enumerate}
\item[(a)] $Z=(Z_t)_{t\in [0, \infty)}$ is a cadlag CSBP$\, (\Ppsi, x)$.  
\item[(b)] $M=(M_t)_{t\in [0, \infty]}$ is a $\ccM_1 ([0, x])$-valued process that is 
$\lVert \cdot \rVert_{\mathrm{var}}$-cadlag on $(0, \infty)$ such that 
$$ \forall B \in \ccB ([0, x]) , \quad  \textrm{$\bP$-a.s.~$\lim_{t\rightarrow 0+} M_t (B)= x^{-1} \ell (B)$.} $$ 
\end{enumerate} 

\noi
The processes $Z$ and $M$ satisfy the following property: 
for any Borel partition $B_1, \ldots , B_n$ of $[0, x]$ there exist $n$ independent cadlag CSBP$(\Ppsi)$, $Z^{(1)}, \ldots, \, Z^{(n)}$, 
with initial values $\ell(B_1), \ldots , \,  \ell (B_n)$, such that 
\begin{equation}
\label{Mdef}
\hspace{-10mm}\forall k\in \{ 1, \ldots , n \}\, , \;\,  \forall t \in [0, \zeta) \, , \qquad M_t (B_k)= 
Z^{(k)}_t /Z_t \; , 
\end{equation} 
where $\zeta$ stands for the time of absorption of $Z$. 
\end{theorem}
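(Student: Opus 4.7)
The construction rests on the branching property \eqref{branchprop}, which is exactly what makes a CSBP "splittable'' into independent CSBPs summing to the whole. My plan is to first build, via iterated use of the branching property, a consistent family of independent CSBPs attached to the dyadic subintervals of $[0,x]$; to then realize the $M_t$ as the probability measure whose distribution function on $[0,x]$ is the cumulative profile of these pieces divided by $Z_t$; and finally to extend the partition property from dyadic intervals to arbitrary Borel partitions, and to check the regularity statements.

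\textbf{Step 1: dyadic splitting.} For every $n\geq 0$, set $I_{n,k}=((k{-}1)x2^{-n},kx2^{-n}]$, $k=1,\ldots,2^n$. Using \eqref{branchprop} inductively, I would produce on a single probability space $(\Omega,\ccF,\bP)$ a cadlag CSBP$(\Ppsi,x)$ called $Z$ together with cadlag CSBP$(\Ppsi)$ processes $Z^{n,k}$, $k=1,\ldots,2^n$, each with initial value $x2^{-n}$, which are independent for fixed $n$ and satisfy the compatibility relations $\sum_{k=1}^{2^n}Z^{n,k}=Z$ and $Z^{n,k}=Z^{n+1,2k-1}+Z^{n+1,2k}$ for every $k,n$. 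The passage from level $n$ to level $n+1$ is a conditional splitting of each $Z^{n,k}$ into two independent CSBPs of initial value $x2^{-n-1}$ whose sum is $Z^{n,k}$, which is allowed by \eqref{branchprop}; the simultaneous existence of the whole family follows from Kolmogorov extension applied to the projective system of dyadic partitions.

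\textbf{Step 2: from dyadic profiles to $M_t$.} For every dyadic $y=kx2^{-n}\in[0,x]$, define $X_t(y)=\sum_{j\leq k}Z^{n,j}_t$; consistency makes this independent of $n$. For fixed $t$, $y\mapsto X_t(y)$ is nondecreasing on the dyadics with $X_t(0)=0$ and $X_t(x)=Z_t$; extend it to $[0,x]$ by right-continuity. On $\{t<\zeta\}$ set $F_t(y)=X_t(y)/Z_t$ and let $M_t$ be the probability measure on $[0,x]$ with distribution function $F_t$; on $\{t\geq \zeta\}$ extend $M_t$ by a convention that will not affect the statement on $[0,\zeta)$.

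\textbf{Step 3: partition property.} For a Borel partition $B_1,\ldots,B_n$ consisting of dyadic intervals, the process $Z^{(k)}_t:=M_t(B_k)Z_t$ is a sum of finitely many $Z^{n',j}$ and is therefore, by independence at level $n'$ and the branching property, a cadlag CSBP$(\Ppsi,\ell(B_k))$, with $(Z^{(k)})_k$ independent. For a general Borel partition, I would approximate each $B_k$ in $\ell$-measure by finite unions of dyadic intervals and transfer the branching property to the limit through the Laplace transform \eqref{udef}: the joint Laplace functional of the candidate $Z^{(k)}_t = Z_t M_t(B_k)$ factors as required, which identifies them as independent CSBPs with the prescribed initial values.

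\textbf{Step 4: regularity and initial condition.} As $t\to 0+$, $Z^{n,k}_t\to x2^{-n}$ in probability by the Feller property, so for every dyadic $y$ one has $F_t(y)\to y/x$, and since the limit distribution function is continuous this upgrades to convergence of $M_t$ to $x^{-1}\ell$ in total variation; then $\bP$-a.s.\ convergence of $M_t(B)$ for a fixed $B$ follows by a standard separable-modification argument. The cadlag regularity in total variation on $(0,\infty)$ is the serious point: between jumps of $Z$ the individual $Z^{n,k}$ evolve continuously (in the infinite variation case) or as finite-variation processes sharing the same drift, and at a jump time $s$ of $Z$ exactly one of the $Z^{n,k}$ jumps (for every $n$), which creates a single atom in $M_s$ of mass $\Delta Z_s/Z_s$ and is consistent in $n$; one then controls $\|M_t-M_{s}\|_{\mathrm{var}}$ on small intervals by the total mass of jumps of $Z$ on that interval divided by $Z_{s-}$, which is finite and goes to $0$ as $t\downarrow s$.

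\textbf{Main obstacle.} I expect the real work to be in Step~4, specifically the right-continuity and existence of left limits in the \emph{total variation} norm, which is much stronger than the weak topology one usually gets from distribution-function convergence. The key input is that each jump of $Z$ lifts to a jump of exactly one $Z^{n,k}$ at every level, so that the atoms created in $M_t$ are coherent across scales and their contribution to $\|\cdot\|_{\mathrm{var}}$ is governed by the jump measure $\pi$ through the Lévy--Khintchine form \eqref{LKform}.
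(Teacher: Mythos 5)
Your route (dyadic splitting, Kolmogorov extension, Borel approximation) is genuinely different from the paper's, which builds $\cZ_t$ directly from a Poisson point measure whose intensity involves the cluster measure $\rN_\Ppsi$ of Theorem \ref{cluster} in the infinite variation case, or the L\'evy measure $\pi$ in the finite variation case. The genuine gap is in Step~4, and it is not a matter of filling in routine details: two of the arguments you sketch fail. First, weak convergence of distribution functions at $t=0$ does \emph{not} upgrade to total-variation convergence, and in fact $\lim_{t\to 0+}\lVert M_t - x^{-1}\ell\rVert_{\mathrm{var}}=0$ is false in the infinite variation case: there $M_t$ is purely atomic for every $t>0$ while $x^{-1}\ell$ is diffuse, so $\lVert M_t - x^{-1}\ell\rVert_{\mathrm{var}}\equiv 1$ for all $t>0$. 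This is exactly why the theorem asserts only the pointwise statement ``for each Borel $B$, $\bP$-a.s.\ $M_t(B)\to x^{-1}\ell(B)$'', and the paper proves it (Lemma \ref{zero}) by observing that $\cZ_\cdot(B)$ has the finite-dimensional laws of a CSBP$(\Ppsi,\ell(B))$, hence a cadlag modification, and identifying the two on $(0,\infty)$.

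Second, the total-variation cadlag regularity on $(0,\infty)$ is not controlled by the jumps of $Z$. In the infinite variation case $\lVert M_t-M_s\rVert_{\mathrm{var}}=\tfrac12\sum_i \lvert \rZ^i_t/Z_t - \rZ^i_s/Z_s\rvert$ is a sum over infinitely many atoms; between jumps of $Z$ every individual term is continuous, but for the sum to converge one needs a \emph{uniform} tail bound, not a bound by $\sum\lvert\Delta Z\rvert$. This is precisely what the Poisson-cluster representation supplies and what your construction does not: Proposition \ref{reguZ} truncates to the finitely many clusters with $\rZ^i_{s_0}>\varepsilon$, proves $d_{\mathrm{var}}$-regularity for that finite sum, and controls the remaining mass uniformly in time using Lemma \ref{pathwkcont}. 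Nothing comparable is available directly from the dyadic family; to close this step you would essentially have to recover the Poisson decomposition of $\cZ_t$ into clusters, at which point you have reproved Theorem \ref{cluster}. (Step~3 is also more delicate than stated: approximating $B_k$ by finite dyadic unions in $\ell$-measure does not give pathwise $\cZ_t$-measure approximation, since $\cZ_t$ is singular with respect to $\ell$ in the infinite variation case; this is repairable by an expectation or monotone-class argument, but again leans on the atomic structure that your construction does not make explicit.)
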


We call $M$ the \textit{frequency distribution process of a CSBP($\Ppsi, x$)}.
If $\Ppsi$ is of finite variation type, then $M$ is $\lVert \cdot \rVert_{\mathrm{var}}$-right continous at time 
$0$, which is not the case if $\Ppsi$ is of infinite variation type as explained in Section \ref{constrMsec}. 
The strong regularity of 
$M$ requires specific arguments: in the infinite variation cases, we need a decomposition  
of CSBP into Poisson clusters, which is the purpose of Theorem \ref{cluster} in Section \ref{clustersec}. 
(see this section for more details and comments).

The main result of the paper concerns the asymptotic behaviour of $M$ 
on the following three events. 
\begin{itemize}
\item $A:=\{ \zeta\! <\! \infty\}$ that is the event of absorption. Note that $\bP(A) \!>\!0$ iff $\Ppsi$ either satisfies (\ref{nonconserv}) or (\ref{nonpersist}), namely iff $\Ppsi$ is either non-conservative or non-persistent. 
\item $B:=\{ \zeta\!=\!\infty \, ;\,  \lim_{t\rightarrow \infty} Z_t \!= \! \infty\}$ that is the event of explosion in infinite time. 
Note that $\bP(B) \!>\!0$ iff $\Ppsi$ satisfies (\ref{conserv}) and $\Ppsi^\prime(0+) \!\in \! [-\infty, 0)$, namely iff 
$\Ppsi$ is conservative and super-critical. 
\item  $C:=\{ \zeta\!=\!\infty \, ; \,  \lim_{t\rightarrow \infty} Z_t \!=\! 0\}$ that is the event of extinction in 
infinite time. Note that $\bP(C) \!>\! 0$ iff $\Ppsi$ satisfies (\ref{persist}) 
and $\gamma \! <\! \infty$. 
\end{itemize}
\begin{theorem}
\label{mainth} We assume that $\Ppsi$ is a non-linear branching mechanism. 
Let $x \! \in \!(0, \infty)$ and let $M$ and $Z$ be as in Theorem \ref{construc}. Then,
$\bP$-a.s.~$\lim_{t\rightarrow \infty} \lVert M_t \!-\!M_\infty \rVert_{\mathrm{var}} \!=\! 0$, where $M_\infty$ is of the form (\ref{minftyform}). Moreover, the following holds true $\bP$-almost surely. 
\begin{itemize} 
\item[(i)] On the event $A=\{ \zeta\! <\! \infty\}$, $M$ has an Eve in finite time.  
\item[(ii)] On the event $B=\{ \zeta\!=\!\infty \, ;\,  \lim_{t\rightarrow \infty} Z_t \!= \! \infty\}$: 
\begin{itemize}
\item[(ii-a)] If $\Ppsi^\prime(0+)\!=\! -\infty$, then $M$ has an Eve; 
\item[(ii-b)] If $\Ppsi^\prime(0+) \!\in \! (-\infty, 0)$ and $\gamma \!<\! \infty$, there is no dust and $M$ has finitely many settlers whose number, under $\bP ( \, \cdot \, | B)$, is distributed as a 
Poisson r.v.~with mean $x\gamma$ conditionned to be non zero; 
\item[(ii-c)] If $\Ppsi^\prime(0+) \!\in\! (-\infty, 0)$ and $\gamma \!= \! \infty$, there is no dust and $M$ has 
infinitely many settlers that form a dense subset of $[0, x]$.  
\end{itemize}

\item[(iii)] On the event $C=\{ \zeta\!=\!\infty \, ; \,  \lim_{t\rightarrow \infty} Z_t \!=\! 0\}$: 
 \begin{itemize}
\item[(iii-a)] If $\Ppsi$ is of infinite variation type, then $M$ has an Eve; 
\item[(iii-b)] If $\Ppsi$ is of finite variation type, then the following holds true:

\begin{itemize}
\item[(iii-b-1)] If $\pi((0, 1)) \! < \! \infty$, then there is dust and $M$ has finitely many settlers whose number, under $\bP (\, \cdot \, | C)$, is distributed as a Poisson r.v.~with mean $\frac{x}{D} \int_{(0, \infty)} e^{-\gamma r} \pi (dr)$; 
\item[(iii-b-2)] If $\pi((0, 1)) \!= \! \infty$ and $\int_{(0, 1)} \pi (dr) \, r\log 1/r <\infty$, then  there is dust and there are infinitely many settlers that form a dense subset of $[0, x]$;
\item[(iii-b-3)] If $\int_{(0, 1)} \pi (dr) \, r\log 1/r =\infty$, then there is no dust and there are infinitely many settlers that form a dense subset of $[0, x]$. 
\end{itemize} 
\end{itemize} 
\end{itemize} 
\end{theorem}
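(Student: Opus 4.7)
The plan is to exploit the additive decomposition of Theorem \ref{construc}: for any Borel partition $B_1,\ldots,B_n$ of $[0,x]$, the frequencies $M_t(B_k)=Z^{(k)}_t/Z_t$ are ratios of independent CSBP$(\Psi,\ell(B_k))$ and $Z=\sum_k Z^{(k)}$. I would first analyse these ratios $\bP$-a.s.\ as $t\to\infty$ for each fixed partition, then pass to the limit along a nested dyadic refinement $(B^{(n)}_j)_{j\leq 2^n}$ and read off the dust, the settlers and their weights of $M_\infty$, using the total-variation regularity of $M$ guaranteed by Theorem \ref{construc}. The three events $A,B,C$ are treated separately and rely respectively on the explicit absorption laws of $Z$, on the Grey additive martingale, and on Bingham-type asymptotics for extinction.

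\textbf{Event $A$ (assertion (i)).}
On $\{\zeta<\infty\}$, the branching property gives $\zeta_\infty=\min_k \zeta_\infty^{(k)}$ in the non-conservative case (from $\bP(\zeta_\infty^{(k)}>t)=\mathrm{e}^{-\ell(B_k)\kappa(t)}$) and $\zeta_0=\max_k \zeta_0^{(k)}$ in the non-persistent case (from $\bP(\zeta_0^{(k)}\leq t)=\mathrm{e}^{-\ell(B_k)v(t)}$). Both laws are continuous in $t$ since $\kappa$ and $v$ are strictly monotone, so the extremal index $k_\star$ is $\bP$-a.s.\ unique within any partition, whence $M_t(B_{k_\star})\to 1$ as $t\uparrow\zeta$. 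Dyadic refinement yields a unique $\mathtt{e}\in[0,x]$ with $M_{\zeta-}=\delta_\mathtt{e}$, and the extension of $M$ past $\zeta$ provided by Theorem \ref{construc} gives $M_t=\delta_\mathtt{e}$ for every $t\geq\zeta$, i.e.\ Eve in finite time.

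\textbf{Event $B$ (assertions (ii)).}
By (\ref{asymZ}), each $Z^{(k)}$ tends to $\infty$ independently with probability $1-\mathrm{e}^{-\gamma\ell(B_k)}$. For the uniform partition of length $x/n$, the number of surviving indices is Binomial$(n,1-\mathrm{e}^{-\gamma x/n})$, converging in law to Poisson$(\gamma x)$ (conditioned on $\geq 1$ on $B$); non-surviving lineages contribute $0$ to $M_\infty$. In case \emph{(ii-b)}, Grey's additive martingale $\exp(-u(-t,\lambda)Z^{(k)}_t)$ (for $\lambda$ chosen so that $u(-t,\lambda)\downarrow 0$) converges a.s.\ to a limit $\exp(-\gamma W^{(k)})$ with $W^{(k)}>0$ on survival, so $Z^{(k)}_t/Z_t\to W^{(k)}/\sum_j W^{(j)}$ and the refinement produces the Poisson$(\gamma x)$ family of atoms. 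In case \emph{(ii-c)} ($\gamma=\infty$), every piece survives, producing a dense set of atoms, and the Poisson intensity $\gamma\,d\ell=\infty$ forces all mass onto atoms (no dust). Case \emph{(ii-a)} ($\Psi'(0+)=-\infty$) requires a finer argument: the Grey limit degenerates and $Z_t$ grows faster than any exponential, so a comparison of $\log Z^{(k)}_t$ and $\log Z^{(k')}_t$ via the asymptotics of $u(-t,\lambda)$ as $\lambda\to 0^+$ forces $Z^{(k)}_t/Z^{(k')}_t$ to tend to $0$ or $\infty$, whence dyadic refinement selects a unique Eve.

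\textbf{Event $C$ (assertions (iii)) and main obstacle.}
In the infinite variation regime \emph{(iii-a)}, Bingham's asymptotics for $-\log Z_t$ together with infinite variation show that among the independent $Z^{(k)}$ a single lineage dominates the decay; combined with the cluster decomposition of Theorem \ref{cluster}, dyadic refinement yields a unique Eve. In the finite variation regime \emph{(iii-b)}, the representation $\Psi(\lambda)=D\lambda-\int(1-\mathrm{e}^{-\lambda r})\pi(dr)$ produces a compound-Poisson genealogy with drift $D$ and jump intensity $\pi$. For \emph{(iii-b-1)}, the settler rate $(x/D)\int \mathrm{e}^{-\gamma r}\pi(dr)$ is the product of the rate $\pi(dr)/D$ of birth events (with the $1/D$ Lamperti factor), the survival probability $\mathrm{e}^{-\gamma r}$ of the progeny of a jump of size $r$ given global extinction, and the ancestral mass $x$; the drift $D$ produces dust. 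For \emph{(iii-b-2)}--\emph{(iii-b-3)} the dust is controlled by the $L\log L$ integrability $\int r\log(1/r)\pi(dr)$, which is the Athreya-Kesten-Stigum criterion for the non-degeneracy of the $L^1$ limit of the martingale $\mathrm{e}^{\Psi'(0+)t}Z_t$. The main obstacle is \emph{(iii-a)}: as both $Z_t$ and each $Z^{(k)}_t$ vanish, standard Grey normalizations are degenerate, so the uniqueness of the Eve must be extracted from Bingham's sharper asymptotics together with the full strength of Theorem \ref{cluster}. A parallel difficulty is the precise Poisson intensity of \emph{(iii-b-1)}, which requires computing the Laplace transform of the settler point process via (\ref{integeq}) and justifying the limit along refining partitions.
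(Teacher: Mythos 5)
Your plan for (i), (ii-b), (ii-c) and (iii-b) is broadly aligned with the paper's strategy. The paper also uses the absorption-time argument for (i) (the extinction/explosion times under the Poisson decomposition have diffuse laws, so a unique ancestor is extremal), and for (ii-b), (ii-c), (iii-b) it normalizes $\cZ_t([0,y])$ by the deterministic factor $u(-t,\theta)$ and shows $u(-t,\theta)\cZ_t([0,\cdot])$ converges a.s.\ to a subordinator $W^\theta$ (Propositions \ref{GreyLarge}, \ref{Greysmall} and Lemmas \ref{asLarge}, \ref{assmall}); the number, dust and density of settlers are then read off from the L\'evy--Khintchine data of $W^\theta$, and the upgrade to total-variation convergence of $M_t$ is Lemma \ref{detcvvar}. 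Your heuristics for the Poisson intensity in (iii-b-1) and the $r\log(1/r)$ drift criterion are consistent with that computation, though they would still need the explicit formula (\ref{driftform}) to become a proof. So far so good.

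The genuine gap is in (ii-a) and (iii-a) -- the cases the paper flags as the core of the result -- where your proposed argument would not go through. You suggest concluding the Eve property from pairwise comparisons $\log Z^{(k)}_t - \log Z^{(k')}_t$ (or Bingham asymptotics) along dyadic refinements. This is insufficient in two ways. First, it is not clear that $Z^{(k)}_t/Z^{(k')}_t$ tends to $0$ or $\infty$: when $\Psi'(0+)=-\infty$, a Seneta--Heyde norming may still exist, so the ratio of two independent copies could have a non-degenerate limit; the actual mechanism for the Eve is subtler and is not deduced from pairwise divergence of logs. Second, even granting pairwise degeneration for each fixed pair, one needs \emph{uniformity} over the infinitely many cells produced by refinement, plus an argument that the extremal cell stabilizes to a single ancestor and that $\lVert M_t-\delta_\ree\rVert_{\mathrm{var}}\to 0$; a countable intersection of a.s.\ pairwise statements does not yield that. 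What the paper actually does is prove the quantitative two-point estimate of Lemma \ref{keyL}:
\begin{equation*}
\bE\big[\un_{\{R^{-1}_t(U)\neq R^{-1}_{t+s}(V)\}}\,(1-e^{-\theta Z_{t+s}})\big]
= x^2\!\int_0^{v(t)}\!\! dw\,\Psi(w)\,e^{-xw}\,\frac{u(-t,w)-(u(-t,w)-u(s,\theta))_+}{\Psi(u(-t,w))}\,,
\end{equation*}
for two independent uniform picks $U,V$, then sends $s\to\infty$ (Portmanteau), and then $t\to\infty$: the key is that $\Psi'(0+)=-\infty$ forces $\lambda/\Psi(\lambda)\to 0$ as $\lambda\downarrow 0$ (resp.\ infinite variation forces $\lambda/\Psi(\lambda)\to 0$ as $\lambda\uparrow\infty$ for (iii-a)), so the integral vanishes. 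Combined with Lemma \ref{pntconv} (a.s.\ pointwise convergence of $M_t(\{y\})$ obtained from a supermartingale argument, Lemma \ref{eemg}) and Lemma \ref{detcvvar}, this yields $\lVert M_t-\delta_\ree\rVert_{\mathrm{var}}\to 0$. Your proposal identifies the difficulty but does not supply any replacement for this two-point lemma, and without such a uniform, quantitative estimate the dyadic-refinement scheme does not close.
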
  
First observe that the theorem covers all the possible cases, except the deterministic ones that are trivial. 
On the absorption event $A\! =\!  \{ \zeta \! <\!  \infty\}$, the result is easy to explain: 
the descendent population of a single ancestor either explodes strictly before the others, or gets extinct strictly after the others, and there is an Eve in finite time.

The cases where there is no Eve -- namely, Theorem \ref{mainth} (\textit{ii-b}), (\textit{ii-c}) and (\textit{iii-b}) -- are simple to explain: the size of the descendent populations of the ancestors grow or decrease in the same (deterministic) scale and
the limiting measure is that of a normalised subordinator as specified in 
Proposition \ref{GreyLarge}, Lemma \ref{asLarge}, Proposition \ref{Greysmall}, Lemma \ref{assmall}, and also in the proof Section \ref{noevesec}. Let us mention that in Theorem \ref{mainth} (\textit{iii-b1}) and (\textit{iii-b2}), the dust of $M_\infty$ comes only from the dust of the $M_t$, $t\! \in \! (0, \infty)$: 
it is not due to limiting aggregations of atoms of the measures $M_t$ as $t\! \rightarrow \! \infty$.  

Theorem \ref{mainth} (\textit{ii-a}) and (\textit{iii-a}) are the main motivation of the paper: in these cases, the descendent populations of the ancestors grow or decrease in distinct scales and one dominates the others, which implies the Eve property in infinite time. This is the case of the Neveu branching mechanism $\Ppsi (\lambda) \! = \! \lambda \log \lambda$, that is related to the Bolthausen-Sznitman coalescent: see Bolthausen and Sznitman \cite{BolSzn98}, and Bertoin and Le Gall \cite{BerLeG00}.

\medskip

Let us first make some comments in connection with the Galton-Watson processes. The asymptotic behaviours displayed in Theorem \ref{mainth} (\textit{ii}) find their counterparts at the discrete level: the results of Seneta~\cite{Seneta68,Seneta69} and Heyde~\cite{Heyde70} implicitly entail that the Eve property is verified by a supercritical Galton-Watson process on the event of explosion iff the mean is infinite. However neither the extinction nor the dust find relevant counterparts at the discrete level so that Theorem \ref{mainth} (\textit{i}) and (\textit{iii}) are specific to the continuous setting.

CSBP present many similarities with generalised Fleming-Viot processes, see for instance the monograph of Etheridge~\cite{Eth00}: however for this class of measure-valued processes Bertoin and Le Gall~\cite{BerLeG03} proved that the population has an Eve without assumption on the parameter of the model (the measure $\Lambda$ which is the counterpart of the branching mechanism $\Psi$). We also mention that when the CSBP has an Eve, one can define a recursive sequence of Eves on which the residual populations concentrate, see~\cite{Labb�13}. Observe that this property is no longer true for generalised Fleming-Viot processes, see~\cite{Labb�11}.

\medskip

The paper is organized as follows. In Section \ref{estimsec}, we gather several basic properties and estimates on CSBP that are needed for the construction of the cluster measure done in 
Section \ref{clustersec}. These preliminary results are also 
used to provide a regular version of $M$ which is the purpose of Section \ref{constrMsec}. Section \ref{proofsec} is devoted to the proof of Theorem \ref{mainth}: in Section \ref{Greyresults} we state specific results on Grey martingales associated with CSBP in the cases where Grey martingales evolve in comparable deterministic scales: these results entail Theorem \ref{mainth} (\textit{ii-b}), (\textit{ii-c}) and (\textit{iii-b}), as explained in Section \ref{noevesec}. Section \ref{infineve}  
is devoted to the proof of Theorem \ref{mainth} (\textit{ii-a}) and (\textit{iii-a}): these cases 
are more difficult to handle and the proof is divided into several steps; in particular it relies on Lemma \ref{pntconv}, whose proof is postponed to Section \ref{pntconvpf}.  

\section{Construction of M.} 
\label{Msec}

\subsection{Preliminary estimates on CSBP.}
\label{estimsec}
Recall that we assume that $\Ppsi$ is not linear: namely, either $\beta \!>\!0$ or $\pi \! \neq \! 0$. 
The branching property (\ref{branchprop}) entails that for any $t \!\in \! (0, \infty)$, $u(t, \, \cdot \,)$ is the Laplace exponent of a subordinator. Namely, it is of the following form: 
\begin{equation}   
\label{uform}  
u(t, \lambda)= \kappa (t)+ d(t) \lambda + \int_{(0, \infty)} 
\!\!\! \nu_t (dr) \big( 1-e^{-\lambda r}\big)   \, , \quad \lambda \in [0, \infty)  ,  
\end{equation}
where $\kappa (t)\! = \! \lim_{\lambda \rightarrow 0+} u(t, \lambda)$,  
$d(t) \!\in \! [0, \infty)$ and $\int_{(0, \infty)} (1\wedge r) \, \nu_t (dr) \!< \! \infty$. Since $\Ppsi$ is not linear, we easily get $\nu_t \! \neq \!0$. As already mentioned in the introduction if 
$\Ppsi$ is conservative, $\kappa (t)\! =\! 0$ for any $t$ and if $\Ppsi$ is non-conservative, then  
$\kappa: (0, \infty) \! \longrightarrow \! (0, \gamma)$ is increasing and one-to-one. 
To avoid to distinguish these cases, we extend $\nu_t$ on $(0, \infty]$ by setting $ \nu_t (\{ \infty \}) \!:= \! \kappa (t)$. Thus, (\ref{uform}) can be rewritten as follows: 
$u(t, \lambda) \!=\!  d(t) \lambda + \int_{(0, \infty]} \nu_t (dr) \, (1-e^{-r\lambda} )$, with the usual convention $\exp(-\infty) \!=\! 0$.  Recall from (\ref{psifvar}) the definition of $D$. 
\begin{lemma}
\label{assuu} 
Let $t\! \in\! (0, \infty)$. Then $d(t) \!>\! 0 \, $ iff $\Ppsi$ is of finite variation type. In this case, $d(t)\!=\! e^{-Dt}$ where $D$ is defined in (\ref{psifvar}).
\end{lemma}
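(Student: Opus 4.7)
The plan is to work directly from the ODE $\partial_t u(t,\lambda)=-\Psi(u(t,\lambda))$ and to identify $d(t)$ with the asymptotic slope $\lim_{\lambda\to\infty} u(t,\lambda)/\lambda$. That identification is immediate from the L\'evy--Khintchine form (\ref{uform}): since $\int_{(0,\infty]}(1-e^{-\lambda r})\nu_t(dr)/\lambda \to 0$ as $\lambda\to\infty$ (dominated convergence with dominant $\min(r,1)\nu_t(dr)$) and $\kappa(t)/\lambda\to 0$, we have $d(t)=\lim_{\lambda\to\infty} u(t,\lambda)/\lambda$. Hence it suffices to compute this limit.

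To that end, I will divide the ODE by $u$ and integrate over $[0,t]$, obtaining
\begin{equation*}
\log\frac{u(t,\lambda)}{\lambda}=-\int_0^t\frac{\Psi(u(s,\lambda))}{u(s,\lambda)}\,ds.
\end{equation*}
The key analytic input is the asymptotic behaviour of $\Psi(u)/u$ as $u\to\infty$. Under (\ref{fvar}), the representation (\ref{psifvar}) combined with dominated convergence (using $\int_{(0,1)}r\,\pi(dr)<\infty$ and the finiteness of $\pi$ on $[1,\infty)$) gives $\Psi(u)/u\to D$. Under (\ref{infvar}) the same kind of elementary analysis, splitting off the quadratic coefficient $\beta$ and applying monotone convergence to $\int_{(0,1)}(r-(1-e^{-\lambda r})/\lambda)\,\pi(dr)$ (the integrand being increasing in $\lambda$), yields $\Psi(u)/u\to+\infty$.

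Finally, I will pass to the limit $\lambda\to\infty$ under the integral. The main technical lever is the monotonicity noted below (\ref{integeq}): for $\lambda>\gamma$, $s\mapsto u(s,\lambda)$ is decreasing, so $u(s,\lambda)\ge u(t,\lambda)$ uniformly in $s\in[0,t]$, which provides the required uniform control on the integrand. In the finite variation case $\Psi$ is persistent, so $u(t,\lambda)\to\infty$ with $\lambda$, and dominated convergence gives $\log(u(t,\lambda)/\lambda)\to -Dt$, i.e.\ $d(t)=e^{-Dt}>0$. In the infinite variation case, if $\Psi$ is non-persistent then $u(t,\lambda)\le v(t)<\infty$ forces $d(t)=0$ trivially; if $\Psi$ is persistent, the same monotonicity combined with $\Psi(u)/u\to\infty$ forces the integral to diverge, so $\log(u(t,\lambda)/\lambda)\to -\infty$ and again $d(t)=0$. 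The only delicate step is securing the uniform domination in the finite variation case, which the monotonicity settles; everything else is elementary.
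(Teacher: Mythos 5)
Your plan is essentially the paper's proof: identify $d(t)=\lim_{\lambda\to\infty}u(t,\lambda)/\lambda$, rewrite this ratio via $\log(u(t,\lambda)/\lambda)=-\int_0^t\Psi(u(s,\lambda))/u(s,\lambda)\,ds$, exploit the monotone limit $\Psi(u)/u\uparrow D$ (finite variation) or $\uparrow\infty$ (infinite variation), dispose of the non-persistent case by $u(t,\lambda)\le v(t)<\infty$, and pass to the limit under the integral. The paper does the last step by monotone convergence, you by dominated convergence; both are fine because the integrand $\Psi(u(s,\lambda))/u(s,\lambda)$ is increasing in $\lambda$ and, for $\lambda\ge 1$, sandwiched between the integrable function $s\mapsto\Psi(u(s,1))/u(s,1)$ and $D$.

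One small wrinkle: your ``main technical lever'' --- that $s\mapsto u(s,\lambda)$ is decreasing for $\lambda>\gamma$ --- is vacuous when $\gamma=\infty$, which occurs exactly when $-\Psi$ is the Laplace exponent of a subordinator (a finite variation case you must cover). There $u(\cdot,\lambda)$ is \emph{increasing}, but the uniform control you want is even easier: $u(s,\lambda)\ge u(0,\lambda)=\lambda\to\infty$ for all $s\ge0$, so the pointwise limit and the domination go through unchanged. With that case patched, the argument is complete.
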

\begin{proof}
First note that $d(t)\! =\!  \lim_{\lambda \rightarrow \infty} \lambda^{-1} u(t, \lambda)$. An elementary computation implies that 
\begin{equation}
\label{ratio}
 \frac{u(t, \lambda)}{\lambda} = \exp \Big( \int_0^t \partial_s \, \log u \, (s, \lambda) \, ds \Big)= 
\exp \Big( -\int_0^t \frac{\Ppsi (u(s, \lambda))}{u(s, \lambda)} \, ds \Big)   \; .
\end{equation}
If $\Ppsi$ satisfies (\ref{nonpersist}), then recall that 
$\lim_{\lambda \rightarrow \infty} u(t, \lambda)\! < \! \infty$.  
Thus, in this case, $d(t)= 0$. Next assume that $\Ppsi$ satisfies (\ref{persist}). Then, 
$\lim_{\lambda \rightarrow \infty} u(t, \lambda) \!=\! \infty$. 
Note that $\Ppsi (\lambda)/\lambda$ 
increases to $\infty$ in the infinite variation cases and that it increases to the finite 
quantity $D$ in the finite variation cases, which  implies the desired result by monotone convergence in the last member of (\ref{ratio}). \cqfd 
\end{proof}

  Recall that for any $x \! \in \! [0, \infty]$, $\bbP_x$ stands for the law on $\bbD([0, \infty) , [0, \infty])$ of a CSBP($\Ppsi, x$) and recall that $\rZ$ stands for the canonical process. 
It is easy to deduce from (\ref{branchprop}) the following monotone property: 
\begin{equation}
\label{monoZ}
 \forall t \!\in\! [0, \infty), \; \forall y \!\in\! [ 0, \infty) , \; \textrm{$\forall x, x^\prime \!\in\! [0, \infty] $ such that 
$x\leq x^\prime$}, \quad \bbP_{x} \big( \rZ_t >y \big) \leq  \bbP_{x^\prime} \big( \rZ_t >y \big). 
\end{equation}
\begin{lemma}
\label{suppZ} Assume that $\Ppsi$ is not linear. Then, for all $t, x, y \!\in\! (0, \infty)$, $\bbP_x \big( \rZ_t >y \big) >0\, $. 
\end{lemma}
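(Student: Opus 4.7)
The plan is to realise the one-dimensional marginal of $\rZ_t$ under $\bbP_x$ as the value at (deterministic) time $x$ of a subordinator, and then exploit the non-triviality of its L\'evy measure, which follows from the non-linearity of $\Ppsi$.

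First I would observe that the Laplace identity (\ref{udef}) together with the L\'evy--Khintchine form (\ref{uform}) of $u(t,\cdot)$ imply that, at fixed $t>0$, the map $x \mapsto \bbE_x[e^{-\lambda \rZ_t}] = \exp(-x u(t,\lambda))$ is the Laplace transform of a subordinator $S=(S_s)_{s\geq 0}$ on $[0,\infty]$ with drift $d(t)$ and L\'evy measure $\nu_t$ extended to $(0,\infty]$ by $\nu_t(\{\infty\}):=\kappa(t)$ (equivalently, this is just the branching property $\bbP_{x+x'} = \bbP_x \ast \bbP_{x'}$ read along the parameter $x$). Hence $\rZ_t$ under $\bbP_x$ is distributed as $S_x$. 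I would then invoke the already noted consequence of the non-linearity of $\Ppsi$: $\nu_t \neq 0$ for every $t>0$ (otherwise $u(t,\cdot)$ would be linear and the ODE $\partial_t u = -\Ppsi(u)$ would force $\Ppsi$ to be linear).

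The argument splits into two cases. If $\kappa(t)>0$, then
$$\bbP_x(\rZ_t=\infty) \;=\; \bbP(S_x = \infty) \;=\; 1 - e^{-x\kappa(t)} \;>\;0,$$
and the conclusion is immediate since $y<\infty$. Otherwise $\kappa(t)=0$ and $\nu_t$ is a non-zero L\'evy measure on $(0,\infty)$; I pick $a>0$ with $c:=\nu_t((a,\infty))>0$, noting that $c<\infty$ by the integrability condition $\int_{(0,\infty)}(1\wedge r)\,\nu_t(dr)<\infty$. The number of jumps of $S$ of size greater than $a$ occurring in the time interval $[0,x]$ is a Poisson random variable of mean $xc$, so for every integer $k\geq 1$ this count equals $k$ with strictly positive probability. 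Choosing $k$ with $ka>y$ forces $S_x \geq ka > y$ on this event, whence $\bbP_x(\rZ_t>y)>0$.

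The argument is essentially soft and the main point to be mindful of is just the correct treatment of the possible atom at $\infty$ in the extended measure $\nu_t$, which is precisely what handles the non-conservative case; no quantitative estimate on $u$ or on $\Ppsi$ is needed.
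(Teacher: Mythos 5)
Your proposal is correct and takes essentially the same route as the paper: realize $\rZ_t$ under $\bbP_x$ as the time-$x$ value of a subordinator with Laplace exponent $u(t,\cdot)$, and then exceed any level $y$ with positive probability by counting Poisson jumps of size $> r_0$ where $\nu_t((r_0,\infty))>0$. Your case-split on $\kappa(t)$ is a small extra precaution that the paper bypasses by directly asserting (just before the lemma) that non-linearity of $\Ppsi$ forces $\nu_t\neq 0$ already on $(0,\infty)$, but the two arguments are otherwise identical.
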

\begin{proof} Let $(S_x)_{x\in [0, \infty)}$ be a subordinator with Laplace exponent $u(t, \, \cdot \, )$ that is defined on an auxiliary probability space $(\Omega, \ccF, \bP)$. Thus $S_x $ under $\bP$ has the same law as $\rZ_t$ under $\bbP_x$. Since $\nu_t \! \neq \! 0$, there is $r_0\! \in\! (0, \infty)$ such that $\nu_t ((r_0, \infty))\! >\! 0$. Consequently, $N\! :=\! \# \{ z\! \in\! [0, x ] : \Delta S_z \!>\! r_0 \}$ is a Poisson 
r.v.~with non-zero mean $x \nu_t ((r_0, \infty))$. Then, for any $n$ such that $nr_0 \!>\!y$ we get 
$ \bbP_x (\rZ_t \!>\!y) \!=\! \bP (S_x \!>\!y) \! \geq\! \bP (N \geq n) \!>\!0$, which completes the proof. \cqfd 
\end{proof}
The following lemmas are used in Section \ref{clustersec} for the construction of the cluster measure. 
\begin{lemma}  
\label{flownu} Assume that $\Ppsi$ is of infinite variation type. 
Then, for any $t, s\! \in \! (0, \infty)$, 
\begin{equation}
\label{nunu}
\nu_{t+s} (dr)= \int_{(0, \infty]} \!\!\!\!\!\!  \!\!\!   \nu_s (dx) \, \bbP_x \big( \rZ_t \in dr \, ; \, \rZ_t >0  \big) \; .
\end{equation}
\end{lemma}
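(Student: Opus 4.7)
The identity is really just the semigroup (flow) relation for $u$ read off through its L\'evy--Khintchine representation, once we absorb the atom $\kappa(s)=\nu_s(\{\infty\})$ into the measure in the standard way. Because $\Psi$ is of infinite variation type, Lemma \ref{assuu} gives $d(t)=0$ for every $t>0$, hence
\[
u(t,\lambda)=\int_{(0,\infty]}\nu_t(dr)\,\bigl(1-e^{-\lambda r}\bigr),\qquad \lambda\in(0,\infty),
\]
with the convention $e^{-\lambda\cdot\infty}=0$. I would start by substituting $\lambda\leadsto u(t,\lambda)$ into the analogous identity with $s$ in place of $t$ and then use the semigroup relation \eqref{flow} to get
\[
u(t+s,\lambda)=u(s,u(t,\lambda))=\int_{(0,\infty]}\nu_s(dx)\,\bigl(1-e^{-x\,u(t,\lambda)}\bigr).
\]

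Next I would rewrite the integrand as a Laplace transform in $\lambda$. For $x\in(0,\infty)$, the definition \eqref{udef} of $u$ together with the convention above gives
\[
1-e^{-x\,u(t,\lambda)}=1-\bbE_x\bigl[e^{-\lambda \rZ_t}\bigr]=\int_{(0,\infty]}\bbP_x\bigl(\rZ_t\in dr\,;\,\rZ_t>0\bigr)\bigl(1-e^{-\lambda r}\bigr),
\]
where the mass on $\{\infty\}$ in the right-hand measure is $\bbP_x(\rZ_t=\infty)$. The same identity holds trivially for $x=\infty$ (absorbing state), with the right-hand side reducing to $\delta_\infty(dr)$. Plugging this in and using Fubini, I obtain
\[
u(t+s,\lambda)=\int_{(0,\infty]}\!\!\bigl(1-e^{-\lambda r}\bigr)\,\mu(dr),\qquad \mu(dr):=\int_{(0,\infty]}\nu_s(dx)\,\bbP_x\bigl(\rZ_t\in dr\,;\,\rZ_t>0\bigr).
\]

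Finally I would invoke uniqueness of the L\'evy measure associated with the Laplace exponent $u(t+s,\cdot)$: both $\nu_{t+s}$ and $\mu$ are Borel measures on $(0,\infty]$ that integrate $1\wedge r$ on $(0,\infty)$ (with atoms at $\infty$ allowed) and produce the same function $\lambda\mapsto u(t+s,\lambda)$ through the representation \eqref{uform} (with $d(t+s)=0$). Standard inversion of Laplace transforms for such measures forces $\nu_{t+s}=\mu$, which is exactly \eqref{nunu}.

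The only mildly delicate point is bookkeeping around the point $\infty$: one must check that the atom $\kappa(t+s)=\nu_{t+s}(\{\infty\})$ matches $\int_{(0,\infty]}\nu_s(dx)\bbP_x(\rZ_t=\infty)$, but this reduces to the identity $\kappa(t+s)=u(s,\kappa(t))$, which follows from \eqref{flow} by letting $\lambda\to 0+$ in $u(t+s,\lambda)=u(s,u(t,\lambda))$. I expect this boundary matching to be the only nontrivial verification; everything else is a direct computation.
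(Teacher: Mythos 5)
Your proposal is correct and follows essentially the same route as the paper: compute $\int_{(0,\infty]}\nu(dr)(1-e^{-\lambda r})$ for the right-hand measure $\nu$, rewrite the inner term via \eqref{udef}, invoke the flow relation \eqref{flow} to identify the result with $u(t+s,\lambda)=\int_{(0,\infty]}\nu_{t+s}(dr)(1-e^{-\lambda r})$ (using $d\equiv 0$ in the infinite variation case), and conclude by injectivity of Laplace transforms together with a separate check of the atom at $\infty$. The only cosmetic difference is that the paper disposes of the $\infty$-atom by first letting $\lambda\to 0$ and then differentiating in $\lambda$ before appealing to injectivity of Laplace transforms of \emph{finite} measures, whereas you phrase it as a general uniqueness statement plus a boundary check; both are equivalent and equally rigorous.
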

\begin{proof} Let $\nu$ be the measure on the right side of (\ref{nunu}). Then, for all $\lambda \! \in \! (0, \infty)$, (\ref{udef}) and (\ref{flow}) imply that 
$$ \int_{(0, \infty]} \!\!\!\!\!\!  \!\!\! \nu(dr) \big(1-e^{-\lambda r} \big)= \int_{(0, \infty]}  \!\!\!\!\!\! \!\!\! \nu_s 
(dx) \, \big( 1-e^{-x u(t, \lambda) }\big)=u(s, u(t, \lambda))= u(s+t, \lambda)=  
\int_{(0, \infty]} \!\!\!\!\!\! \!\!\! \nu_{t+s}(dr) \big(1-e^{-\lambda r} \big). $$
By letting $\lambda $ go to $0$, this implies that $\nu(\{ \infty \})= \nu_{t+s} (\{ \infty \})$. By differentiating in $\lambda$, we also get $\int_{(0,\infty)} \nu (dr) \, re^{-\lambda r}=  \int_{(0,\infty)} \nu_{t+s} (dr) \, re^{-\lambda r}$. Since Laplace transform of finite measures is injective, this entails that $\nu$ and $\nu_{t+s}$ coincide on $(0, \infty)$ which completes the proof. \cqfd  
\end{proof}
\begin{lemma}
\label{nufi} Assume that $\Ppsi$ is of infinite variation type. Then, for all $ \ee \! \in \! (0,1)$ 
and all $s, t \! \in \! (0, \infty) $ such that $s\!< \! t $, 
\begin{equation}
\label{nufin}
\int_{(0, \infty]} \!\!\! \!\! \! \nu_{s} (dx) \, \bbP_x (\rZ_{t-s} > \ee)= \nu_t \big( (\ee , \infty]  \big) \in (0, \infty) .  
\end{equation}
\end{lemma}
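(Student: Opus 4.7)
The plan is to recognise that the identity in (\ref{nufin}) is essentially Lemma \ref{flownu} integrated out, and that the two-sided bound $\nu_t((\ee,\infty])\in(0,\infty)$ follows from general facts about $\nu_t$ as a Lévy measure together with Lemma \ref{suppZ}.

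First I would apply Lemma \ref{flownu} with the pair $(s, t-s)$ in place of $(s,t)$, giving
\begin{equation*}
\nu_{t}(dr) \; = \; \int_{(0,\infty]} \nu_s(dx)\, \bbP_x\bigl(\rZ_{t-s}\in dr\, ;\, \rZ_{t-s}>0\bigr).
\end{equation*}
Integrating both sides over $r\in(\ee,\infty]$ and noting that for $r>\ee>0$ the constraint $r>0$ is automatic, the integrand on the right collapses to $\bbP_x(\rZ_{t-s}>\ee)$ and the identity (\ref{nufin}) follows at once. The only bit of bookkeeping is the atom at $\infty$: since $\infty$ is absorbing for $\rZ$, the contribution of $\{x=\infty\}$ on the right reads $\nu_s(\{\infty\})\cdot\bbP_\infty(\rZ_{t-s}>\ee)=\kappa(s)$, and this is precisely what Lemma \ref{flownu} already matched with the atom $\kappa(t)=\nu_t(\{\infty\})$ on the left.

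For \emph{finiteness}, I would use that the representation (\ref{uform}) forces $\int_{(0,\infty)}(1\wedge r)\,\nu_t(dr)<\infty$, whence $\nu_t((\ee,\infty))\le (1\wedge\ee)^{-1}\int(1\wedge r)\,\nu_t(dr)<\infty$; and the atom $\nu_t(\{\infty\})=\kappa(t)$ is itself finite (identically $0$ when $\Ppsi$ is conservative, and otherwise finite from $\bbP_x(\zeta_\infty>t)=e^{-x\kappa(t)}\in(0,1]$). For \emph{positivity}, I would invoke Lemma \ref{suppZ}, which asserts $\bbP_x(\rZ_{t-s}>\ee)>0$ for every $x\in(0,\infty)$, and note that $\bbP_\infty(\rZ_{t-s}>\ee)=1$. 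It therefore suffices to check that $\nu_s$ is a nontrivial measure on $(0,\infty]$. This is a consequence of the standing non-linearity of $\Ppsi$: if $\nu_s$ vanished, then by Lemma \ref{assuu} (which gives $d(s)=0$ in the infinite variation case) the Laplace exponent $u(s,\cdot)$ would reduce to the constant $\kappa(s)$, contradicting the strict monotonicity of $u(s,\cdot)$ given by the integral equation (\ref{integeq}).

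There is no real obstacle here; the whole proof is a direct corollary of the preceding two lemmas and the Lévy--Khintchine form of $u(t,\cdot)$. The only point requiring care is making sure the atom $\nu_s(\{\infty\})=\kappa(s)$ is correctly accounted for on both sides of the integrated identity, which is automatic because $\infty$ is absorbing for the CSBP.
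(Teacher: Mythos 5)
Your proof is correct and follows essentially the same route as the paper: the identity is Lemma \ref{flownu} (with time parameter $t-s$) integrated over $(\ee,\infty]$, finiteness comes from $\int(1\wedge r)\,\nu_t(dr)<\infty$, and positivity from the nontriviality of $\nu_s$ on $(0,\infty)$ together with Lemma \ref{suppZ}. Your extra remarks on the atom at $\infty$ and on deducing $\nu_s\neq 0$ from the non-linearity of $\Ppsi$ merely make explicit points the paper had already settled at the start of Section \ref{estimsec}.
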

\begin{proof} The equality follows from (\ref{nunu}). Next observe that $\nu_t \big( (\ee , \infty]  \big) \leq 
\frac{1}{\ee} \int_{(0, \infty]} (1\wedge r) \, \nu_t (dr)< \infty$. Since $\nu_s $ does not vanish on 
$(0, \infty)$, Lemma \ref{suppZ} entails that the first member is strictly positive. \cqfd  
\end{proof}
We shall need the following simple result in the construction of $M$ in Section \ref{constrMsec}. 
\begin{lemma}
\label{pathwkcont} For all $a, y \! \in (0, \infty)$, $\lim_{r\rightarrow 0+} \bbP_r ( \rZ_a \! >\! y) \!= \! 0$ and $\lim_{r\rightarrow 0+} 
\bbP_r ( \sup_{b\in [0, a]}  \rZ_b \!  >\!  y) \! = \! 0$. 
\end{lemma}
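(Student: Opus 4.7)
The first claim is a direct consequence of Markov's inequality: for any $\lambda > 0$ one has the pointwise bound $(1 - e^{-\lambda y})\un_{\{\rZ_a > y\}} \leq 1 - e^{-\lambda \rZ_a}$, so taking $\bbP_r$-expectation and invoking (\ref{udef}) yields
\[
\bbP_r(\rZ_a > y) \leq \frac{1 - e^{-r u(a, \lambda)}}{1 - e^{-\lambda y}}\, ,
\]
which tends to $0$ as $r \to 0+$ since $u(a, \lambda)$ is finite for all $a, \lambda \in (0, \infty)$.

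For the supremum, my plan is to upgrade this pointwise Laplace estimate into a maximal estimate by running the Laplace transform backwards along the trajectory of $\rZ$. Concretely, I would set $M_t := \exp(-u(a-t, \lambda) \rZ_t)$ for $t \in [0, a]$; by the Markov property combined with (\ref{udef}), $M_t$ is the $[0,1]$-valued closed martingale $\bbE_r[e^{-\lambda \rZ_a} \mid \cF_t]$, and consequently $N_t := 1 - M_t$ is a non-negative cadlag martingale bounded by $1$.

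The crucial input will be that $u_* := \inf_{s \in [0, a]} u(s, \lambda)$ is strictly positive. Using (\ref{flow}), $u(\cdot, \lambda)$ is non-decreasing when $\lambda \leq \gamma$ (giving $u_* = \lambda > 0$), and non-increasing when $\lambda > \gamma$ (giving $u_* = u(a, \lambda)$, which stays strictly above $\gamma \geq 0$ at any finite time, since $u(t, \lambda)$ tends to $\gamma$ only as $t \to \infty$). In either case $u_* > 0$, so on $\{\rZ_t > y\}$ one has $N_t \geq 1 - e^{-u_* y} =: c > 0$, whence $\{\sup_{b \in [0, a]} \rZ_b > y\} \subseteq \{\sup_{t \in [0, a]} N_t \geq c\}$. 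I would then conclude via Doob's maximal inequality applied to the non-negative martingale $N$:
\[
\bbP_r\!\Bigl(\sup_{b \in [0, a]} \rZ_b > y\Bigr) \leq \frac{\bbE_r[N_a]}{c} = \frac{1 - e^{-r u(a, \lambda)}}{c} \xrightarrow[r \to 0+]{} 0\, .
\]
No step looks like a genuine obstacle; the only point that deserves a moment's attention is the strict positivity of $u_*$, which one reads off directly from the monotone behaviour of the flow (\ref{flow}).
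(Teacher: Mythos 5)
Your argument is correct, and for the maximal bound it differs genuinely from the paper's. The first limit is treated the same way in both (the paper fixes $\lambda = 1/y$, you keep $\lambda$ free; both are Markov's inequality on the Laplace functional). For the supremum, the paper splits on $\gamma = \infty$ (where $\rZ$ is non-decreasing and the one-time estimate suffices) versus $\gamma < \infty$, where it proves the uniform estimate (\ref{pince}): for suitable $\theta, C \in (0,1)$ depending only on $a$ and $y$, one has $p_b(z,[0,\theta y]) \leq C$ for all $z \geq y$ and $b \in [0,a]$. It then stops at $T = \inf\{t : \rZ_t > y\}$ and uses the strong Markov property to get $\bbP_r\big(\sup_{[0,a]}\rZ > y\big) \leq (1-C)^{-1}\bbP_r(\rZ_a > \theta y) \to 0$. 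Your route is more compact: the Grey-type process $M_t = \exp\!\big(-u(a-t,\lambda)\rZ_t\big)$ is a closed $[0,1]$-valued martingale by the Markov property, and the key point $u_* := \inf_{s\in[0,a]} u(s,\lambda) > 0$ --- read off the monotonicity of the flow and the fact that $u(\cdot,\lambda)$ only reaches $\gamma$ in infinite time, by (\ref{integeq}) --- yields $\{\sup_{[0,a]}\rZ > y\} \subset \{\sup_{[0,a]} N \geq 1-e^{-u_* y}\}$ for $N = 1-M$, and Doob's maximal inequality for the non-negative martingale $N$ finishes in one line. Both proofs ultimately rest on the same Laplace estimate and on choosing a test exponent strictly above $\gamma$ (compare your $u_*$ with the paper's choice $\lambda = \gamma + 1$ in the proof of (\ref{pince})); yours trades the stopping-time construction and the auxiliary constants $\theta, C$ for the observation that the Grey martingale is bounded and closed, which is cleaner.
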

\begin{proof} First note that  $\bbP_r ( \rZ_a \! >\! y) \leq (1\! -\! e^{-1})^{-1} \bbE_r [1\!-\! e^{- \rZ_a/y}]=   (1\! -\! e^{-1})^{-1} (1-e^{-ru(a, 1/y)}) \rightarrow 0$ as $r\rightarrow 0$, which implies the first limit. 
Let us prove the second limit: if $\gamma \!= \!\infty$, then $\rZ$ is non-decreasing and the second limit is derived from the first one. We next assume that $\gamma \! < \! \infty$, and we claim that there exist $\theta , C \! \in \!(0, 1)$ that only depend on $a$ and $y$ such that
\begin{equation} \label{pince}
 \forall z \in [y, \infty), \; \forall b \in [0, a ] , \quad p_b(z, [0, \theta y]) \leq C \; .
\end{equation}
Let us prove (\ref{pince}). 
We specify $\theta \! \in\!  (0, 1)$ further. 
By (\ref{monoZ}), $ p_b(z, [0, \theta y])\leq p_b (y , [0,\theta y] )= \bbP_y (\rZ_b \! \leq \! \theta y)$. By an elementary inequality, for all $\lambda \! \in \! (0, \infty)$, 
$\bbP_y (\rZ_b \! \leq \! \theta y) \! \leq \! \exp(y \theta  \lambda) 
\bbE_y [ \exp(\! -\! \lambda \rZ_b)] \! =\! 
\exp (y\theta  \lambda \! -\! yu(b, \lambda))$. We take $\lambda \! = \! \gamma \! +\! 1$. Thus, 
$u(\cdot , \gamma \! + \! 1)$ is decreasing and $p_b(z, [0, \theta y]) \! \leq\!  \exp ( y \theta (\gamma \! +\! 1) 
\! -\! yu(a, \gamma \!+\!1))$. We choose $\theta \! = \! \frac{u(a, \gamma +1)}{2(\gamma +1)}$. Then, (\ref{pince}) holds true with $C\! =\!  \exp (\!- \! y\theta (\gamma +1) )$. 

\smallskip

We next set $T\! = \! \inf \{ t \! \in [0, \infty) : \rZ_t \!> \! y\}$, with the convention $\inf \emptyset \! = \! \infty$. Thus $\{  \sup_{b\in [0, a]}  \rZ_b \!  >\!  y\} = \{ T\! \leq \! a \}$. Let $\theta$ and $C$ as in (\ref{pince}). First note that $ \bbP_r (T \! \leq \! a ) \! \leq \! \bbP_r ( \rZ_a \! > \! \theta y) \! +\!  \bbP_r ( T\! \leq\!  a ; 
\rZ_a \! \leq \! \theta y )$. Then, by the strong Markov property at $T$ and (\ref{pince}), we get 
$$ \bbP_r ( T\! \leq\!  a \, ; \,  
\rZ_a \! \leq \! \theta y )= \bbE_r [\un_{\{ T\leq a \} } \,  p_{a-T} (Z_T, [0, \theta y])  ] \leq C\,  \bbP_r (T\leq a )\; . $$
Thus, $\bbP_r ( \sup_{b\in [0, a]}  \rZ_b \!  >\!  y) \! \leq \! (1\! -\! C)^{-1} \bbP_r (\rZ_a \! > \! \theta y) \rightarrow 0 $ as $r \rightarrow 0$, which completes the proof.  \cqfd 
\end{proof}
We next state a more precise inequality that is used in the construction of the cluster measure of CSBP.
\begin{lemma}
\label{techZ} We assume that $\Ppsi$ is not linear. Then, for any $\ee, \eta \! \in \! (0, 1)$ 
and for any $t_0 \! \in \! (0, \infty)$, there exists $a \!\in \! (0, t_0/4)$ such that 
\begin{equation}   
\label{ineqZZ}
\hspace{-0mm} \forall x \!\in \! [0, \eta], \; \forall b \!\in\! [0, a], \; \forall c \!\in\! [ \frac{_{_1}}{^{^2}} t_0, t_0] , \quad 
 \bbP_x \Big( \!\!\! \!  \sup_{^{\; \; \; \, t\in [0, b]}} \!\! \!  \rZ_t  > 2 \eta \, ; \, \rZ_c > \ee \Big)
\leq 
2 \, \bbP_x \big( \rZ_b > \eta \, ; \, \rZ_c > \ee   \big).
\end{equation}
\end{lemma}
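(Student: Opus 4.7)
The idea is to use the strong Markov property at the first passage time above $2\eta$, then compare the two relative weights of the trajectory at time $b$ (whether or not it has returned below $\eta$). Set
\[
T:=\inf\{t\geq 0:\rZ_t>2\eta\},
\]
so that under $\bbP_x$ with $x\leq \eta<2\eta$ we have $T>0$, and $\{\sup_{[0,b]}\rZ_t>2\eta\}\subset\{T\leq b\}$ with $\rZ_T\geq 2\eta$ on $\{T<\infty\}$ by right-continuity. Write
\[
\bbP_x\big(\sup_{[0,b]}\rZ_t>2\eta,\rZ_c>\ee\big)=I+J,
\]
where $I:=\bbP_x(T\leq b,\rZ_b\leq \eta,\rZ_c>\ee)$ and $J:=\bbP_x(T\leq b,\rZ_b>\eta,\rZ_c>\ee)$. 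Since $J\leq \bbP_x(\rZ_b>\eta,\rZ_c>\ee)$, the lemma reduces to proving $I\leq J$.

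\textbf{Strong Markov reduction.} By the strong Markov property at $T$,
\[
I=\bbE_x\!\left[\un_{\{T\leq b\}}F(T,\rZ_T)\right],\qquad J=\bbE_x\!\left[\un_{\{T\leq b\}}G(T,\rZ_T)\right],
\]
where for $s\in[0,b]$ and $z\geq 2\eta$,
\[
F(s,z):=\bbP_z(\rZ_{b-s}\leq\eta,\rZ_{c-s}>\ee),\qquad G(s,z):=\bbP_z(\rZ_{b-s}>\eta,\rZ_{c-s}>\ee).
\]
It therefore suffices to show $F(s,z)\leq G(s,z)$ pointwise on this domain.

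\textbf{Markov at $b-s$ and monotonicity.} Conditioning at time $b-s$ and using the monotonicity property (\ref{monoZ}) of the map $y\mapsto \bbP_y(\rZ_{c-b}>\ee)$,
\[
F(s,z)\leq \bbP_\eta(\rZ_{c-b}>\ee)\cdot\bbP_z(\rZ_{b-s}\leq \eta),\qquad
G(s,z)\geq \bbP_\eta(\rZ_{c-b}>\ee)\cdot\bbP_z(\rZ_{b-s}>\eta).
\]
Since $a<t_0/4$ entails $c-b\geq t_0/4>0$, Lemma~\ref{suppZ} yields $\bbP_\eta(\rZ_{c-b}>\ee)>0$, so we may divide. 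Hence $F(s,z)\leq G(s,z)$ will follow as soon as
\[
\bbP_z(\rZ_{b-s}\leq \eta)\leq \tfrac{1}{2},\qquad z\geq 2\eta,\;\; b-s\in[0,a].
\]

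\textbf{Uniform exponential estimate.} By the Chebyshev exponential bound and (\ref{udef}),
\[
\bbP_z(\rZ_{b-s}\leq \eta)\leq e^{\lambda\eta}\,\bbE_z\!\left[e^{-\lambda \rZ_{b-s}}\right]=e^{\lambda\eta-zu(b-s,\lambda)}\leq e^{\eta(\lambda-2u(b-s,\lambda))}
\]
for every $\lambda>0$, using $z\geq 2\eta$ in the last step. By (\ref{flow}), $u(t,\lambda)\to\lambda$ as $t\to 0$ for any fixed $\lambda$. Thus, first pick $\lambda$ so large that $e^{-\lambda\eta/2}\leq 1/2$, then pick $a\in(0,t_0/4)$ small enough that $u(t,\lambda)\geq \tfrac{3}{4}\lambda$ on $[0,a]$; this gives the required $\leq 1/2$ bound uniformly in $z\geq 2\eta$ and $b-s\in[0,a]$, and completes the proof.

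\textbf{Main obstacle.} The only delicate point is getting an estimate on $\bbP_z(\rZ_{b-s}\leq \eta)$ that is uniform in $z\geq 2\eta$ (including $z$ arbitrarily large and $z=\infty$). The exponential Chebyshev bound exploits that $e^{-zu}$ is monotone in $z$, so the worst case is $z=2\eta$, and the freedom to choose $\lambda$ large compensates for $\eta$ being possibly small.
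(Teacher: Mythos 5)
Your proof is correct, and while it relies on the same core ingredients as the paper's argument (strong Markov at the first passage time $T=\inf\{t:\rZ_t>2\eta\}$, Markov at time $b$, monotonicity (\ref{monoZ}), and an exponential Chebyshev bound), it is organized along a genuinely cleaner route. The paper decomposes $A:=\bbP_x(\sup_{[0,b]}\rZ>2\eta,\rZ_c>\ee)$ as $\bbP_x(\rZ_b>\eta,\rZ_c>\ee)+B$ and closes the argument by proving $B\leq\tfrac12 A$ through a chain of inequalities: it first shows the auxiliary positivity statement (\ref{posZ}), then uses it to fix a constant $\delta$ defined as a ratio of $\inf_{t\in[t_0/4,t_0]}\bbP_{2\eta}(\rZ_t>\ee)$ over $\sup_{t\in[t_0/4,t_0]}\bbP_\eta(\rZ_t>\ee)$, bounds $B$ by $\delta\,\bbP_\eta(\rZ_{c-b}>\ee)\bbP_x(T\leq b)$, and then re-injects the event $\{\rZ_c>\ee\}$ by comparing back against $\inf_t\bbP_{2\eta}(\rZ_t>\ee)\leq p_{c-T}(\rZ_T,(\ee,\infty])$. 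By instead applying the strong Markov property at $T$ to both $I$ and $J$ and establishing the pointwise comparison $F(s,z)\leq G(s,z)$ via the common factor $\bbP_\eta(\rZ_{c-b}>\ee)$ and the uniform bound $\bbP_z(\rZ_{b-s}\leq\eta)\leq\tfrac12$, you never lose the event $\{\rZ_c>\ee\}$, so (\ref{posZ}) and the whole $\delta$-construction become unnecessary (the parenthetical about "dividing" is in fact superfluous: if $\bbP_\eta(\rZ_{c-b}>\ee)=0$ then $F\equiv 0\leq G$ anyway). Finally, your exponential estimate uses only $u(t,\lambda)\to\lambda$ as $t\to 0$ (from (\ref{flow})) and holds uniformly in $z\geq 2\eta$, including $z=\infty$; this avoids the paper's separate treatment of $\gamma=\infty$ and the use of $\lambda>\gamma$ to get monotonicity of $u(\cdot,\lambda)$. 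The net effect is a shorter argument that dispenses with two sub-lemmas while reaching exactly the same conclusion.
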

\begin{proof} 
Since $\Ppsi$ is not linear, $\nu_t \neq 0$. If $\gamma \! =\! \infty$, the corresponding CSBP has increasing sample paths and the lemma obviously holds true. So we assume that $\gamma \! < \! \infty$. 
We first claim the following. 
\begin{equation}
\label{posZ}
\textrm{$\forall \, x, y, t_0, t_1 \!\in \! (0, \infty)$ with $t_1\leq t_0$}, \qquad \inf_{^{t\in [t_1, t_0]}} \! 
\bbP_x \big( \rZ_t > y\big) \; >  0 .
\end{equation}
\noindent
Let us prove (\ref{posZ}). Suppose that there is a sequence $s_n \! \in \! [t_1, t_0]$ such that 
$\lim_{n \rightarrow \infty}\bbP_x (\rZ_{s_n } \!>\! y) \!= \!0$. Without loss of generality, we can assume that 
$\lim_{n \rightarrow \infty} s_n \!= \!t $. Since $u(\, \cdot \, , \lambda)$ is continuous, 
$\rZ_{s_n} \rightarrow \rZ_t $ in law under $\bbP_x$ and the Portmanteau Theorem implies that 
$\bbP_x (\rZ_t \!>\!y) \leq \liminf_{n \rightarrow \infty} \bbP_x (\rZ_{s_n} \!>\!y)= 0$, which contradicts Lemma \ref{suppZ} since $t >0$.
  
\smallskip 
  
We next claim the following: for any $\eta, \delta \!\in\! (0, 1)$, there exists $a \!\in\! (0, \infty)$ such that 
\begin{equation}  
\label{unifZ}
\forall x \! \in \!  [2\eta , \infty ) , \; \forall s \! \in \!  [0, a ] , \qquad \bbP_x \big( \rZ_s \leq \eta \big) \leq \delta .   
\end{equation}
\noindent
Let us prove (\ref{unifZ}). We fix $x\in [2\eta , \infty)$. Let $a \!\in \!(0, \infty)$ that is specified later. For any $s \!\in\! [0, a]$, the Markov inequality entails for any $\lambda \!\in\! (0, \infty)$
\begin{equation}
\label{Mrkv}
 \bbP_x (\rZ_s \leq \eta)\leq e^{\lambda \eta } \bbE_x \big[ e^{-\lambda \rZ_s}\big]= 
e^{\lambda \eta -xu(s, \lambda)  }
\leq 
e^{- \lambda \eta + 2 \eta (\lambda \!- \!u(s, \lambda)) }\; .
\end{equation}
We now take $\lambda \! >\! \gamma$. Then, $u(\, \cdot \, , \lambda)$ is decreasing and we get 
\begin{equation}
\label{uspec}
  -\lambda \eta + 2 \eta \big(\lambda \!- \! u(s, \lambda)\big) \leq  -\lambda \eta + 2 \eta \! \int_0^s \!\! \Ppsi (u(b, \lambda)) \, d b \leq -\lambda \eta + 2  \eta \, a  \Ppsi (\lambda) .
\end{equation}  
Then set $\lambda= \gamma +1\!-\! \eta^{-1} \log \delta$ and $a= (\gamma +1) /(2 \Ppsi (\lambda))$, which entails (\ref{unifZ}) by (\ref{uspec}) and (\ref{Mrkv}).

\smallskip

We now complete the proof of the lemma. We first fix $\ee, \eta \! \in \! (0, 1) $ and $t_0 \! \in\!  (0, \infty)$ and then we set
$$ \delta \, = \, \frac{1}{2} \, \frac{\, \inf_{^{t\in [\frac{_1}{^4}t_0, t_0]}}  
\bbP_{2\eta} \big( \rZ_t > \ee\big)\,  }{\, \sup_{^{t\in [\frac{_1}{^4}t_0, t_0]}}  \! 
\bbP_{\eta} \big( \rZ_t > \ee \big) \, } \; \, .$$
By (\ref{posZ}), $\delta \!>\!0$. Let $a \! \in \! (0, \frac{_{_1}}{^{^4}} t_0)$ be such that (\ref{unifZ}) holds true. We then fix $x \!\in \! [0, \eta]$, $b\! \in \! [0, a]$ and $c\!\in \! [\frac{_{_1}}{^{^2}} t_0 , t_0]$ and we introduce the stopping time $T= \inf \{ t \!\in \! [0, \infty) : \rZ_t > 2\eta \}$. Then, 
\begin{equation}     
\label{Adef}
A:= \bbP_x \Big( \!\!\! \!  \sup_{^{\; \; \; \, s\in [0, b]}} \!\! \!  \rZ_t  > 2 \eta \, ; \, \rZ_c > \ee \Big)= \bbP_x 
\big( T \leq b \, ; \, \rZ_c \! >\! \ee\big) \leq \bbP_x \big(\rZ_b \! >\, \eta \, ; \, \rZ_c \! > \! \ee  \big) + B \; , 
\end{equation}
where $B:=  \bbP_x \big( T \leq b \, ; \, \rZ_b \leq \eta \, ; \, \rZ_c \! >\! \ee\big)$ is bounded as follows: by the Markov property at time $b$ and by (\ref{monoZ}), we first get 
$$ B \leq \bbE_x \big[ \un_{\{ T \leq b \, ; \, \rZ_b \leq \eta  \}} \,  \bbP_{\rZ_b}( \rZ_{c-b} \! >\! \ee   ) \big] 
\leq \bbP_{\eta} ( \rZ_{c-b} \! >\! \ee ) \, \bbE_x \big[ \un_{\{ T \leq b \, ; \, \rZ_b \leq \eta  \}} \big]. $$
Recall that $p_t (x,  dy  )\! = \! \bbP_x (\rZ_t \!\in\! dy)$ stands for the transition kernels of $\rZ$. The strong Markov property at time $T$ then entails   
$$ \bbE_x \big[ \un_{\{ T \leq b \, ; \, \rZ_b \leq \eta  \}} \big]= \bbE_x \big[ \un_{\{ T \leq b \}} \, p_{ b-T} \big( \rZ_T , [0, \eta ]\big) \big] .$$
Next observe that $\bbP_x$-a.s.~$b\!-\!T \!\leq \!a$ and $\rZ_T \!> \!2 \eta$, which implies 
$p_{b-T} \big( \rZ_T , [0, \eta ]\big) \leq \delta $ by (\ref{unifZ}). Thus, 
$$ B  \leq \delta \, \bbP_{\eta} ( \rZ_{c-b} \! >\! \ee ) \, \bbE_x \big[ \un_{\{ T \leq b \}} \big] \; .$$
Since $c\! -\! b \! \in \![\frac{_1}{^4}t_0, t_0]$, we get 
$\delta \, \bbP_{\eta} ( \rZ_{c-b} \! >\! \ee ) \leq 
\frac{1}{2} \inf_{{t\in [\frac{_1}{^4}t_0, t_0]}}  \bbP_{2\eta} \big( \rZ_t  \!>\!  \ee \big)$, by definition of $\delta$. Next, observe that 
$$ \textrm{$\bbP_x$-a.s.~on $\{ T\leq b\}$}, \quad  \inf_{^{t\in [\frac{_1}{^4}t_0, t_0]}}  \bbP_{2\eta} \big( \rZ_t  \!>\!  \ee \big) \leq p_{c-T} \big( 2\eta , (\ee , \infty] \big)  \leq p_{c-T} \big( \rZ_T , (\ee , \infty] \big) , $$
where we use (\ref{monoZ}) in the last inequality. Thus, by the strong Markov property at time $T$ and the previous inequalities, we finally get 
$$ B \leq  \frac{_1}{^2} \bbE_x \big[ \un_{\{ T \leq b \}}\, p_{c-T} \big( \rZ_T , (\ee , \infty] \big) \big] =  \frac{_1}{^2} \bbP_x 
\big( T \leq b \, ; \, \rZ_c \! >\! \ee\big)= \frac{_1}{^2} A \; , $$
which implies  the desired result by (\ref{Adef}). \cqfd

\end{proof}

We end the section by a coupling of finite variation CSBP. To that end,   
let us briefly recall that CSBP are time-changed L\'evy processes via Lamperti transform: 
let $X=(X_t)_{t\in [0, \infty)}$ be a cadlag L\'evy process without negative jump that is 
defined on the probability space 
$(\Omega, \cF, \bP)$. We assume that $X_0\!=\!x \!\in \! (0, \infty)$ and that 
$\bE[\exp (-\lambda X_t)] \!=\! \exp (-x\lambda + t\Ppsi (\lambda))$. We then set 
\begin{equation}\label{Lampe} 
\tau= \inf \big\{ t\! \in\!  [0, \infty): X_t\! =\! 0 \big\}, \quad L_t = \tau \wedge \inf \Big\{ s\!\in\! [0, \tau): \int^s_0\! \frac{dr}{X_r} > t  \Big\}
\quad \textrm{and} \quad Z_t= X_{ L_t },   
\end{equation}  
with the conventions $\inf \emptyset \!= \!\infty$ and $X_\infty\!=\! \infty$. Then, $(Z_t)_{t\in [0, \infty)}$ is a CSBP($\Ppsi, x$). See \cite{CabLamUri09} for more details. 
Recall from (\ref{psifvar}) the definition of $D$. 
\begin{lemma}
\label{supZfinvar} Assume that $\Ppsi$ is of 
finite variation type and that $D$ is strictly positive.  
Let $(Z_t)_{t\in [0, \infty)}$ be a CSBP$(\Ppsi, x)$ defined on a probability space $(\Omega, \cF, \bP)$. 
For any $\lambda \in [0, \infty)$, set $\Ppsi^*(\lambda):= \Psi (\lambda)-D\lambda$.
Then, there exists $(Z^*_t)_{t\in [0, \infty)}$, a CSBP$(\Ppsi^*, x)$ on $(\Omega, \cF, \bP)$ such that 
$$ \textrm{$\bP$-a.s.} \quad \forall t \in [0, \infty), \quad \sup_{s\in [0, t]} Z_t  \leq Z^*_t \; .$$
\end{lemma}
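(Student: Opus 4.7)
The plan is to couple $Z$ and $Z^*$ on the same probability space through a common underlying subordinator, using the Lamperti transform recalled in (\ref{Lampe}). Since $\Psi$ is of finite variation type, (\ref{psifvar}) gives $\Psi(\lambda) = D\lambda - \int_{(0,\infty)} (1 - e^{-\lambda r}) \pi(dr)$, so $-\Psi^*(\lambda) = \int_{(0,\infty)} (1 - e^{-\lambda r}) \pi(dr)$ is the Laplace exponent of a subordinator with L\'evy measure $\pi$. I would first realise on $(\Omega, \cF, \bP)$ a spectrally positive L\'evy process $X$ with $X_0 = x$ and Laplace exponent $\Psi$; since $\Psi$ is of finite variation type, $X$ decomposes as $X_t = x - Dt + S_t$, where $S = (S_t)_{t \geq 0}$ is a pure-jump subordinator with L\'evy measure $\pi$. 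Applying (\ref{Lampe}) to $X$ produces the given CSBP$(\Psi, x)$: $Z_t = X_{L_t}$.

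I would then set $X^*_t := x + S_t = X_t + Dt$. This is a non-decreasing L\'evy process with $X^*_0 = x$ and Laplace exponent $\Psi^*$, and it is strictly positive throughout $[0,\infty)$, so its absorption time at $0$ is infinite. Applying the Lamperti transform (\ref{Lampe}) to $X^*$ yields a CSBP$(\Psi^*, x)$: $Z^*_t := X^*_{L^*_t}$, with $L^*_t = \inf\bigl\{s \geq 0 :\int_0^s du/X^*_u > t \bigr\}$. By construction, $Z$ and $Z^*$ live on the same probability space and share the jump structure of $S$.

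It remains to compare the two clocks and assemble the inequality. Pathwise one has $X^*_s \geq X_s > 0$ on $[0, \tau)$, whence $1/X^*_u \leq 1/X_u$ and $\int_0^s du/X^*_u \leq \int_0^s du/X_u$ for every $s \in [0,\tau)$; comparing the infima defining $L_t$ and $L^*_t$ gives $L^*_t \geq L_t$ for every $t \in [0,\infty)$ (if $Z$ is absorbed at $0$ before time $t$, then the desired bound $Z_s = 0 \leq Z^*_t$ on $[\zeta_0, t]$ is trivial). Since the map $s \mapsto L_s$ is continuous non-decreasing with range $[0,L_t]$, and since $X^*$ is non-decreasing,
\begin{equation*}
 \sup_{s \in [0,t]} Z_s \;=\; \sup_{s \in [0,L_t]} X_s \;\leq\; \sup_{s \in [0,L_t]} X^*_s \;=\; X^*_{L_t} \;\leq\; X^*_{L^*_t} \;=\; Z^*_t\, ,
\end{equation*}
which is the claim. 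I do not anticipate a real obstacle: the only care needed is in handling the absorption time $\tau$ of $X$ and the potential explosion of $Z^*$ in finite time (the latter is harmless, as we only seek an upper bound, and by convention $Z^*_t = \infty$ past blow-up).
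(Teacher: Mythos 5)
Your proposal is correct and takes essentially the same route as the paper: couple $Z$ and $Z^*$ through the Lamperti transform applied to $X$ and $X^*_t = X_t + Dt$, observe $X^* \geq X$ and hence $L^* \geq L$, and conclude by monotonicity of $X^*$. The only cosmetic difference is the organisation of the final chain of inequalities (you pass directly through $\sup_{u\in[0,L_t]}X^*_u$, whereas the paper first establishes $Z_t\leq Z^*_t$ and then invokes monotonicity of $Z^*$), but the argument is identical in substance.
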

\begin{proof} Without loss of generality, we assume that there exists a L\'evy process $X$ defined on $(\Omega, \cF, \bP)$ such that $Z$ is derived from $X$ by the Lamperti time-change (\ref{Lampe}). 
We then set $X^*_t \!=\! X_t\! +\!Dt$ that is a subordinator with Laplace exponent $-\Ppsi^*$ and with initial value $x$. Since $D$ is positive, we have $X_t \leq X^*_t$ for all $t \!\in\! [0, \infty)$. Observe that $\tau^*\!=\!\infty$. Let $L^*$ and $Z^*$ be derived from $X^*$ as $L$ and $Z$ are derived from $X$ in (\ref{Lampe}). Then, $Z^*$ is a 
CSBP($\Ppsi^*, x$) and observe that $L^*_t \! \geq \!L_t$. Since $X^*$ is non-decreasing, $Z^*_t \!=\! 
X^*_{L^*_t} \! \geq \! X^*_{L_t}\! \geq \!X_{L_t}\!=\! Z_t$, which easily implies the desired result since $Z^*$ is non-decreasing. \cqfd
\end{proof}

\subsection{The cluster measure of CSBP with infinite variation.}
\label{clustersec}
Recall that $\bbD([0, \infty) , [0, \infty])$ stands for the space of $[0, \infty]$-valued cadlag functions. Recall that $\rZ$ stands for the canonical process. For any $t \! \in\! [0, \infty)$, we denote by $\ccF_t$ the canonical filtration. Recall from (\ref{absordef}) the definition of the times of absorption 
$\zeta_0$, $\zeta_\infty$ and $\zeta$. Also recall from the beginning of Section \ref{estimsec} the definition of the measure $\nu_t$ on $(0, \infty]$.  
\begin{theorem}
\label{cluster} Let $\Ppsi$ be of infinite variation type. Then, there exists a unique $\sigma$-finite measure 
$\rN_\Ppsi$ on $\bbD([0, \infty) , [0, \infty])$ that satisfies the following properties. 
\begin{enumerate}
\item[{\rm (a)}] $\rN_\Ppsi$-a.e.~$\rZ_0= 0$ and $\zeta >0$.     

\item[{\rm (b)}] $\nu_t (dr) = \rN_\Ppsi \big( \rZ_t \! \in \! dr \, ; \, \rZ_t >0 \big)$, for any $t\in (0, \infty)$.   

\item[{\rm (c)}] $ \rN_\Ppsi \big[ F(\rZ_{\, \cdot \, \wedge t } )\, G( \rZ_{t + \, \cdot \, } ) \,; \,   \rZ_{t} >0 \, 
\big]=  
\rN_\Ppsi \big[ F(\rZ_{\, \cdot \, \wedge t } ) \, \bbE_{\rZ_t} [ \, G \, ] 
\,; \,   \rZ_t >0 \, \big] $, for any nonnegative functionals $F, G$ and for any $t \in (0, \infty)$.  
\end{enumerate}
The measure $\rN_\Ppsi$ is called the cluster measure of CSBP$(\Ppsi)$. 
\end{theorem}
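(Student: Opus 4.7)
My approach is to build $\rN_\Ppsi$ as a $\sigma$-finite measure by prescribing a consistent family of finite-dimensional marginals, exhibit a cadlag modification on $\bbD([0, \infty), [0, \infty])$, and then read off the properties (a)--(c) together with uniqueness. For $0 < t_1 < \cdots < t_n$, I define on $[0, \infty]^n$ the $\sigma$-finite measure
$$Q_{t_1, \ldots, t_n}(dx_1 \cdots dx_n) = \nu_{t_1}(dx_1)\, p_{t_2 - t_1}(x_1, dx_2)\cdots p_{t_n - t_{n-1}}(x_{n-1}, dx_n),$$
where $p_t(x, dy) = \bbP_x(\rZ_t \in dy)$ is the CSBP transition kernel extended to $[0, \infty]$. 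Consistency when inserting a time between two existing ones is the Chapman--Kolmogorov identity, immediate from the flow property (\ref{flow}). Consistency when inserting $t^* < t_1$ is exactly Lemma \ref{flownu}: $\int_{(0, \infty]} \nu_{t^*}(dx^*)\, p_{t_1 - t^*}(x^*, \cdot) = \nu_{t_1}$ on $(0, \infty]$. Since Lemma \ref{nufi} gives $\nu_{t_1}((\ee, \infty]) < \infty$ for every $\ee > 0$, each $Q_{t_1, \ldots, t_n}$ restricted to $\{x_1 > \ee\}$ is finite, and a standard Kolmogorov extension applied on each event $\{\rZ_{t_0} > \ee\}$ followed by monotone pasting as $(t_0, \ee) \downarrow (0, 0)$ produces a $\sigma$-finite measure $\rN_\Ppsi$ on the product $\sigma$-algebra of $[0, \infty]^{(0, \infty)}$.

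\emph{Cadlag modification --- the main obstacle.} Realizing $\rN_\Ppsi$ on the Skorohod space is the delicate step, since standard tightness criteria apply only to finite measures. On each finite piece $\rN_\Ppsi(\,\cdot\,; \rZ_{t_0} > \ee)$, I would use Lemma \ref{techZ} to control path oscillations: writing $\rZ_{s+\cdot}$ conditionally on $\rZ_s$ via the Markov property and splitting the integral $\int \nu_s(dx)$ into $\{x \leq \eta\}$ versus $\{x > \eta\}$, the first piece is handled by the estimate (\ref{ineqZZ}) while the second is finite by Lemma \ref{nufi}. This yields an Aldous-type oscillation bound on every compact subinterval of $(0, \infty)$, from which the usual regularization procedure extracts a cadlag version. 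The behaviour at $t = 0$ is then settled by Lemma \ref{pathwkcont}: since $\bbP_r(\sup_{[0, a]} \rZ_s > y) \to 0$ as $r \downarrow 0$, integrating against $\nu_s$ for small $s$ (after using the Markov property to split off an initial segment) forces $\rZ_t \to 0$, $\rN_\Ppsi$-a.e.~on each finite piece.

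\emph{Properties and uniqueness.} Property (b) is built into the construction since $Q_{\{t\}}$ restricted to $(0, \infty]$ is $\nu_t$. For (a), $\rZ_0 = 0$ was just established, and $\zeta > 0$ $\rN_\Ppsi$-a.e.~because the support of $\rN_\Ppsi$ lies in $\bigcup_{t_0, \ee > 0}\{\rZ_{t_0} > \ee\}$ by construction (the complement, i.e.~paths identically $0$, carries no mass). Property (c) is directly encoded in the finite-dimensional marginals via the kernels $p_t$ and extends to general nonnegative measurable $F, G$ by a monotone class argument. For uniqueness, any $\sigma$-finite $\rN'_\Ppsi$ satisfying (a)--(c) has one-dimensional marginal on $\{\rZ_t > 0\}$ equal to $\nu_t$ by (b), and iterating (c) pins down all finite-dimensional marginals as the $Q_{t_1, \ldots, t_n}$; since both measures are supported on $\{\zeta > 0\}$ by (a), which is exhausted by the countable family of finite pieces $\{\rZ_{t_0} > \ee\}$, they coincide. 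The main conceptual hurdle throughout is the cadlag regularity, which relies crucially on the quantitative estimate of Lemma \ref{techZ} combined with the entrance-law control provided by Lemmas \ref{flownu}--\ref{nufi}.
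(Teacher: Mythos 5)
Your proposal is correct in spirit and uses the same key technical ingredient as the paper (Lemma \ref{techZ}), but it takes a genuinely different route. You work on the product space $[0,\infty]^{(0,\infty)}$ first, invoking a Kolmogorov extension on each finite piece $\{\rZ_{t_0} > \ee\}$, and only afterwards extract a cadlag modification. The paper instead never leaves the Skorohod space: it defines honest probability measures $Q^s_{t,\ee}$ on $\bbD([0,\infty),[0,\infty])$ (the law of a CSBP started at time $s$ from the normalised entrance law $\nu_s$, held constant on $[0,s]$, conditioned on $\{\rZ_{t-s}>\ee\}$), proves tightness of these laws as $s\to 0$ using Lemma \ref{techZ}, identifies the limit $Q_{t,\ee}$ via its finite-dimensional marginals and the Portmanteau theorem, and then pastes the resulting finite measures $\rN_{t,\ee}=\nu_t((\ee,\infty])\,Q_{t,\ee}$ over $\ee$ and then over $t$. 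The paper's route buys you path regularity for free (a weak limit of laws on $\bbD$ lives on $\bbD$), at the cost of a tightness argument, while your route trades the tightness argument for the regularization step you would still have to carry out after Kolmogorov extension; both ultimately hinge on (\ref{ineqZZ}).

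Two small imprecisions in your proposal worth flagging. First, your consistency claim ``$\int_{(0,\infty]}\nu_{t^*}(dx^*)\,p_{t_1-t^*}(x^*,\cdot)=\nu_{t_1}$ on $(0,\infty]$'' is correct but is not full Kolmogorov consistency: the left-hand side also carries mass at $\{0\}$ (paths absorbed between $t^*$ and $t_1$), so the family $\{Q_{t_1,\ldots,t_n}\}$ with the smallest time unconstrained is not projectively consistent. Your remark about applying Kolmogorov ``on each event $\{\rZ_{t_0}>\ee\}$'' with $t_0$ fixed is exactly what repairs this, but as written the earlier consistency assertion overstates what Lemma \ref{flownu} gives. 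Second, the phrase ``Aldous-type oscillation bound $\ldots$ from which the usual regularization procedure extracts a cadlag version'' compresses a nontrivial step: Aldous-type criteria are designed for weak convergence on $\bbD$, not for regularizing a process constructed on a product space, so you would need to spell out a genuine modification argument (e.g.~showing that along dyadic times the oscillation tails are summable). The paper sidesteps this entirely by staying on $\bbD$ throughout.
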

\begin{comment}
\label{biblioN} The existence of $\rN_\Ppsi$ - sometimes called Kuznetsov measure, see~\cite{Kuz73} - is not really new: for sub-critical $\Ppsi$, $\rN_\Ppsi$ can be derived from the excursion measure of the height process of the L\'evy trees and the corresponding 
super-processes as introduced in \cite{DuqLeG02}. See also Dynkin and Kuznetsov~\cite{DynKuz04} for a different approach on super-processes. We also point out articles of Li~\cite{Li01,LiBook,LiNotes12} on the construction of this measure when $\Ppsi'(0+) \ne -\infty$. Here, we provide a brief and self-contained proof of the existence of the cluster measure for CSBP that works in all cases. \cq 
\end{comment}
\begin{proof} The only technical point to clear is (a): namely, the right-continuity at time $0$.
For any $s, t \! \in \! (0, \infty)$ such that $s\!\leq \!t $ and for any $\ee \! \in \!(0, 1)$, we define a measure $Q_{t, \ee}^s $ on $\bbD([0, \infty) , [0, \infty])$ by setting 
\begin{equation}
\label{Qdef}
Q_{t, \ee}^s [\, F \, ] = \frac{1}{\nu_t ((\ee, \infty])} \int_{(0, \infty]} \!\!\!\!\! \! \! \, \nu_s (dx) \, \, \bbE_x 
\big[  F( \rZ_{(\, \cdot\,  -s)_+} ) \, ; \, \rZ_{t-s} \!> \!\ee \, \big]  \, , 
\end{equation}
for any functional $F$. By Lemma \ref{nufi}, (\ref{Qdef}) makes sense and it defines a probability measure on the space $\bbD([0, \infty) , [0, \infty])$. The Markov property for CSBP and Lemma \ref{nufi} easily 
imply that for any $s\! \leq\!  s_0\! \leq\! t$, 
\begin{equation}
\label{Qfix}
Q_{t, \ee}^s \big[ F (\rZ_{s_0 + \, \cdot \, } )  \big] = \frac{1}{\nu_t ((\ee, \infty])} 
\int_{(0, \infty]} \!\!\!\!\! \! \! \, \nu_{s_0} (dx) \, \, \bbE_x 
\big[  F( \rZ) \, ; \, \rZ_{t-s_0} \!> \!\ee \, \big]  \, , 
\end{equation}
We first prove that for $t$ and $ \varepsilon$ fixed, the laws $Q^s_{t, \ee}$ are tight as 
$s\! \rightarrow \!0$. By (\ref{Qfix}), it is clear that we only need to control the paths in a neighbourhood of time $0$. By a standard criterion for Skorohod topology (see for instance Theorem 16.8~\cite{Billin99}  p.~175), the laws $Q^s_{t, \ee}$ are tight as $s\! \rightarrow\! 0$ if the following claim 
holds true: for any $\eta , \delta \! \in\!  (0, 1)$, there exists $a_1 \! \in \! (0, \frac{_{_1}}{^{^4}} t)$ such that 
\begin{equation}
\label{claimQ}
\forall s \!\in \! (0, a_1 ], \quad Q^s_{t, \ee} \Big( \sup_{^{[0, a_1 ]}} \rZ > 2\eta \Big) < \delta \; .
\end{equation} 
To prove (\ref{claimQ}), we first prove that for any $\eta , \delta \! \in \! (0, 1)$, there exists $a_0  \in 
(0,t)$ such that 
\begin{equation}
\label{claimQQ}
\textrm{$\forall s , b \!\in \! (0, a_0  ]$ such that $s\leq b$}, 
\quad Q^s_{t, \ee} \big( \rZ_b  \!>\!  \eta \big) < \frac{_1}{^3} \delta \; .
\end{equation} 
\noindent
\textit{Proof of (\ref{claimQQ}).} Recall that $\un_{[1, \infty]} (y) \! \leq\! C (1\!-\!e^{-y}) $, for any $y \!\in \![0, \infty]$, where $C\!=\! (1\!-\!e^{-1})^{-1}$. 
Fix $\eta , \delta \! \in \! (0, 1) $ and $s, b\! \in\! (0, t)$ such that $s \!\leq \!b$. 
Then, (\ref{Qfix}), with $b\!=\! s_0$, implies that 
\begin{eqnarray*} 
Q^s_{t, \ee} \big( \rZ_b  \!>\!  \eta \big) & \leq & C\, Q^s_{t, \ee} \big( 1-e^{-\frac{1}{\eta} \rZ_b} \big)= 
\frac{C}{\nu_t ((\ee, \infty])} \int_{(0, \infty]} \!\!\!\!\! \! \! \, \nu_{b} (dx) \, \big( 1-e^{-\frac{1}{\eta} x} \big) \, 
\bbP_x ( \rZ_{t-b} \!> \!\ee ) \\
& \leq &\frac{C^2}{\nu_t ((\ee, \infty])}
\int_{(0, \infty]} \!\!\!\!\! \! \! \, \nu_{b} (dx) \, \big( 1-e^{-\frac{1}{\eta} x} \big) \, 
\bbE_x \big[  1-e^{-\frac{1}{\ee} \rZ_{t-b}} \big] \\
& \leq &  \frac{C^2}{\nu_t ((\ee, \infty])}
\int_{(0, \infty]} \!\!\!\!\! \! \! \, \nu_{b} (dx) \, \big( 1-e^{-\frac{1}{\eta} x} \big) \big( 1-e^{-x u(t-b,\frac{1}{\ee}) }
\big)= : f(b) . 
\end{eqnarray*}
By developping the product in the integral of the last right member of the inequality, we get 
$$ f(b)=  \frac{C^2}{\nu_t ((\ee, \infty])} \Big(  u(b, \frac{_1}{^\eta}) + u(t, \frac{_1}{^\ee}) -u\big( \, b\, ,\,  \frac{_1}{^\eta}\!+ \!u(t\!-\!b, \frac{_1}{^\ee}) \big)  \, \Big) \; \underset{^{b \rightarrow 0}}{-\!\!-\!\! \!\longrightarrow} \; 0 \, ,$$ 
We then define $a_0$ such that $\sup_{b\in (0, a_0]} f(b) < \frac{1}{3} \delta$, which implies (\ref{claimQQ}). \qed 

\smallskip

\noindent
\textit{Proof of (\ref{claimQ}).} We fix  $\eta , \delta \! \in \!(0, 1) $. Let $a \! \in \! (0, \frac{_{_1}}{^{^4}} t)$ 
such that (\ref{ineqZZ}) in Lemma \ref{techZ} holds true with $t_0\! = \!t$. 
Let $a_0$ as in (\ref{claimQQ}). We next set 
$a_1 \! =\!  a\! \wedge \! a_0$. We fix $s \! \in\!  (0, a_1]$ and we then get the following inequalities: 
\begin{eqnarray*} 
Q^s_{t, \ee} \Big( \sup_{^{[0, a_1 ]}} \rZ > 2\eta \Big) & \leq & Q^s_{t, \ee} (\rZ_s > \eta )+  
Q^s_{t, \ee} \Big( \sup_{^{[0, a_1 ]}} \rZ > 2\eta  \, ; \  \rZ_s \leq  \eta  ,\Big) \\
& \leq &  \frac{_1}{^3} \delta + \frac{1}{\nu_t ((\ee, \infty])} \int_{(0, \eta]} \!\!\!\!\! \! \! \, \nu_{s} (dx) \, 
\bbP_x  \Big( \!\! \sup_{\; \;^{[0, a_1-s ]}} \!\! \rZ > 2\eta  \, ; \  \rZ_{t-s} \! >\! \ee  \Big) \\
& \leq & \frac{_1}{^3} \delta + \frac{2}{\nu_t ((\ee, \infty])} \int_{(0, \eta]} \!\!\!\!\! \! \! \, \nu_{s} (dx) \, 
\bbP_x  \big(\rZ_{a_1-s} > \eta  \, ; \  \rZ_{t-s} \! >\! \ee  \big)\\
& \leq &  \frac{_1}{^3} \delta  + 2\, Q^s_{t, \ee} (\rZ_{a_1} > \eta ) \; < \; \delta .
 \end{eqnarray*}
Here we use (\ref{claimQQ}) in the second line, (\ref{ineqZZ}) in the third line and (\ref{claimQQ}) in the fourth one. \qed  

\smallskip
  
We have proved that for $t, \varepsilon$ fixed, the laws $Q^s_{t, \ee}$ are tight as $s\! \rightarrow \! 0$. 
Let $Q_{t, \ee}$ stand for a possible limiting law. By a simple argument, $Q_{t, \varepsilon}$ has no fixed 
jump at time $s_0$ and basic continuity results entail that (\ref{Qfix}) holds true with $Q_{t,\ee}$ instead of $Q^s_{t, \ee}$, which fixes the finite-dimensional marginal laws of $Q_{t, \ee}$ on $(0, \infty)$. Next observe that for $\eta, \delta \! \in \! (0, 1)$ and $a_1 \!  \in \!  (0, \frac{_{_1}}{^{^4}} t)$ as 
in (\ref{claimQ}), the set 
$\{ \sup_{(0, a_1)} \rZ \! > \! 2\eta \}$ is an open set of $\bbD([0, \infty), [0, \infty])$. Then, by (\ref{claimQ}) and the Portmanteau Theorem, $Q_{t, \ee} (\sup_{(0, a_1)} \rZ > 2 \eta) \leq  \delta$. This easily implies that 
$Q_{t, \ee}$-a.s.~$\rZ_0= 0$, which completely fixes the finite-dimensional marginal laws of 
$Q_{t, \ee}$ on $[0, \infty)$. This proves that there is only one limiting distribution and $Q^s_{t, \ee} \rightarrow Q_{t, \ee}$ in law as $s \rightarrow 0$. 

We next set $\rN_{t, \ee}= \nu_t ((\ee, \infty]) \, Q_{t, \ee}$. We easily get 
$\rN_{t, \ee} -\rN_{t, \ee^\prime}= \rN_{t, \ee} (\, \cdot \, \, ; \, \rZ_t \!\in\! (\ee, \ee^\prime] )$, for any $0\! <\! \ee \!< \! \ee^\prime \!< \! 1$. Fix $\ee_p \in (0, 1)$, $p\in \bbN$, that decreases to $0$. 
We define a measure $\rN_t$ by setting 
$$ \rN_t =\rN_{t, \ee_0} + \sum_{p\geq 0} \; \rN_{t, \ee_{p\!+\!1}} \big( \, \cdot \, \, ; \, \rZ_t \! \in \! (\ee_{p\!+\!1} , \ee_p] \, \big) = \rN_{t, \ee_0} + \sum_{p\geq 0} \; \rN_{t, \ee_{p\!+\!1}} \!- \!  \rN_{t, \ee_{p}}  \; .$$
By the first equality, $\rN_t$ is a well-defined $\sigma$-finite measure; the second equality shows 
that the definition of $\rN_t$ does not depend on the sequence $(\ee_p)_{p\in \bN}$, which implies 
$\rN_t  \big( \, \cdot \, \, ; \, \rZ_t > \ee \, \big) = \nu_t ((\ee, \infty]) \, Q_{t, \ee} $, for any $\ee \! \in \! (0, 1)$. 
Consequently, we get $\rN_t -\rN_{t^\prime}= \rN_t (\, \cdot \, \, ; \, \rZ_{t^\prime}= 0 )$, for any $t^\prime \!> \! t \! >\! 0$. Fix $t_q \in (0, 1)$, $q\in \bbN$, that decreases to $0$. 
We define $\rN_\Ppsi$ by setting 
$$ \rN_\Ppsi = \rN_{t_0}+ \sum_{q\geq 0} \; \rN_{t_{q\!+\!1}} \big( \, \cdot \, \, ; \, \rZ_{t_q}=0  \, \big) 
= \rN_{t_0} + \sum_{q\geq 0} \; \rN_{t_{q\!+\!1}} \!- \!  \rN_{t_{q}}  \; .$$
The first equality shows that $\rN_\Ppsi$ is a well-defined measure and the second one that its definition does not depend on the sequence $(t_q)_{q\in \bN}$, which implies 
 \begin{equation}
\label{conct}
\forall \ee \! \in \! (0, 1) , \; \forall t \in (0, \infty) , \quad \rN_\Ppsi  \big( \, \cdot \, \, ; \, \rZ_t > \ee \, \big) = 
\nu_t ((\ee, \infty]) \, Q_{t, \ee} \; .
\end{equation}
This easily entails that for any nonnegative functional $F$ 
\begin{equation}
\label{shift}
 \forall t \in (0, \infty) , \quad \rN_\Ppsi \big[ F( \rZ_{t + \, \cdot \, } ) \, ; \, \rZ_t \! > \! 0 \big]= \int_{(0, \infty]} \!\!\!\!\!\! \nu_t (dx) \, \bbE_x [\, F  \, ] \; .
\end{equation}
Recall that $\zeta $ is the time of absorption in $\{ 0, \infty\}$. Since $\rN_{t_q, \ee_p} (\zeta \!=\!  0)\!= \!0$, we get $\rN_\Ppsi (\zeta \!=\! 0)\!=\! 0$ and thus, $\rN_\Ppsi (\{ \mathtt{O} \})\!=\! 0$, where 
$\mathtt{O}$ stands for the null function. 
Set $A_{p, q}= \{ \rZ_{t_q} \!> \! \ee_p\}$. Then, $\rN_\Ppsi (A_{p,q}) \!<\!  \infty$ by (\ref{conct}). Since
$\bbD([0, \infty) , [0, \infty])\!=\! \{ \mathtt{O}\} \cup \bigcup_{p,q\geq 1} A_{p, q}$, $\rN_\Ppsi$ is sigma-finite. Properties (b) and (c) are easily derived from (\ref{shift}), 
(\ref{conct}) and standard limit-procedures: the details are left to the reader. \cqfd
\end{proof}

\subsection{Proof of Theorem \ref{construc}.}
\label{constrMsec}
\subsubsection{Poisson decomposition of CSBP.} 
From now on, we fix $(\Omega, \ccF, \bP)$, a probability space on which are defined all the random variables that we mention, 
unless the contrary is explicitly specified. We also fix $x \!\in \!(0, \infty)$ and we recall that $\ell$ stands for the Lebesgue measure on $\bbR$ or on $[0, x]$, according to the context.   

We first briefly recall Palm formula for Poisson point measures: let $E$ be a Polish space equipped with its Borel sigma-field $\ccE$. Let $A_n \! \in\! \ccE$, $n\! \in \! \bbN$, be a partition of 
$E$. We denote 
by $\ccM_{\textrm{pt}} (E)$ the set of point measures $m$ on $E$ such that $m(A_n)\! < \! \infty$ for any $n\! \in \!  \bbN$; we equip $\ccM_{\textrm{pt}} (E)$ with the sigma-field generated by the applications $m\mapsto m(A)$, where $A$ ranges in $\cE$. Let $\cN \! =\!  \sum_{i\in I} \delta_{z_i}$ be a Poisson point measure on 
$E$ whose intensity measure $\mu$ satisfies $\mu(A_n)\! < \! \infty$ for every $n\! \in \!  \bbN$. 
We shall refer to the following as the \textit{Palm formula}:
for any measurable $F:E \! \times \!  \ccM_{\textrm{pt}} (E) \!  \longrightarrow \!  [0, \infty)$, 
\begin{equation}  
\label{Palm}
\bE \Big[ \sum_{i\in I} F(z_i \, , \cN\!-\!\delta_{z_i} ) \Big]= \int_E \! \mu(dz) \, \bE \big[ F(z \, , \cN) \big] \; .
\end{equation}
If one applies twice this formula, then we get for any measurable $F:E \! \times \!  E \! \times \! \ccM_{\textrm{pt}} (E) \!  \longrightarrow \!  [0, \infty)$, 
\begin{equation}  
\label{rePalm}
 \bE \Big[ \sum_{\substack{i, j\in I \\ i\neq j}} F(z_i \, ,  z_j\, ,  \cN\!-\!\delta_{z_i}\!-\!\delta_{z_j} ) \Big]= \int_E \! \mu(dz) \int_E \! \mu(dz^\prime) \, \bE \big[ F(z \, , z^\prime \, ,  \cN) \big] \; .
\end{equation}
We next introduce the Poisson point measures that are used to define the population associated with a  CSBP. 

\smallskip

\noi
\textbf{Infinite variation cases.} We assume that $\Ppsi$ is of infinite variation type. Let 
\begin{equation} \label{defP}
\ccP= \sum_{i\in I} \delta_{(x_i, \rZ^i)} \end{equation} 
be a Poisson point measure on $[0, x] \! \times \! \bbD([0, \infty), [0, \infty])$, with intensity $\un_{[0, x]}(y) \ell(dy)\rN_\Ppsi (d\rZ)$, where $\rN_\Ppsi$ is the cluster measure associated with $\Ppsi$ as specified in Theorem \ref{cluster}. Then, for any $t \! \in \! (0, \infty)$, we define the following random point measures on $[0, x]$:    
\begin{equation}
\label{cZdef}
 \cZ_t   =  \sum_{i\in I} \rZ^i_t \, \delta_{x_i}  \quad \textrm{and}  \quad
\cZ_{t-}   =  \sum_{i\in I} \rZ^i_{t-} \, \delta_{x_i}    \; . 
\end{equation}
We also set $\cZ_0 =  \ell (\, \cdot \, \cap [0, x])$. \cq

\medskip

\noi
\textbf{Finite variation cases.} We assume that $\Ppsi$ is of finite variation type and not linear. 
Recall from (\ref{psifvar}) the definition of $D$. Let 
\begin{equation} \label{defQ}
\ccQ= \sum_{j\in J} \delta_{(x_j, t_j,\rZ^j)} 
\end{equation} 
be a Poisson point measure on $[0, x] \! \times \!  [0, \infty) \! \times \! \bbD([0, \infty), [0, \infty])$, whose intensity measure is  
$$\un_{[0, x]}(y) \ell(dy) \, e^{-Dt} \ell (dt) \int_{(0,\infty)} \!\!\!\!\!\!\!\! \pi(dr)\,  \bbP_r ( d\rZ) \, , $$ 
where $\bbP_r$ is the canonical law of a CSBP($\Ppsi, r$) and $\pi$ is the L\'evy measure of $\Ppsi$. Then, for any $t \! \in \! (0, \infty)$, we define the following random measures on $[0, x]$:    
\begin{equation}
\label{cZdef2}
 \cZ_t   \!= \!e^{-Dt} \ell (\, \cdot \, \cap [0, x]) \!+\! \sum_{j\in J} \un_{\{ t_j \leq t \}} 
 \rZ^j_{t-t_j}  \delta_{x_j} ,  \quad \cZ_{t-}\!=\! e^{-Dt} \ell (\, \cdot \, \cap [0, x]) \!+\! \sum_{j\in J} 
 \un_{\{ t_j \leq t \}} 
 \rZ^j_{(t-t_j)-}  \delta_{x_j} .
\end{equation}
We also set $\cZ_0 =  \ell (\, \cdot \, \cap [0, x])$.  \cq

\medskip

 In both cases, for any $t\! \in \! [0, \infty)$ and any $B \in \ccB([0, x])$, 
 $\cZ_t (B) $ and $\cZ_{t-} (B)$ are $[0, \infty]$-valued $\ccF$-measurable random variables. 
The finite dimensional marginals of 
$(\cZ_t (B))_{t\in [0, \infty)}$ are those of a CSBP($\Ppsi, \ell (B)$): in the infinite variation cases, it is a simple consequence of Theorem \ref{cluster} (c); in the finite variation cases, 
it comes from direct computations: we leave the details to the reader. 
Moreover, if $B_1, \ldots, \, B_n$ are disjoint Borel subsets of $[0,x]$, note that 
the processes $(\cZ_t (B_k))_{t\in [0, \infty)}$, $1\! \leq \! k \! \leq\! n$ are independent. 
To simplify notation, we also set 
\begin{equation} \label{defZZ}
\forall t\in [0, \infty ), \quad Z_t= \cZ_t ([0, x]) \; , 
 \end{equation}
that has the finite dimensional marginals of a CSBP($\Ppsi, x$).  

\subsubsection{Regularity of $\cZ$.} 
Since we deal with possibly infinite measures, we introduce the following specific notions. We fix 
a metric $d$ on $[0, \infty]$ that generates its topology. 
For any positive Borel measures $\mu$ and $\nu$ 
on $[0, x]$, we define their variation distance by setting 
\begin{equation}
\label{dvar}
 d_{{\rm var}} (\mu, \nu):=\!\!\!\!\!  \sup_{\; \; \; \; ^{B \in \ccB([0, x])}} \!\!\!\!\! 
d \big( \mu (B)  ,  \nu (B)\big) \; .
\end{equation}
The following proposition deals with the regularity of $\cZ$  on $(0, \infty)$, which is sufficient for our purpose. The regularity at time $0$ is briefly discussed later.
\begin{proposition}
\label{reguZ} Let $\cZ$ be as in (\ref{cZdef}) or (\ref{cZdef2}). Then, 
\begin{equation}
\label{cZreg}
 \textrm{$\bP$-a.s.~$\; \forall t\in (0, \infty)$,} \quad  \lim_{h \rightarrow 0+} d_{{\rm var}} \big( \cZ_{t+h} , \cZ_t 
 \big)= 0 \quad {\rm and} \quad    \lim_{h \rightarrow 0+} d_{{\rm var}} \big( \cZ_{t-h} , \cZ_{t-} 
 \big)= 0.
\end{equation}
\end{proposition}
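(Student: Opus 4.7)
The plan is to exploit the Poisson structure: since $\cZ_t$ and $\cZ_{t+h}$ share the same countable set of atoms $\{x_i\}$, the variation distance $d_{\mathrm{var}}(\cZ_{t+h}, \cZ_t)$ reduces to an $\ell^1$-type control on the cluster-size differences $\rZ^i_{t+h} - \rZ^i_t$. I will describe the argument in the infinite variation case; the finite variation case is handled analogously, with $\ccQ$ from (\ref{defQ}) replacing $\ccP$ from (\ref{defP}) and Lemma \ref{supZfinvar} providing the needed domination by a subordinator.

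First I would prove that $\bP$-a.s., for every rational $T, \eta > 0$, the set $I_{T,\eta} := \{i \in I : \sup_{s \in [0, T]} \rZ^i_s \geq \eta\}$ is finite. By Poisson thinning this reduces to showing $\rN_\Ppsi(\sup_{s \leq T} \rZ_s \geq \eta) < \infty$. I would establish this by observing that for any $0 < s < T$, the Markov property under $\rN_\Ppsi$ (Theorem \ref{cluster}(c)) gives
\[
\rN_\Ppsi\bigl(\sup_{[s, T]} \rZ \geq 2\eta\bigr) = \int_{(0, \infty]} \nu_s(dx) \, \bbP_x\bigl(\sup_{[0, T-s]} \rZ \geq 2\eta\bigr),
\]
splitting the integrand according to whether $\rZ_{T-s}$ exceeds a small $\varepsilon$ and applying Lemma \ref{techZ} on the main piece; this produces a bound by a finite multiple of $\nu_T((\eta, \infty])$ plus a vanishing remainder as $s \downarrow 0$.

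Next, working on the full-probability event on which the atoms $x_i$ are pairwise distinct and the sets $I_{T,\eta}$ are finite for all rational $T, \eta > 0$, fix $t_0 \in (0, \infty)$ and $h > 0$ small. Since the atoms coincide,
\[
\sup_{B \in \ccB([0,x])} \lvert \cZ_{t_0+h}(B) - \cZ_{t_0}(B) \rvert \leq \sum_i \lvert \rZ^i_{t_0+h} - \rZ^i_{t_0} \rvert = (Z_{t_0+h} + Z_{t_0}) - 2 \sum_i \bigl(\rZ^i_{t_0+h} \wedge \rZ^i_{t_0}\bigr),
\]
where $Z_t := \cZ_t([0,x])$. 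Right-continuity of each $\rZ^i$ at $t_0$ gives $\rZ^i_{t_0+h} \wedge \rZ^i_{t_0} \to \rZ^i_{t_0}$ for every $i$, and dominated convergence (with summable dominant $\rZ^i_{t_0}$ of total $Z_{t_0}$) yields $\sum_i (\rZ^i_{t_0+h} \wedge \rZ^i_{t_0}) \to Z_{t_0}$. Combined with the right-continuity of $Z$ at $t_0$---itself obtained by decomposing $Z = Z^{\geq\eta} + Z^{<\eta}$ along $I_{T, \eta}$, the first a finite sum of cadlag paths and the residual admitting a cadlag regularization via the same cluster argument---the right-hand side tends to $0$. Since $d$ is equivalent to the Euclidean metric on any bounded subset of $[0, \infty)$, this yields the right-continuity claim, and left limits are obtained symmetrically using the existence of left limits for each $\rZ^i$ and for $Z$.

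The main obstacle is the explosion event $\{Z_{t_0} = \infty\}$, where the $\ell^1$-bound above is vacuous. I would address it using the boundedness of $d$ on $[0, \infty]$: by a Poisson count with mean $x \kappa(t_0)$, at time $t_0$ only finitely many clusters satisfy $\rZ^i_{t_0} = \infty$, and since $\infty$ is an absorbing state for CSBP, each such cluster still satisfies $\rZ^i_{t_0+h} = \infty$ for $h > 0$; hence $\cZ_{t_0+h}(B) = \cZ_{t_0}(B) = \infty$ whenever $B$ contains one of their positions, and the $d$-distance there vanishes, while for $B$ avoiding all of them the previous $\ell^1$-argument applies to the non-exploded clusters. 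Rigorously combining these two regimes into a single uniform control over $B$ is the subtlest step of the proof.
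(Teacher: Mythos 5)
Your $\ell^1$ estimate hinges on two ingredients: (i) right-continuity of the total mass $Z_{t}=\cZ_t([0,x])$ at every $t$, and (ii) dominated convergence for $\sum_i(\rZ^i_{t+h}\wedge\rZ^i_t)$. Point (ii) is fine when $Z_t<\infty$, but point (i) is precisely as hard as the proposition itself (the paper only obtains path regularity of $Z$ as a \emph{consequence} of Proposition~\ref{reguZ}), so invoking it is circular. You attempt to break the circularity by decomposing $Z=Z^{\geq\eta}+Z^{<\eta}$ along $I_{T,\eta}$, but this does not work: for $i\notin I_{T,\eta}$ the only pointwise bound available is $\sup_{[0,T]}\rZ^i\leq\eta$, and the family $\{\sup_{[0,T]}\rZ^i\}_{i\notin I_{T,\eta}}$ is \emph{not} summable. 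Indeed by the Palm formula $\bE[\sum_{i\notin I_{T,\eta}}\sup_{[0,T]}\rZ^i]=x\,\rN_\Ppsi[\,(\sup_{[0,T]}\rZ)\wedge\eta\,]$, and already for $\Ppsi(\lambda)=\lambda^2$ the scaling identity $\rN_\Ppsi(\sup\rZ>a)\asymp 1/a$ forces $\rN_\Ppsi[(\sup\rZ)\wedge\eta]=\infty$. So there is no dominant for the residual and the ``same cluster argument'' cannot give a cadlag regularization of $Z^{<\eta}$; in fact $Z^{<\eta}$ is an infinite sum of cadlag paths with no summable control on the individual suprema. Your preliminary claim $\rN_\Ppsi(\sup_{[0,T]}\rZ\geq\eta)<\infty$ also needs a different justification than the one sketched: Lemma~\ref{techZ} bounds the supremum over a \emph{short} interval $[0,b]$, $b\leq a<t_0/4$, whereas you would be invoking it for the sup over $[0,T-s]$ with $T-s$ close to $T$, which does not satisfy the hypotheses.

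The paper's proof avoids both problems by truncating differently: it retains the clusters $\{i:\rZ^i_{s_0}>\varepsilon\}$ and bounds the error $\sup_{t\in[s_0,s_0+T]}d_{\mathrm{var}}(\cZ^{\varepsilon'}_t,\cZ^\varepsilon_t)$ by $\varphi(V_{\varepsilon',\varepsilon})$ where $V_{\varepsilon',\varepsilon}=\sup_{[0,T]}Y^{\varepsilon',\varepsilon}$ is the running supremum of a \emph{single} CSBP whose initial value $Y^{\varepsilon',\varepsilon}_0$ tends to $0$ as $\varepsilon\to0$; Lemma~\ref{pathwkcont} then makes this small. This produces a uniform-in-$t$ approximation of $\cZ$ by $d_{\mathrm{var}}$-cadlag processes, which is stronger than your pointwise control and is what actually delivers the cadlag regularity, including the right-continuity of $Z$ as a by-product rather than a prerequisite.
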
    
 \begin{proof} We first prove the infinite variation cases. We proceed by approximation. Let us fix $s_0 \in (0, \infty)$. For any $\ee \in (0, 1)$, we set 
$$ \forall t \in (0, \infty) \, , \quad \cZ_t^\ee= \sum_{{i\in I}} \un_{\{ \rZ^i_{s_0} >\ee \}} \, \rZ^i_t \, \delta_{x_i} .   
$$
Note that $\#\{ i \!\in\! I : \rZ^i_{s_0} \!>\! \ee \}$ is a Poisson r.v.~with mean $x\rN_\Ppsi (\rZ_{s_0} \!>\!\ee) = x \nu_{s_0} ((\ee, \infty]) \!<\! \infty$. Therefore, $\cZ^\ee$ is a finite sum of weighted Dirac masses whose weights are cadlag $[0, \infty]$-valued processes. Then, by an easy argument, $\bP$-a.s.~$\cZ^\ee$ is $d_{{\rm var}}$-cadlag on $(0, \infty)$.   
 
For any $v \!\in \![0, \infty]$, then set $\varphi (v)\!= \! \sup \{ d(y, z) \, ; \, y \!\leq \! z \! \leq \! y\!+\!v \}$, which is well-defined, bounded, non-decreasing and such that $\lim_{v\rightarrow 0} \varphi (v)=0$. For any 
$ \varepsilon \! >\! \ee^\prime \! > \! 0$, observe that $\cZ_t^{\ee^\prime}\!= \!\cZ_t^{\ee}+ \sum_{i\in I}  \un_{\{ \rZ^i_{s_0}  \in  (\ee^\prime ,\ee] \}} \, \rZ^i_t \, \delta_{x_i}$. Then, we fix $T \! \in \! (0, \infty)$, we set $Y_{t}^{\ee^\prime \! \!, \,  \ee}:= \sum_{i\in I}  \un_{\{ \rZ^i_{s_0}  \in  (\ee^\prime ,\ee] \}} \, \rZ^i_{s_0+t}$ and we get 
$$ \sup_{^{t \in [s_0, s_0 +T]}} \!\! d_{{\rm var}} \big(\cZ_t^{\ee^\prime}, \cZ_t^{\ee} \big) \leq  \varphi (V_{\ee^\prime \! \!, \,  \ee}) \quad \textrm{where} \quad V_{\ee^\prime \! \!, \,  \ee}:= \sup_{t\in [0, T]} Y^{\ee^\prime \! \!, \,  \ee}_t \; .$$
Note that $ Y^{\ee^\prime \! \!, \,  \ee}$ is a cadlag CSBP($\Ppsi$). The exponential formula for Poisson point measures and Theorem \ref{cluster} (b) imply for any $\lambda \! \in \! (0, \infty)$, 
$$ -\frac{_1}{^x} \log \bE \big[ \exp \big(\! -\! \lambda Y^{\ee^\prime \! \!, \,  \ee}_0 \big)\big]= \int_{(\ee^\prime , \ee]} \!\!\!\!\!\!\!  \nu_{s_0} (dr) \big( 1-e^{-\lambda r}\big)\leq \lambda \! \int_{(0 , \ee]} \!\!\!\!\!\!\!  \nu_{s_0} (dr) \, r \underset{{\varepsilon \rightarrow 0}}{-\!\!\!-\!\!\! \longrightarrow } 0 \; .$$
For any $\eta \! \in \! (0, \infty)$, it easily implies $\lim_{ \varepsilon \rightarrow 0} \sup_{\ee^\prime  \in (0,  \varepsilon]} \bP ( Y_{0}^{\ee^\prime \! \!, \,  \ee}  \! >\! \eta ) \!= \! 0$. 
Next, note that $r\! \longmapsto \! \bbP_r ( \sup_{t\in [0, T]}  \rZ_t \! > \! \eta )$ is non-decreasing and recall that $\lim_{r\rightarrow 0+}  \bbP_r ( \sup_{t\in [0, T]}  \rZ_t \! > \! \eta )\!=\! 0$, by Lemma \ref{pathwkcont}. This limit, combined with the previous argument, entails that $\lim_{ \varepsilon \rightarrow 0} \sup_{\ee^\prime \in (0,  \varepsilon]}\bE[ \varphi (V_{\ee^\prime \! \!, \,  \ee})] = 0$.    

Therefore, we can 
find $\ee_p \! \in \!(0, 1)$, $p\! \in \!\bbN$, that decreases to $0$ such that $\sum_{p\geq 0}  
\bE [ \varphi( V_{ \ee_{p+1}, \ee_p}) ] \!< \! \infty$, and there exists $\Omega_0\in \ccF$ such that $\bP(\Omega_0)\! =\! 1$ and such that 
$R_p:= \sum_{q \geq p }  \varphi( V_{\ee_{q+1}, \ee_q }) \longrightarrow 0$ as $p\!\rightarrow \!\infty$, on $\Omega_0$.  
We then work determininistically on $\Omega_0$: by the previous arguments, for all Borel subsets $B$ of $[0, x]$, for all $t \!\in \! (s_0, s_0+T)$ and for all 
$q\! >\! p$, we get 
$d( \cZ_t^{\ee_q} (B), \cZ_t^{\ee_p} (B))  \!\leq \!R_p$ and $d( \cZ_{t-}^{\ee_q} (B), \cZ_{t-}^{\ee_p} (B)) 
\! \leq \! R_p$, since $d$ is a distance on $[0, \infty]$. Since $t\!> \! s_0$, the monotone convergence 
for sums entails that $\lim_{q\rightarrow \infty}  \cZ_t^{\ee_q} (B) \!= \!\cZ_t (B)$ and  
$\lim_{q\rightarrow \infty}  \cZ_{t-}^{\ee_q} (B) \!= \!  \cZ_{t-} (B)$. By the continuity of the distance $d$, for all $B$, all $t \! \in\! (s_0, \infty)$ and all $p \!\in\! \bbN$, we get 
$d( \cZ_t (B), \cZ_t^{\ee_p} (B)) \! \leq \! R_p$ and 
$d( \cZ_{t-} (B), \cZ_{t-}^{\ee_p} (B)) \! \leq \!  R_p$. This easily implies that $\cZ$ is  $d_{{\rm var}}$-cadlag on $(s_0, s_0+T)$ since the processes $\cZ^{\ee_p}$ are also 
$d_{{\rm var}}$-cadlag on the same interval. This completes the proof in the infinite variation cases since $s_0$ can be taken arbitrarily small and $T$ arbitrarily large.

\medskip
  
  We next consider the finite variation cases: we fix $s_0 \in (0, \infty)$ and for any $\ee \in (0, 1)$, we set 
$$ \forall t \in [0, s_0] \, , \quad 
\cZ_t^\ee = \sum_{{j\in J}} \un_{\{t_j \leq t\, , \, \rZ^j_{0} >\ee \}} \, \rZ^j_{t-t_j} \, \delta_{x_j} . $$
Since $\#\{ j \!\in\! J : t_j \leq s_0 \, , \, \rZ^j_{0} \!>\! \ee \}$ is a Poisson r.v.~with mean $x\, \pi ((\ee, \infty])\int_0^{s_0} e^{-Dt} dt  \!<\! \infty$, $\cZ^\ee$, as a process indexed by $[0, s_0]$, is a finite sum of weighted Dirac masses whose weights are cadlag $[0, \infty]$-valued processes on $[0, s_0]$: by an easy argument, it is $d_{{\rm var}}$-cadlag on $[0, s_0]$. Next observe that for any 
$ \varepsilon \! >\! \ee^\prime \! > \! 0$,     
$\cZ_t^{\ee^\prime}\!= \!\cZ_t^{\ee}+ \sum_{{j\in J}} \un_{\{t_j \leq t\, , \, \rZ^j_{0}   \in  (\ee^\prime,\ee] \}} \, \rZ^j_{t-t_j} \, \delta_{x_j}$. Thus,  
$$ \sup_{^{t \in [0, s_0]}} \!\! d_{{\rm var}} \big(\cZ_t^{\ee^\prime}, \cZ_t^{\ee} \big) \leq 
\varphi (V_{\ee^\prime \!\!, \,  \ee}) \quad \textrm{where} \quad V_{\ee^\prime \!\!, \,  \ee}:= \sum_{{j\in J}}
\un_{\{t_j \leq s_0\, , \, \rZ^j_{0}  \in  (\ee^\prime,\ee] \}}  \sup_{^{t\in [0, s_0]}}\!\! \rZ^j_t \; .$$
The exponential formula for Poisson point measures then implies for any $\lambda \! \in \! (0, \infty)$,  
$$  -\frac{_1}{^x} \log \bE \big[\exp \big(\!-\! \lambda V_{\ee^\prime \!\!, \,  \ee}  \big) \big] 
=\int_0^{s_0}\!\!\! \!\!\! e^{-Dt}\, dt\!  \int_{(\varepsilon^\prime,\varepsilon]} \!\!\!\!\! \!\!\! \pi(dr)\, 
\bbE_r\Big[1\!- \! e^{-\lambda\sup_{[0,s_0]} \rZ }\Big] \; .$$
We now use Lemma \ref{supZfinvar}: if $D\! \in \! (0, \infty)$, we set $\Ppsi^* (\lambda)= \Ppsi (\lambda)-D\lambda$ and if $D \! \in \! (-\infty, 0]$, we simply take $\Ppsi^*=\Ppsi$. Denote by $u^*$ the function derived from $\Ppsi^*$ as $u$ is derived from $\Ppsi$ by (\ref{integeq}). As a consequence of Lemma \ref{supZfinvar}, we get 
$\bbE_r[1- e^{-\lambda\sup_{[0,s_0]} \rZ }] \leq 1-e^{-ru^*(s_0, \lambda)}$. Thus,  
\begin{eqnarray*}   -\frac{_1}{^x} \log \bE \big[\exp \big(\!-\! \lambda V_{\ee^\prime \!\!, \,  \ee}  \big) \big] &\leq  & \int_0^{s_0}\!\!\! \!\!\! e^{-Dt}\, dt\!  \int_{(\varepsilon^\prime,\varepsilon]} \!\!\!\!\! \!\!\! \pi(dr)\,  
\big( 1\!- \!e^{-ru^*(s_0, \lambda)} \big)\\
& \leq & s_0e^{\lvert D \rvert s_0} u^*(s_0, \lambda) \int_{(0,\varepsilon]} \!\!\!\!\!\! \pi(dr)\, r  \quad   \underset{^{ \ee\rightarrow 0}}{-\!\!-\!\!\!\longrightarrow} \; 0. 
 \end{eqnarray*}
This easily entails $\lim_{ \varepsilon \rightarrow 0} \sup_{\ee^\prime \in (0,  \varepsilon]}\bE[ \varphi (V_{\ee^\prime \! \!, \,  \ee})] = 0$. 
We then argue as in the infinite variation cases: there exists a sequence $\ee_p \! \in \!(0, 1)$, $p\! \in \!\bbN$, that decreases to $0$ and there exists $\Omega_0\in \ccF$ with $\bP(\Omega_0)\! =\! 1$, such that $R_p:= \sum_{q \geq p }  \varphi( V_{\ee_{q+1}, \ee_q}) \longrightarrow 0$ as $p\!\rightarrow \!\infty$, on $\Omega_0$. We work determininistically on $\Omega_0$: we set $\cZ^*_t = \cZ_t -e^{-Dt}\ell (\, \cdot \, \cap [0, x])$, that is the purely atomic part of $\cZ_t$.
Then, for all $B$, for all $t \! \in \! [0, s_0] $ and for all $p \!\in \! \bbN$, 
$d( \cZ^*_t (B), \cZ_t^{\ee_p} (B)) \! \leq \! R_p$ and 
$d( \cZ^*_{t-} (B), \cZ_{t-}^{\ee_p} (B)) \! \leq \! R_p$. This implies 
that $\bP$-a.s.~$\cZ^*$ is $d_{\textrm{var}}$-cadlag on $[0, s_0]$, by the same arguments as in the infinite variation cases. Clearly, a similar result holds true for $\cZ$ on $[0, s_0]$, which 
completes the proof of Proposition \ref{reguZ}, since $s_0$ can be chosen arbitrarily large. \cqfd    
\end{proof}

Note that in the finite variation cases,
$\cZ$ is $d_{{\rm var}}$-right continuous at $0$. In the infinite variation cases, this cannot 
be so: 
indeed, set $B= [0,x] \backslash \{ x_i \, ; \,  i\! \in \! I\}$, then $\cZ_t (B) \!= \! 0$ for any $t \!\in \! (0, \infty)$ 
but $\cZ_0 (B) \!= \! \ell (B) \! =\! x$. However, we have the following lemma. 
\begin{lemma} 
\label{zero} Assume that $\Ppsi$ is of infinite variation type. Let $\cZ$ be defined on $(\Omega, \ccF, \bP)$ by (\ref{cZdef}). Then 
$$ \forall B\in \ccB([0, x]), \quad \textrm{$\bP$-a.s.~$\lim_{t\rightarrow  0+} \cZ_t (B)= \ell(B)$.}  $$
This implies that $\bP$-a.s. $\cZ_t \rightarrow \cZ_0$ weakly as $t\rightarrow 0+$.  
\end{lemma}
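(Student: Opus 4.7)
I will first prove $\bP$-a.s.\ convergence $\cZ_t(B) \to \ell(B)$ for each fixed $B \in \ccB([0, x])$ via an exponential CSBP martingale, then deduce the weak convergence statement from a countable family of half-intervals together with monotonicity.

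The exponential formula for the Poisson point measure $\ccP$, combined with Theorem~\ref{cluster}(b) and (\ref{uform}) (with $d(t)=0$ since $\Ppsi$ is of infinite variation), yields
\begin{equation*}
\bE \big[ e^{-\lambda \cZ_t(B)} \big] = \exp \big( -\ell(B)\, u(t, \lambda) \big), \quad \lambda, t > 0,
\end{equation*}
so that, by continuity of $u(\cdot, \lambda)$ with $u(0, \lambda) = \lambda$, $\cZ_t(B) \to \ell(B)$ in probability as $t \to 0+$. To upgrade to a.s.\ convergence, fix $T, \lambda > 0$. Applying the cluster Markov property Theorem~\ref{cluster}(c) atom by atom --- using the independence of distinct atoms of $\ccP$ --- yields the conditional Laplace identity
\begin{equation*}
\bE \big[ e^{-\mu \cZ_r(B)} \, \big| \, \ccF_s \big] = \exp \big( -u(r-s, \mu)\, \cZ_s(B) \big), \quad 0 < s < r \leq T,\; \mu \geq 0,
\end{equation*}
where $\ccF_s$ is the natural filtration of $\ccP$ up to time $s$. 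Setting $\mu = u(T-r, \lambda)$ and using the flow identity $u(r-s, u(T-r, \lambda)) = u(T-s, \lambda)$ from (\ref{flow}), we deduce that
\begin{equation*}
M_s := \exp \big( -u(T-s, \lambda)\, \cZ_s(B) \big), \quad s \in (0, T],
\end{equation*}
is a bounded cadlag $(\ccF_s)$-martingale (cadlag by Proposition~\ref{reguZ}). Bounded reverse martingale convergence supplies a $\bP$-a.s.\ and $L^1$ limit $M_{0+}$ as $s \to 0+$; the convergence in distribution from the first step forces $M_{0+} = e^{-u(T, \lambda) \ell(B)}$ $\bP$-almost surely. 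Inverting the exponential and using $u(T-s, \lambda) \to u(T, \lambda) \in (0, \infty)$ by continuity, we obtain $\cZ_s(B) \to \ell(B)$ $\bP$-almost surely.

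For the weak convergence, I apply the preceding result to $B = [0, q]$ with $q$ in a countable dense subset $Q \subset [0, x]$ and to $B = [0, x]$. Outside a single $\bP$-null set, $\cZ_t([0, q]) \to q$ for every $q \in Q$ and $\cZ_t([0, x]) \to x$; monotonicity of $a \mapsto \cZ_t([0, a])$ then sandwiches $\cZ_t([0, a]) \to a$ for every $a \in [0, x]$, which, together with total mass convergence, is equivalent to $\cZ_t \to \cZ_0 = \ell(\,\cdot\, \cap [0, x])$ weakly. The main obstacle I foresee is rigorously establishing the conditional Laplace identity underpinning the martingale: because $\rN_\Ppsi$ is only $\sigma$-finite, the conditional independence of cluster futures across atoms must be obtained by truncation to the finite-intensity subfamily $\{\sup \rZ > \varepsilon\}$ followed by a passage to the limit $\varepsilon \to 0$. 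The choice of an exponential rather than first-moment martingale neatly avoids the difficulty that $\bE[\cZ_s(B)] = +\infty$ when $\Ppsi'(0+) = -\infty$.
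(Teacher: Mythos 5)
Your proof is correct, but it takes a genuinely different and more labor-intensive route than the paper's. The paper argues in essentially two lines: the process $(\cZ_t(B))_{t\ge 0}$ has the finite-dimensional marginals of a CSBP$(\Ppsi,\ell(B))$, hence by Feller regularization it admits a cadlag modification $Y$ on all of $[0,\infty)$; Proposition~\ref{reguZ} already gives that $\cZ_\cdot(B)$ is cadlag on $(0,\infty)$, so $Y$ and $\cZ_\cdot(B)$ coincide a.s.\ on $(0,\infty)$, and the right continuity of $Y$ at $0$ (with $Y_0=\ell(B)$) delivers the limit. You instead build the exponential cluster martingale $M_s=\exp(-u(T-s,\lambda)\cZ_s(B))$ and apply a downward martingale convergence argument, identifying the limit from the one-time Laplace transform computed by the exponential formula. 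That also works, but it obliges you to establish the conditional Laplace identity $\bE[e^{-\mu\cZ_r(B)}\mid\ccF_s]=e^{-u(r-s,\mu)\cZ_s(B)}$ from scratch, which is precisely what the paper postpones to Lemma~\ref{pntMrkv} and proves by the $\varepsilon$-truncation you flag; the paper's ordering avoids this circularity by quoting the textbook Feller modification theorem rather than re-deriving the Markov-exponential machinery. Two points worth making explicit if you flesh out your version: the identity needs the convention $e^{-\mu\cdot\infty}=0$ because $\cZ_s(B)$ can be $+\infty$ for non-conservative $\Ppsi$ ($\infty$ being absorbing, the identity still holds); and what you call ``bounded reverse martingale convergence'' is the downward (L\'evy) theorem along sequences $s_n\downarrow 0$, upgraded to a genuine $\lim_{s\to 0+}$ by an upcrossing bound plus the cadlag paths from Proposition~\ref{reguZ}. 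Your reduction of weak convergence to pointwise convergence of the distribution function on a dense countable set, together with monotonicity and total-mass convergence, is fine and matches what the paper leaves implicit.
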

\begin{proof} Since $ (\cZ_t (B))_{t\in [0, \infty)}$ has the finite dimensional marginal laws of a CSBP($\Ppsi, \ell (B)$), it admits a modification $Y=(Y_t)_{t\in [0, \infty)}$ that is cadlag on $[0, \infty)$. By Proposition \ref{reguZ}, observe that $\cZ_{\, \cdot \, } (B)$ 
is cadlag on $(0, \infty)$. Therefore, $\bP$-a.s.~$Y$ 
and $\cZ_{\, \cdot \, } (B)$ coincide on $(0, \infty)$, which implies the lemma. \cqfd 
\end{proof}

\subsubsection{Proof of Theorem \ref{construc} and of Theorem \ref{mainth} ($i$).} 
Recall the notation $Z_t = \cZ_t ([0, x])$. By Proposition 
\ref{reguZ}, $Z$ is cadlag on $(0, \infty)$ and by arguing as in Lemma \ref{zero},  
without loss of generality, we can assume that $Z$ is right continuous at time $0$: it is therefore a cadlag CSBP($\Ppsi , x$). Recall from (\ref{absordef}) the definition of the absorption times $\zeta_0$, $\zeta_\infty$ and $\zeta$ of $Z$.      
We first set 
\begin{equation}
\label{Mnonabs}
\forall t \in [0,\zeta), \; \forall B \in \ccB ([0, x]), \qquad M_t (B)= 
\frac{\cZ_t (B)}{Z_t} \; .
\end{equation}
Observe that $M$ has the desired regularity on $[0, \zeta)$ by Proposition \ref{reguZ} and Lemma \ref{zero}. Moreover $M$ satisfies 
property (\ref{Mdef}). It only remains to define $M$ for the times $t \! \geq \! \zeta$ on the event 
$\{ \zeta \! < \!  \infty \}$.

Let us first assume that $\bP (\zeta_0 \!<\!  \infty) \!> \!0$, which can only happen if 
$\Ppsi$ satisfies (\ref{nonpersist}). Note that in this case, $\Ppsi$ is of infinite variation type.  
Now recall $\ccP$ from (\ref{defP}) and $\cZ$ from (\ref{cZdef}). Thus, $\zeta_0\! =\!  \sup_{i\in I} \zeta_0^i$, where $\zeta_0^i$ stands for the extinction time of $\rZ^i$. 
Then, $\bP( \zeta_0  \! < \!  t )= \exp(-x\rN_\Ppsi (\zeta_0 \! \geq \! t ))$. Thus, $  \rN_\Ppsi (\zeta_0 \! \geq \! t)= v(t)$, that is the function defined right after (\ref{nonpersist}) which satisfies  
$\int_{v(t)}^\infty dr/ \Ppsi (r)= t$. Since $v$ is $C^1$, the law (restricted to $(0, \infty)$) 
of the extinction time $\zeta_0$ under $\rN_\Ppsi$ is diffuse. This implies that $\bP$-a.s.~on $\{ \zeta_0 \! < \! \infty\}$ 
there exists a unique $i_0\in I$ such that $\zeta_0= \zeta^{i_0}_0$. 
Then, we set $\xi_0:=\sup \{ \zeta^i_0 \; ; \;  i\in I \backslash \{ i_0\}  \}$, $\ree = x_{i_0}$ and we get 
$M_t= \delta_{\ree}$ for any $t \in (\xi_0, \zeta_0)$. Thus, on the event $\{ \zeta_0 \! < \! \infty\}$ and for any $t\! >\! \zeta_0$, we set $M_t= \delta_{\ree}$ and $M$ has the desired regularity on the event $\{ \zeta_0 \!< \! \infty\}$. An easy argument on Poisson point measures entails that conditional on 
$\{ \zeta_0\! < \! \infty \}$, $\ree$ is uniformly distributed on $[0, x]$.

Let us next assume that $\bP (\zeta_\infty \! <\!  \infty) \! >\! 0$, which can only happen if $\Ppsi$ satisfies (\ref{nonconserv}). We first consider the infinite variation cases: note that 
$\zeta_\infty\! =\!  \inf_{i\in I} \zeta_\infty^i$, where $\zeta_\infty^i$ stands for the explosion time of $\rZ^i$. 
Then, $\bP( \zeta_\infty  \! \geq \!  t )= \exp(-x\rN_\Ppsi (\zeta_\infty \! < \!  t ))$. Thus, $  \rN_\Ppsi (\zeta_\infty  \! < \!  t)= \kappa (t)$ that is the function defined right after (\ref{nonconserv}) which satisfies  
$\int^{\kappa (t)}_0 dr/ (\Ppsi (r))_-= t$. Since $\kappa$ is $C^1$, the law (restricted to $(0, \infty)$) of the explosion time $\zeta_\infty$ under $\rN_\Ppsi$ is diffuse. This implies that $\bP$-a.s.~on $\{ \zeta_\infty \! < \! \infty\}$ 
there exists a unique $i_1\!\in\! I$ such that $\zeta_\infty\!=\! \zeta^{i_1}_\infty$. Then, on $\{ \zeta_\infty \! <\! \infty \}$, we set $\ree= x_{i_1}$ and $M_t = \delta_\ree$, for any $t\! \geq \! \zeta_\infty$. Then, we get 
$ \lim_{t\rightarrow \zeta_\infty -} \lVert  M_t \! -\! \delta_\ree \rVert_{\textrm{var}}\! =\! 0$ and an easy argument on Poisson point measures entails that conditional on $\{ \zeta_\infty \! < \! \infty \}$, $\ree$ is uniformly distributed on $[0, x]$. This completes 
the proof when $\Ppsi$ is of infinite variation type. In the finite variation cases, we argue in the 
same way: namely, by simple computations, one shows that for any $t \! \in \! (0, \infty)$, 
$\#\{j\in J: t_j \leq t, \rZ^j_{t-t_j}\! =\! \infty\}$ is a Poisson r.v.~with mean $x\kappa (t)$; 
it is therefore finite and the times of explosion of the population have diffuse laws:   
this proves that the descendent population of exactly one ancestor explodes strictly before the others, and it implies the desired result in the finite variation cases: the details are left to the reader. \cqfd 
 
\begin{remark}   
\label{afait} 
Note that the above construction of $M$ entails Theorem \ref{mainth} ($i$).  \cq
\end{remark}

\section{Proof of Theorem \ref{mainth}.}
\label{proofsec}

\subsection{Results on Grey martingales.}
\label{Greyresults}

We briefly discuss the limiting laws of Grey martingales (see \cite{Gre74}) associated with CSBP that are involved in describing the asymptotic frequencies of the settlers. Recall from (\ref{cZdef}) and (\ref{cZdef2}) the definition of 
$\cZ_t$: for any $y$ fixed, $t \! \longmapsto \! \cZ_t([0, y])$ is a CSBP($\Ppsi, y$) and for any $t$ fixed, $y\! \longmapsto \! \cZ_t([0, y])$ is a subordinator. Let $\theta \! \in \! (0, \infty)$ and $y\! \in (0, x]$. We assume that $u(-t, \theta)$ is well-defined for any $t\! \in \!(0, \infty)$: namely, we assume that $\kappa (t) \!<\! \theta \!<\! v(t)$, for all $t\! \in \! (0, \infty)$. Recall that (\ref{flow}) extends to negative times. Therefore, 
$t \! \longmapsto \! \exp (-u(-t, \theta) \cZ_t([0, y]) )$ is a $[0, 1]$-valued martingale that a.s.~converges to a limit in $[0, 1]$ denoted by $\exp(-W^\theta _y )$, where $W^\theta_y$ is a $[0, \infty]$-valued random variable. Since $y \! \longmapsto \! u(-t, \theta) \cZ_t([0, y]) $ is a subordinator, 
$y \! \longmapsto \! W^\theta_y$ is a (possibly killed) subordinator. We denote by $\phi_\theta $ its Laplace exponent that has therefore the general L\'evy-Khintchine form: 
$$
 \forall \lambda \in [0, \infty), \quad \phi_\theta (\lambda)= \kappa_\theta + d_\theta \lambda + \int_{(0, \infty)}\!\!\!\! \!\!\! \varrho_\theta (dr) \big( 1-e^{-\lambda r} \big)\, ,
$$
where $\kappa_\theta , d_\theta \! \in \!  [0, \infty)$ and $\int_{(0, \infty)} (1\! \wedge \! r)\,  \varrho_\theta (dr) 
\! <\! \infty$. Note that $\phi_\theta (1)= \theta$, by definition. 
We first consider the behaviour of CSBP when they tend to $\infty$. 
\begin{proposition}
\label{GreyLarge} We assume that $\Ppsi$ is not linear and that $\Ppsi^\prime (0+)\! \in\!  (-\infty, 0)$, which implies that $\Ppsi$ is conservative and $\gamma \! \in \! (0, \infty]$. Let $\theta\! \in \! (0, \gamma)$. Then, $u(-t, \theta)$ is well-defined for all $t \! \in \!  (0, \infty)$ and $\lim_{t\rightarrow \infty} u(-t, \theta)= 0$.  
For any $\theta^\prime \! \in \! (0, \gamma)$ and any $y \! \in\! (0, x]$, we then get $\bP$-a.s.
\begin{equation}\label{ratioW}
 W_y^\theta= R_{\theta^\prime\! \!,  \theta} \,  W^{\theta^\prime}_y \quad \textrm{where} \quad R_{\theta^\prime\! \!, \theta} := \exp \Big( \Ppsi^\prime (0+) 
\int_{\theta^\prime}^{\theta} \!\!  \frac{d\lambda}{\Ppsi (\lambda)} \Big) \; .
 \end{equation}
$W^\theta$ is a conservative subordinator without drift: namely $\kappa_\theta =d_\theta =0$. 
Moreover,  
\begin{equation}\label{phiLarge}  
 \forall \lambda \in (0, \infty), \quad \phi_\theta (\lambda)= u\Big( \frac{{\log \lambda}}{-\Ppsi^\prime (0+)} \,  , \,  \theta \Big) \quad \textrm{and} \quad \varrho_\theta \big( (0, \infty)\big) = \gamma \; .
 \end{equation} 
Thus, if $\gamma <\infty$, $W^\theta$ is a compound Poisson process with jump-rate $\gamma$ and jump-law $\frac{1}{\gamma} \varrho_\theta$ whose Laplace transform is $\lambda\mapsto 1-\frac{1}{\gamma} u( \frac{\log \lambda}{-\Ppsi^\prime (0+)} , \theta)$.  
\end{proposition}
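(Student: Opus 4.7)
The plan is to first verify the preliminaries about $u(-t,\theta)$ and extract the key asymptotic ratio, then to identify the Laplace exponent $\phi_\theta$ via a sandwich argument, and finally to read off the L\'evy data and the compound Poisson description.

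\textbf{Well-definedness and the ratio limit.} Since $\Ppsi$ is conservative we have $\kappa(t)=0<\theta<\gamma\le v(t)$, so $u(-t,\theta)$ is defined for all $t>0$. The ODE $\partial_t u(-t,\theta)=\Ppsi(u(-t,\theta))$ is strictly negative on $(0,\gamma)$, so $t\mapsto u(-t,\theta)$ decreases, necessarily to a zero of $\Ppsi$ in $[0,\theta)$, i.e.~to $0$. Writing the extended integral equation (\ref{integeq}) at $\theta$ and $\theta'$ and subtracting gives
\[ \int_{u(-t,\theta')}^{u(-t,\theta)}\frac{du}{\Ppsi(u)} \;=\; \int_{\theta'}^{\theta}\frac{du}{\Ppsi(u)}. \]
Since $u(-t,\theta),u(-t,\theta')\to 0$ and $1/\Ppsi(u)\sim 1/(\Ppsi'(0+)u)$ near $0$, the left-hand side equals $\Ppsi'(0+)^{-1}\log\bigl(u(-t,\theta)/u(-t,\theta')\bigr)+o(1)$, whence $u(-t,\theta)/u(-t,\theta')\to R_{\theta',\theta}$. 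Multiplying by $\cZ_t([0,y])$ and taking the $\bP$-a.s.~limit on both sides yields (\ref{ratioW}).

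\textbf{Identification of $\phi_\theta$.} By the branching identity (\ref{udef}),
\[ e^{-y\phi_\theta(\lambda)}=\bE\bigl[e^{-\lambda W_y^\theta}\bigr]=\lim_{t\to\infty}\exp\bigl(-y\,u(t,\lambda u(-t,\theta))\bigr), \]
so it remains to compute $L(\lambda):=\lim_t u(t,\lambda u(-t,\theta))$. The flow cocycle (\ref{flow}) gives $u(t,u(-(t+r),\theta))=u(-r,\theta)$, independently of $t$. Applying the same ratio technique to $u(-(t+r),\theta)=u(-r,u(-t,\theta))$ (equivalently, linearising the flow near $0$) yields $u(-(t+r),\theta)/u(-t,\theta)\to e^{r\Ppsi'(0+)}$ as $t\to\infty$. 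Choose $r=\log\lambda/\Ppsi'(0+)$ so that this limit equals $\lambda$; then for every $\varepsilon>0$ and $t$ large enough, $u(-(t+r+\varepsilon),\theta)\le\lambda u(-t,\theta)\le u(-(t+r-\varepsilon),\theta)$. By monotonicity of $u(t,\cdot)$ this sandwiches $u(t,\lambda u(-t,\theta))$ between $u(-(r+\varepsilon),\theta)$ and $u(-(r-\varepsilon),\theta)$, and letting $\varepsilon\to 0$ gives $L(\lambda)=u(-r,\theta)=u(\log\lambda/(-\Ppsi'(0+)),\theta)$, which is the announced formula for $\phi_\theta$.

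\textbf{L\'evy data and compound Poisson case.} From the formula for $\phi_\theta$: $\phi_\theta(0+)=\lim_{s\to-\infty}u(s,\theta)=0$ by the first paragraph, so $\kappa_\theta=0$; and $\lim_{\lambda\to\infty}\phi_\theta(\lambda)=\lim_{s\to\infty}u(s,\theta)=\gamma$ (the largest zero of $\Ppsi$ in $[\theta,\infty]$), giving $\varrho_\theta((0,\infty))=\gamma$ when $\gamma<\infty$. To get $d_\theta=0$ I will use that $\Ppsi$ is non-linear: strict convexity gives $\Ppsi(u)/u>\Ppsi'(0+)$ for $u>0$, so $\partial_s\log(u(s,\theta)e^{\Ppsi'(0+)s})=-\Ppsi(u(s,\theta))/u(s,\theta)+\Ppsi'(0+)<0$, and this derivative stays bounded away from $0$ outside every neighbourhood of $0$ in $u$, forcing $u(s,\theta)e^{\Ppsi'(0+)s}\to 0$, i.e.~$d_\theta=\lim_\lambda\phi_\theta(\lambda)/\lambda=0$. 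When $\gamma<\infty$, combining $\kappa_\theta=d_\theta=0$ with $\varrho_\theta((0,\infty))=\gamma<\infty$ identifies $W^\theta$ as a compound Poisson process with intensity $\gamma$ and jump law $\gamma^{-1}\varrho_\theta$, and the Laplace transform $1-\phi_\theta(\lambda)/\gamma$ follows by direct computation. The main obstacle is the sandwich argument pinning down $L(\lambda)$, because the outer time $t$ and the inner argument $\lambda u(-t,\theta)\to 0$ have to be controlled simultaneously; a secondary subtlety is the verification $d_\theta=0$ in the $\gamma=\infty$ case, where strict convexity of $\Ppsi$ is the crucial input.
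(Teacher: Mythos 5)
Your argument is correct and reaches all the conclusions, but you identify $\phi_\theta$ by a genuinely different mechanism than the paper. The paper exploits (\ref{ratioW}) purely algebraically: the a.s.~scaling $W_y^\theta=R_{\theta',\theta}W_y^{\theta'}$ gives $\phi_\theta(\lambda)=\phi_{\theta'}(\lambda R_{\theta',\theta})$, whence $\phi_\theta(1/R_{\theta',\theta})=\phi_{\theta'}(1)=\theta'$; substituting $\theta'=u(t,\theta)$ (so that $1/R_{\theta',\theta}=e^{-\Ppsi'(0+)t}$ by (\ref{integeq})) yields $\phi_\theta(e^{-\Ppsi'(0+)t})=u(t,\theta)$ with no limit to evaluate. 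You instead return to the definition $W_y^\theta=\lim_t u(-t,\theta)\cZ_t([0,y])$, write $e^{-y\phi_\theta(\lambda)}=\lim_t e^{-y\,u(t,\lambda u(-t,\theta))}$, and pin down the double limit by sandwiching $\lambda u(-t,\theta)$ between $u(-(t+r\pm\varepsilon),\theta)$ and collapsing via the flow cocycle. Both are valid; the paper's route is shorter and avoids the simultaneous control of $t\to\infty$ and $\lambda u(-t,\theta)\to 0$ that your sandwich must manage. Your argument for $d_\theta=0$ when $\gamma=\infty$ (strict convexity, so $\Ppsi(u)/u-\Ppsi'(0+)$ is bounded below away from $0$ on $[\theta,\infty)$) is essentially the paper's observation that $\lim_{u\to\infty}\Ppsi(u)/u=D$ with $\Ppsi'(0+)-D=-\int r\,\pi(dr)<0$, just packaged more directly.

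One small analytical gap in your derivation of (\ref{ratioW}): from $1/\Ppsi(u)\sim 1/(\Ppsi'(0+)u)$ near $0$ you deduce that the correction term is $o(1)$, but the integrand difference $1/\Ppsi(u)-1/(\Ppsi'(0+)u)$ is only $o(1/u)$, not bounded. To conclude you also need $\log\bigl(u(-t,\theta)/u(-t,\theta')\bigr)$ to stay bounded as $t\to\infty$, which does hold (for instance because $\int_{u(-t,\theta')}^{u(-t,\theta)}du/\Ppsi(u)$ is identically the finite constant $\int_{\theta'}^\theta du/\Ppsi(u)$ while $|1/\Ppsi(u)|\geq c/u$ near $0$), but this should be said. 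The paper avoids the issue by expressing the ratio as $\exp\int_{\theta'}^\theta\frac{\Ppsi(u(-t,\lambda))}{u(-t,\lambda)}\frac{d\lambda}{\Ppsi(\lambda)}$ and invoking dominated convergence with the uniform bound $|\Ppsi(u)/u|\leq|\Ppsi'(0+)|$ on $(0,\gamma)$.
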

\begin{proof} Let $\theta \! \in \! (0, \gamma)$ and $t \! \in \! (0, \infty)$. Note that $v(t) \! >\! \gamma$ and 
since $\Ppsi$ is conservative, $\kappa (t)\! = \!0$. Thus, for all $t \! \in \! (0, \infty)$, 
$u(-t, \theta)$ is well-defined. 
Note that 
$\Ppsi$ is negative on $(0, \gamma)$, then, by (\ref{integeq}), 
$\lim_{t\rightarrow \infty} u(-t, \theta) \!= \!0$ and 
$\lim_{t\rightarrow \infty} u(t, \theta)\! = \! \gamma$, even if $\gamma \!=\! \infty$. 
Next, observe that 
\begin{equation}\label{ratiou}  
\frac{u(-t, \theta)}{u(-t, \theta^\prime)}  = 
\exp \Big( \int_{\theta^\prime}^{\theta} \!\! d\lambda \, \partial_\lambda \log (u(-t, \lambda)) \Big) 
= \exp \Big( \int_{\theta^\prime}^{\theta} \!\! \frac{\Ppsi (u(-t, \lambda))}{u(-t, \lambda)} \, \frac{d\lambda}{\Ppsi (\lambda)} \Big) \; .
\end{equation}
This entails (\ref{ratioW}) since $\lim_{\lambda \rightarrow 0} \Ppsi (\lambda)/ \lambda= \Ppsi^\prime (0+)$. Thus, $\phi_{\theta} (1/R_{\theta^\prime\! \!, \theta})= \phi_{\theta^\prime} (1)= \theta^\prime$. Then, take $\theta^\prime= u(t, \theta)$: by (\ref{integeq}), it implies that $\phi_\theta (e^{-\Ppsi^\prime (0+) t})= u(t, \theta)$, for any $t\in \bbR$, which proves the formula for $\phi_\theta$ in (\ref{phiLarge}). Next observe that $\kappa_\theta= \lim_{\lambda \rightarrow 0} \phi_\theta (\lambda)= \lim_{t\rightarrow \infty} u(-t, \theta)= 0$. Namely, $W^\theta$ is conservative. Also note that 
$\lim_{\lambda \rightarrow \infty} \phi_\theta (\lambda)= \lim_{t\rightarrow \infty} u(t, \theta)= \gamma$. Thus, if $\gamma <\infty$, $d_\theta= 0$ and the last part of the proposition holds true. 
 
 We next assume that $\gamma = \infty$. Then, $-\Ppsi$ is the Laplace exponent of a conservative subordinator and we are in the finite variation cases.  
We set 
$A(t):= \log (e^{\Ppsi^\prime (0+) t} u(t, \theta))$ and 
we observe that $\log d_\theta= \lim_{t\rightarrow \infty} A(t)$, by taking 
$\lambda = e^{-\Ppsi^\prime (0+) t}$ in (\ref{phiLarge}). An easy comptutation using (\ref{flow}) entails 
\begin{eqnarray*}  A(t)-\log \theta & = &   \int_0^t \!\!  \big( \Ppsi^\prime (0+)  +  \partial_s  \log u(s, \theta) \big) ds  =  
\int_0^t  \!\! \Big( \Ppsi^\prime (0+) \!-\! \frac{\Ppsi (u(s, \theta))}{u(s, \theta)} \, \Big)   ds  \\ 
& = & t \int_0^1  \!\! \Big( \Ppsi^\prime (0+) \!-\! \frac{\Ppsi (u(st, \theta))}{u(st, \theta)} \, \Big)   ds \; .
\end{eqnarray*}
Recall that $\lim_{\lambda \rightarrow \infty} \Ppsi (\lambda)/\lambda= D$. Then, for any $s\in (0, 1]$, 
$$ \lim_{t\rightarrow \infty}  \Ppsi^\prime (0+) \!-\! \frac{\Ppsi (u(st, \theta))}{u(st, \theta)}= 
\Ppsi^\prime (0+) -D= -\int_{(0, \infty)} r \, \pi (dr) < 0 \, , $$ 
since $\pi\neq 0$. This implies that $\lim_{t\rightarrow \infty} A(t)= -\infty$ and thus $d_\theta= 0$. 
\cqfd 
\end{proof}
We complete this result by the following lemma. 
\begin{lemma}\label{asLarge}
We assume that $\Ppsi$ is not linear and that $\Ppsi^\prime (0+)\! \in\!  (-\infty, 0)$, which implies $\Ppsi$ is conservative and $\gamma \! \in \! (0, \infty]$. Let $\theta\! \in \! (0, \gamma)$. Then, $u(-t, \theta)$ is well-defined for all $t \! \in \!  (0, \infty)$ and $\lim_{t\rightarrow \infty} u(-t, \theta)= 0$.  
Moreover, there exists 
a cadlag subordinator $W^\theta$ whose initial value is $0$ and whose Laplace exponent is $\phi_\theta$ as defined by (\ref{phiLarge}) such that 
$$ \textrm{$\bP$-a.s.} \quad \forall y\in [0, x], \quad \lim_{t\rightarrow \infty} u(-t, \theta) \cZ_t ([0, y]) = W^\theta_y \quad \textrm{and} \quad \lim_{t\rightarrow \infty} u(-t, \theta) \cZ_t (\{y\}) = \Delta W^\theta_y\, , $$
where $ \Delta W^\theta_y$ stands for the jump of $W^\theta$ at $y$. 
 \end{lemma}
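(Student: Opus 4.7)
The well-definedness of $u(-t,\theta)$ for all $t\in(0,\infty)$ and its convergence to $0$ as $t\to\infty$ are already proven in Proposition~\ref{GreyLarge}, so no new argument is needed there. The substance of the lemma is (i)~producing the cadlag subordinator $W^\theta$, (ii)~upgrading the fixed-$y$ Grey convergence to a $\bP$-a.s.\ simultaneous-in-$y$ statement, and (iii)~the analogous statement for jumps.

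My approach is to exploit the Poisson decomposition (\ref{cZdef})/(\ref{cZdef2}) of $\cZ$ cluster by cluster. In the infinite variation case, for each $i\in I$ the process $t\mapsto e^{-u(-t,\theta)\rZ^i_t}$ is a bounded martingale, so by Proposition~\ref{GreyLarge} applied to the individual cluster the limit
\[
L^{\theta,i}:=\lim_{t\to\infty}u(-t,\theta)\rZ^i_t \in [0,\infty)
\]
exists $\bP$-a.s.\ for every $i\in I$; since $I$ is countable, these a.s.\ events intersect in a full-probability event $\Omega_0$. I then set $W^\theta_y:=\sum_{i\in I,\,x_i\leq y}L^{\theta,i}$ on $\Omega_0$: this is automatically non-decreasing and has jumps only at the atom positions~$x_i$. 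The Poisson exponential formula applied to $\ccP$ yields the independent-stationary-increments property of $W^\theta$ in $y$, and evaluating at $y=x$ combined with Proposition~\ref{GreyLarge} applied to $Z_t=\cZ_t([0,x])$ identifies $\bE[e^{-\lambda W^\theta_x}]=e^{-x\phi_\theta(\lambda)}$, so $W^\theta$ is a cadlag subordinator with Laplace exponent $\phi_\theta$. In the finite variation case I run the same construction with $\ccQ$ and separately check that the deterministic dust part $u(-t,\theta)e^{-Dt}\ell([0,y])$ vanishes in the limit, using the asymptotics $u(-t,\theta)\sim\theta e^{\Ppsi'(0+)t}$ (derived from (\ref{integeq})) together with $D-\Ppsi'(0+)=\int r\,\pi(dr)>0$, which is strictly positive under the non-linearity assumption.

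For the simultaneous convergence, I first fix $y\in[0,x]$: writing $u(-t,\theta)\cZ_t([0,y])=\sum_{i:x_i\leq y}u(-t,\theta)\rZ^i_t$ and applying Fatou's lemma to the countable sum on $\Omega_0$ gives $\liminf_{t\to\infty}u(-t,\theta)\cZ_t([0,y])\geq W^\theta_y$. The matching $\limsup\leq W^\theta_y$ is obtained by applying the same Fatou bound to $\cZ_t((y,x])$ and subtracting, using the a.s.\ finite global limit $\lim_{t\to\infty}u(-t,\theta)Z_t=W^\theta_x$ furnished by Proposition~\ref{GreyLarge}. To promote the pointwise convergence to a simultaneous one in $y$, I take a countable dense set $D\subset[0,x]$ with $x\in D$: on a full-probability event the convergence holds throughout $D$, and for $y\notin D$ the monotonicity $\cZ_t([0,y_-])\leq\cZ_t([0,y])\leq\cZ_t([0,y_+])$ with $y_\pm\in D$ tending to $y$, combined with the right-continuity of $W^\theta$ at $y$ and the existence of left-limits there, pins down the common value $W^\theta_y$. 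The jump statement is then immediate from the Poisson construction: $\cZ_t(\{y\})$ vanishes unless $y=x_i$ for some $i$, in which case it equals $\rZ^i_t$, so $u(-t,\theta)\cZ_t(\{y\})\to L^{\theta,i}=\Delta W^\theta_{x_i}$ simultaneously for all $y$ on $\Omega_0$.

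The technical step I expect to be hardest is the $\limsup$ bound: Fatou alone delivers only the $\liminf$ direction, so one must pay for the reverse inequality with the global finiteness input coming from the Grey limit at $y=x$, and in the finite variation case one must in addition dispose of the dust term, which is where the identity $D-\Ppsi'(0+)=\int r\,\pi(dr)$ together with the non-linearity of $\Ppsi$ enters in a crucial way.
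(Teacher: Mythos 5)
Your strategy --- build $W^\theta$ as a sum of per-cluster Grey limits over the Poisson decomposition, compute its Laplace exponent via the exponential formula, get the $\liminf$ bound by Fatou, and close by monotonicity and right-continuity --- is essentially the paper's. However, there is one genuine gap that is the heart of the argument and that you have glossed over.

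Your subtraction argument for the $\limsup$ relies on the a.s.\ identity $\lim_{t\to\infty} u(-t,\theta)Z_t = W^\theta_x$, which you say is ``furnished by Proposition~\ref{GreyLarge}.'' It is not. Proposition~\ref{GreyLarge} tells you that the Grey martingale $u(-t,\theta)\cZ_t([0,x])$ converges a.s.\ to some random variable $W'_x$ and identifies the \emph{law} of $W'_x$ via $\phi_\theta$; it does not identify $W'_x$ with your cluster sum $W^\theta_x$, which is a different random variable a priori. What you actually have at this stage is (a)~$W'_x \geq W^\theta_x$ a.s.\ (Fatou, plus vanishing of the dust term in the finite variation case via $\lim_t u(-t,\theta)e^{-Dt}=0$) and (b)~$W'_x \stackrel{d}{=} W^\theta_x$ (both have Laplace transform $e^{-x\phi_\theta}$). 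The missing step is the elementary but essential observation that stochastic dominance combined with equality in law forces a.s.\ equality: for each rational $c$, $\{W^\theta_x > c\}\subseteq\{W'_x>c\}$ and both events have equal probability, hence the symmetric difference is null; intersecting over $c$ gives $W'_x = W^\theta_x$ a.s. The paper performs exactly this identification (concluding $W'_y=W^\theta_y$ from $W'_y\geq W^\theta_y$ a.s.\ and $W'_y\stackrel{d}{=}W^\theta_y$) before running the rational-approximation / right-continuity step; without it, your subtraction never gets off the ground, and the identification is equally needed at each rational $q$ if you reorganize along the paper's route.

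A secondary, less serious point: the clause ``for each $i\in I$ the process $t\mapsto e^{-u(-t,\theta)\rZ^i_t}$ is a bounded martingale'' is not quite well posed, since an atom of a Poisson process whose intensity $\rN_\Ppsi$ is $\sigma$-finite but not finite does not carry a probability law in the usual sense. The clean statement is that the set of cluster paths for which $\lim_{t}u(-t,\theta)\rZ_t$ fails to exist is $\rN_\Ppsi$-negligible (by the Markov property in Theorem~\ref{cluster}~(c) at a time $s_0>0$ and the ordinary Grey martingale for the CSBP started from $\rZ_{s_0}$), hence a.s.\ no atom of $\ccP$ lies in the exceptional set. The paper sidesteps this by truncating to the a.s.\ finitely many atoms with $\rZ^i_{s_0}>\varepsilon$, establishing the limits there, and then letting $\varepsilon, s_0\to 0$; you should state one of these two arguments. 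On the positive side, once the $y=x$ identification is in place your Fatou bounds on $\cZ_t([0,y])$ and $\cZ_t((y,x])$ hold \emph{simultaneously} for all $y$ on one full-probability event, so your subtraction already delivers simultaneity, and the final rational-approximation / right-continuity step of your plan is in fact redundant (the paper needs it because its $\limsup$ bound passes through a rational $q>y$ rather than through the complement $\cZ_t((y,x])$).
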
 
\begin{proof} We first assume that $\Ppsi$ is of finite variation type. Fix $ \varepsilon, s_0 \! \in\!  (0, \infty)$. Recall from (\ref{defQ}) the definition of $\ccQ$ and observe that $\sum_{j\in J} \un_{\{ t_j \leq s_0, \rZ^j_0 > \varepsilon \}} \delta_{(x_j, t_j, \rZ^j )}
= \sum_{1\leq n \leq N} \delta_{(X_n , T_n, Z^{(n)})} $, where $N$ is a Poisson r.v.~with mean 
$C:=x D^{-1}(1\!-\! e^{-Ds_0}) \pi (( \varepsilon , \infty))$ and conditionally given $N$, the 
variables $X_n$, $T_n$, $Z^{(n)}$, $1\! \leq\!  n\! \leq\!  N$ are independent: the $X_n$ are uniformly distributed on $[0, x]$, the law of $T_n$ is $(1\!-\!e^{-Ds_0})^{-1} De^{-Dt} \un_{[0, s_0]} (t) \ell (dt)$ and 
the processes $Z^{(n)}$ are distributed as CSBP($\Ppsi$) whose entrance law is $ \pi (( \varepsilon , \infty))^{-1}\un_{( \varepsilon, \infty )} (r) \pi (dr)$. When $D=0$, one should replace $(1-e^{-Dt})D^{-1}$ by $t$ in the last two expressions. 
We next observe that $u(-t, \theta) Z^{(n)}_{t-T_n}= u(-(t-T_n), u(-T_n ,\theta)) Z^{(n)}_{t-T_n} \rightarrow V_n$ exists as $t \rightarrow \infty$  and by Proposition \ref{GreyLarge}, 
\begin{equation}\label{VnLapl}  
 \bE \big[ e^{-\lambda V_n}\big] \!=\! \frac{_1}{^{\pi (( \varepsilon , \infty))}}\!\!\int_{( \varepsilon , \infty)}\! \!\!\!\!\! \!\! \!\! \pi (dr)  \bE \big[e^{-r \phi_{u(-T_n, \theta)}(\lambda)} \big] \!= \! x C^{-1}\int_{0}^{s_0}\!\!\!\! dt \, e^{-Dt}\!\! \int_{( \varepsilon , \infty)} \!\!\! \!\!\!\!\! \!\! \pi (dr)  e^{-r \phi_{u(-t, \theta)}(\lambda)}  
 \end{equation}   
As $ \varepsilon \rightarrow 0$ and $s_0 \rightarrow \infty$, this proves that there exists $\Omega_0\in \ccF$ such that $\bP (\Omega_0)= 1$ and on $\Omega_0$, for any $j\in J$, 
$\lim_{t\rightarrow \infty} u(-t, \theta) \cZ_t (\{ x_j\})=  \lim_{t\rightarrow \infty} u(-t, \theta) \rZ^j_{t-t_j}=: \Delta_j$ exists in $[0, \infty)$. Then, on $\Omega_0$, 
for any $y\in [0, x]$, we set $W^\theta_y= \sum_{j\in J} \un_{[0, y]}(x_j) \Delta_j$ and we take $W^\theta$ as the null process on $\Omega \backslash \Omega_0$. 
Clearly, $W^\theta$ is a cadlag subordinator whose initial value is $0$. We next prove that its Laplace exponent is $\phi_\theta$. To that end fix $y\in (0, x]$; by (\ref{VnLapl})
\begin{eqnarray*}   
\bE \Big[ \exp \Big(\! - \! \lambda \sum_{j\in J}\un_{\{ x_j \leq y \, ; \, \rZ^j_0 > \varepsilon\,  ; \,  t_j \leq s_0\}} \Delta_j \Big)\Big]& = &  \bE \Big[ \exp \Big( \!-\! \lambda\sum_{1\leq n\leq N} \un_{\{ X_n \leq y\}} V_n \Big) \Big]  \\ 
& = &  \exp \Big( \! -\!y \!\! \int_{0}^{s_0}\!\!\!\! dt\, e^{-Dt}\!\! \int_{( \varepsilon , \infty)} \!\!\! \!\!\!\!\! \!\! \pi (dr)  \big( 1-e^{-r \phi_{u(-t, \theta)}(\lambda)} \big) \Big) .
\end{eqnarray*}
Let $ \varepsilon \rightarrow 0$ and $s_0\rightarrow \infty$ to get 
\begin{eqnarray} 
\label{integLarge}
-\frac{_1}{^y}\log \bE\big[ e^{-\lambda W^\theta_y }\big] & = & 
\int_{0}^{\infty}\!\!\!\! \! dt \, e^{-Dt}\!\int_{( 0 , \infty)} \!\!\!\! \!\!\!\!\! \!\! \pi (dr)  \big( 1-
e^{-r \phi_{u(-t, \theta)}(\lambda)} \big) \nonumber\\
&  = & 
 \int_{0}^{\infty}\!\! \!\!\! dt\,  e^{-Dt}\big(  D\phi_{u(-t, \theta)}(\lambda) -\Ppsi (\phi_{u(-t, \theta)}(\lambda) )\big) \; .
 \end{eqnarray}
Then, we set $g(t):=e^{-Dt}\phi_{u(-t, \theta)} (\lambda)$. By (\ref{phiLarge}) and (\ref{flow}), 
$g(t)= e^{-Dt} u(-t, \phi_\theta (\lambda))$. Thus, $\partial_t g (t)= e^{-Dt}(\Ppsi (\phi_{u(-t, \theta)}(\lambda))-D\phi_{u(-t, \theta)}(\lambda))$ and to compute (\ref{integLarge}), we need to specify the limit of $g$ as $t$ tends to $\infty$: since $\lim_{t\rightarrow \infty} u(-t, \phi_\theta (\lambda))=0$, 
$$ \partial_t \log g(t)= \frac{\Ppsi(u(-t, \phi_\theta (\lambda)))}{u(-t, \phi_\theta (\lambda))}-D \underset{t\rightarrow \infty}{-\!\!\!-\!\!\!-\!\!\!\longrightarrow} \Ppsi^\prime (0+)-D= -\int_{(0,\infty)} \!\!\!\!\!\!\! \! \pi(dr) \, r <0 $$
which easily implies that $\lim_{t\rightarrow \infty} g(t) \!=\! 0$ and by (\ref{integLarge}), we get 
$\bE[\exp (-\lambda W^\theta_y)] \!=\! \exp (-y \phi_{\theta} (\lambda))$. Namely, the Laplace exponent of $W^\theta$ is $\phi_\theta$. 

From Proposition \ref{GreyLarge}, for any $y\in [0, x]$, we get $\bP$-a.s.~$\lim_{t\rightarrow \infty}u(-t, \theta)\cZ_t([0,y])=:W^\prime_y$, where the random variable $W^\prime_y$ has the same law as $W^\theta_y$.
Next observe that  
$$u(-t, \theta)\cZ_t([0,y])  =  u(-t, \theta) e^{-Dt} y + \sum_{j\in J} \un_{\{ x_j \leq y \}} u(-t, \theta ) 
\cZ_t (\{ x_j\}) \; .$$ 
Recall from above that $\lim_{t\rightarrow \infty} e^{-Dt} u(-t, \theta)=0$. Thus, by Fatou for sums, we get  
$\bP$-a.s.~$W^\prime_y \geq \sum_{j\in J} \un_{\{ x_j \leq y \} } \Delta_j = W^\theta_y$, which implies 
$W^\prime_y = W^\theta_y$. Then, there exists $\Omega_1 \in \ccF$ such that $\bP(\Omega_1)=1$ and on $\Omega_1$, for any $q \! \in \! \bbQ \cap [0, x]$, $\lim_{t\rightarrow \infty} u(-t, \theta) \cZ_t ([0, q])= W^\theta_q$.

  We next work deterministically on $\Omega_2\! =\! \Omega_0\, \cap \, \Omega_1$. First observe that if 
$y\notin \{ x_j; j \!\in \!J\}$, $\cZ_{t} (\{y\})= 0$. Thus, by definition of $W^\theta$, for any $y\in [0,x]$, we get $\lim_{t\rightarrow \infty} u(-t, \theta) \cZ_t (\{y\}) = \Delta W^\theta_y$. Moreover, for any $y\in [0,x)$ and any $q\in \bbQ \cap [0, x]$ such that  $q>y$, we get 
$$ W^\theta_y \leq \liminf_{t\rightarrow \infty}u(-t, \theta) \cZ_t ([0, y]) \leq \limsup_{t\rightarrow \infty} u(-t, \theta) \cZ_t ([0, y]) \leq W^\theta_q ,$$
the first equality being a consequence of Fatou. Since $W^\theta$ is right continuous, by letting $q$ go to $y$ in the previous inequality we get $\lim_{t\rightarrow \infty}  u(-t, \theta)\cZ_t ([0, y])=W^\theta_y$ for any $y\in [0,x]$ on $\Omega_2$, which completes the proof of the lemma when $\Ppsi$ is of finite variation type. 

\medskip

When $\Ppsi$ is of infinite variation type the proof follows the same lines. Fix $ \varepsilon, s_0 \in (0, \infty)$, recall from (\ref{defP}) the definition of $\ccP$ and recall the Markov property in Theorem \ref{cluster}   (c). Then observe that $\sum_{i\in I} \un_{\{  \rZ^i_{s_0} > \varepsilon \}} \delta_{(x_i, \rZ^i_{s_0 +\,  \cdot\, } )}\! =\! \sum_{1 \leq n\leq N} \delta_{(X_n , Z^{(n)})} $, where $N$ is a Poisson random variable with mean 
$C\!:=\! x\, \rN_\Ppsi (\rZ_{s_0} \!> \!\varepsilon)$ and, conditionally on $N$, the variables $X_n$, 
$Z^{(n)}$, $1 \! \leq \!  n\! \leq \! N$, are independent: $X_n$ is uniformly distributed on $[0, x]$ and the processes $Z^{(n)}$ are CSBP($\Ppsi$) whose entrance law is given by $ \rN_\Ppsi (\rZ_{s_0} \!\in\! dr\,  |\, 
 \rZ_{s_0} \!>\! \varepsilon)$. 
Then, note that $u(-t, \theta) Z^{(n)}_{t-s_0}= u(-(t-s_0), u(-s_0 ,\theta)) Z^{(n)}_{t-s_0} \rightarrow V_n$ exists as $t \rightarrow \infty$ 
 and by Proposition \ref{GreyLarge}, $\bE[ \exp (-\lambda V_n)]= \rN_\Ppsi (\exp ( -\phi_{u(-s_0, \theta)} (\lambda ) \rZ_{s_0}) | \rZ_{s_0} > \varepsilon )$. By letting $ \varepsilon$ and $s_0$ go to $0$, 
this proves that there exists $\Omega_0\in \ccF$ such that $\bP (\Omega_0)= 1$ and on $\Omega_0$, for any $i\in I$, 
$\lim_{t\rightarrow \infty} u(-t, \theta) \cZ_t (\{ x_i\})=  \lim_{t\rightarrow \infty} u(-t, \theta) \rZ^i_{t}=: \Delta_i$ exists in $[0, \infty)$. Then, on $\Omega_0$, 
for any $y\in [0, x]$, we set $W^\theta_y= \sum_{i\in I} \un_{[0, y]}(x_i) \Delta_i$ and we take $W^\theta$ as the null process on $\Omega \backslash \Omega_0$. Clearly, $W^\theta$ is a cadlag subordinator whose initial value is $0$ and we prove that its Laplace exponent is 
$\phi_\theta$ as follows. First note that  
\begin{eqnarray}
\label{infdetai}   
\bE \Big[ \exp \Big(\! - \! \lambda \sum_{i\in I}\un_{\{ x_i \leq y \, ; \, \rZ^i_{s_0} > \varepsilon\, \}} \Delta_i\Big)\Big]& = &  \bE \Big[ \exp \Big( \!-\! \lambda\sum_{1\leq n\leq N} \un_{\{ X_n \leq y\}} V_n \Big) \Big]  \nonumber\\ 
& = &  \exp \Big( \!\! -y\, \rN_\Ppsi \big(\un_{\{ \rZ_{s_0} > \varepsilon \}} \big( 1-e^{-\phi_{u(-s_0, \theta)} (\lambda ) \rZ_{s_0} }\big)\big) \Big) .
\end{eqnarray}
By (\ref{phiLarge}) and (\ref{flow}), we get 
$\rN_\Ppsi \big(\ 1-e^{-\phi_{u(-s_0, \theta)} (\lambda ) \rZ_{s_0} }\big)= \phi_\theta (\lambda)$. Then, by letting $ \varepsilon, s_0 \rightarrow 0$ in (\ref{infdetai}), we get $\bE[\exp (-\lambda W^\theta_y)]= \exp (-y \phi_{\theta} (\lambda))$. We next proceed exactly as in the finite variation cases to complete the proof of the lemma. \cqfd
\end{proof}

We next consider the behaviour of finite variation sub-critical CSBP. 
\begin{proposition}
\label{Greysmall} Let $\Ppsi$ be a branching mechanism of finite variation type such that $\Ppsi^\prime (0+) \! \in \! [0, \infty)$. Then, $\Ppsi$ is conservative and persistent, $D \! \in\!  (0, \infty)$, and 
for all $\theta, t \! \in \!(0, \infty)$, $u(-t, \theta)$ is well-defined and $\lim_{t\rightarrow \infty} u(-t, \theta) \! = \! \infty$. For any $\theta , \theta^\prime \! \in \!(0, \infty)$, and any $y\! \in \! (0, x]$, we also get $\bP$-a.s.  
\begin{equation}\label{ratioww}
 W_y^\theta= S_{\theta^\prime\! \!, \theta} \,  W^{\theta^\prime}_y \quad \textrm{where} \quad S_{\theta^\prime\! \!, \theta} := \exp \Big( D
\int_{\theta^\prime}^{\theta} \!\!  \frac{d\lambda}{\Ppsi (\lambda)} \Big) \; .
 \end{equation}
$W^\theta$ is a conservative subordinator. Namely, $\kappa_\theta= 0$. Moreover, 
\begin{equation}\label{phismall}  
 \forall \lambda \in (0, \infty), \quad \phi_\theta (\lambda)= u\Big( \!-\! \frac{{\log \lambda}}{D} \,  , \,  \theta \Big)  \quad \textrm{and} \quad \varrho_\theta \big( (0, \infty)\big) = \pi \big( (0, \infty)\big) /D \; .
 \end{equation} 
The subordinator $W^\theta$ has a positive drift iff $\int_{(0, 1)} \pi (dr) \, r \! \log 1/r < \infty$. In this case,  
\begin{equation}\label{driftform} 
 \log d_\theta = \log \theta \, -\!  \int_\theta^\infty \!\!  \Big( \!\frac{D}{\Ppsi (\lambda)}\! -\! \frac{1}{\lambda} \Big) \, d\lambda \; . 
\end{equation} 
\end{proposition}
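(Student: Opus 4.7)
My plan follows the template of Proposition \ref{GreyLarge}. The preliminaries are routine: convexity of $\Psi$ together with $\Psi(0) \! = \! 0$ and $\Psi'(0+) \! \geq \! 0$ force $\Psi \! \geq \! 0$ on $[0, \infty)$, so $\gamma \! = \! 0$ and (\ref{conserv}) is satisfied vacuously (conservativeness); finite variation together with $\Psi(r)/r \! \to \! D$ as $r \! \to \! \infty$ yields $\int^\infty dr/\Psi(r) \! = \! \infty$, hence persistence via (\ref{persist}). Non-linearity combined with $\Psi \! \geq \! 0$ rules out $D \! = \! 0$ (else $-\Psi$ would be the Laplace exponent of a subordinator, forcing $\Psi \! \leq \! 0$). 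Since $\kappa (t) \! = \! 0$ and $v(t) \! = \! \infty$, $u(-t, \theta)$ is well-defined for every $\theta, t \! > \! 0$; and $u(t, \theta) \! \to \! 0$ as $t \! \to \! \infty$ (because $\gamma \! = \! 0$) translates into $u(-t, \theta) \! \to \! \infty$.

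For (\ref{ratioww}), I would run the identity (\ref{ratiou}) unchanged,
\[
 \frac{u(-t, \theta)}{u(-t, \theta')} = \exp \Big( \int_{\theta'}^{\theta} \frac{\Psi (u(-t, \lambda))}{u(-t, \lambda)} \frac{d\lambda}{\Psi (\lambda)} \Big) ,
\]
and observe that now $u(-t, \lambda) \! \to \! \infty$ for every $\lambda$, so $\Psi (\mu)/\mu \! \to \! D$ (finite variation limit) and dominated convergence in the exponent delivers $S_{\theta', \theta}$. Combined with the almost sure convergence $u(-t, \theta) \cZ_t([0,y]) \! \to \! W_y^\theta$ (supplied by the martingale convergence theorem applied to $\exp(-u(-t,\theta)\cZ_t([0,y]))$ as introduced before Proposition \ref{GreyLarge}), this yields (\ref{ratioww}) and the scaling relation $\phi_\theta(S_{\theta', \theta}) \! = \! \phi_{\theta'}(1) \! = \! \theta'$. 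Taking $\theta' \! = \! u(t, \theta)$ for arbitrary $t \! \in \! \bbR$ (extending (\ref{integeq}) to negative $t$ gives $S_{u(t, \theta), \theta} \! = \! e^{Dt}$) and inverting yields $\phi_\theta (\lambda) \! = \! u(-\log \lambda/D, \theta)$. The vanishing $\kappa_\theta \! = \! 0$ is then immediate: $\phi_\theta(0+) \! = \! \lim_{t \rightarrow \infty} u(t, \theta) \! = \! 0$.

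For $\varrho_\theta((0, \infty)) \! = \! \lim_{\lambda \rightarrow \infty}(\phi_\theta (\lambda) \! - \! d_\theta \lambda)$, I would introduce $f(s) \! := \! u(s, \theta) \! - \! d_\theta e^{-Ds}$ and combine $\partial_s u \! = \! -\Psi (u)$ with the finite-variation identity $Du \! - \! \Psi (u) \! = \! \int \pi(dr)(1\!-\!e^{-ur})$ to obtain $(e^{Ds} f(s))' \! = \! e^{Ds} \int \pi(dr)(1\!-\!e^{-u(s, \theta) r})$. Integrating from $-\infty$ to $s_0$, the boundary term at $-\infty$ vanishes by the very definition $d_\theta \! = \! \lim_{T \rightarrow \infty} e^{-DT} u(-T, \theta)$, yielding $f(s_0) \! = \! e^{-Ds_0} \int_{-\infty}^{s_0} e^{Ds} \int \pi(dr)(1\!-\!e^{-u(s,\theta)r}) ds$. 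Letting $s_0 \! \to \! -\infty$, the inner integral tends (monotonically) to $\pi((0, \infty)) \! \in \! (0, \infty]$, and an elementary asymptotic of the outer Laplace-type integral against $e^{D(s-s_0)}$ delivers $\varrho_\theta ((0, \infty)) \! = \! \pi((0, \infty))/D$ in both regimes.

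The drift formula is obtained by the change of variable $\mu \! = \! u(-t, \theta)$, so that $dt \! = \! d\mu/\Psi (\mu)$ and $e^{-Dt} \! = \! \exp (-D \int_\theta^\mu du/\Psi (u))$, in the identity $\theta \! - \! d_\theta \! = \! \int_0^\infty e^{-Dt}[Du(-t, \theta) \! - \! \Psi (u(-t, \theta))] dt$ (the $s_0 \! = \! 0$ integration of the same ODE as above). Splitting $D/\Psi (u) \! = \! 1/u + [D/\Psi (u) \! - \! 1/u]$ (the bracket being nonnegative) factors the exponential as $\exp(-D \int_\theta^\mu du/\Psi(u)) \! = \! (\theta/\mu) \exp (-I(\mu))$ with $I(\mu) \! := \! \int_\theta^\mu [D/\Psi (u) \! - \! 1/u] du$, and an integration by parts in $\mu$ produces $d_\theta \! = \! \theta \exp(-I(\infty))$, which is (\ref{driftform}) upon taking logs; positivity of $d_\theta$ reduces to $I(\infty) \! < \! \infty$, and a Fubini-plus-substitution argument ($v \! = \! ur$, then splitting $r \! < \! 1$ vs.\ $r \! \geq \! 1$) identifies this with the condition $\int_{(0, 1)} \pi(dr)\, r\log 1/r \! < \! \infty$. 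The main hurdle is controlling the boundary behaviour as $s \! \to \! -\infty$ (where $u \! \to \! \infty$) when $\pi$ is infinite, since several intermediate quantities diverge; the normalization $e^{-Ds_0}$ and the explicit factorization of $h(\mu)$ are precisely what tame those divergences and isolate the finite ingredients $\pi((0, \infty))/D$ and $d_\theta$.
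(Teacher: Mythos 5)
Your proposal is correct, and the skeleton (preliminaries, (\ref{ratioww}) via (\ref{ratiou}) and $\Psi(\mu)/\mu \to D$, the formula $\phi_\theta(\lambda)=u(-\log\lambda/D,\theta)$ via $S_{u(t,\theta),\theta}=e^{Dt}$, the $\log 1/r$ criterion via Fubini and the substitution $v=ur$) coincides with the paper's. Where you diverge is in the computation of the L\'evy mass $\varrho_\theta((0,\infty))$ and, more mildly, the drift. The paper computes $d_\theta$ first by setting $B(t)=\log\bigl(e^{-Dt}u(-t,\theta)\bigr)$, observing $\log d_\theta=\lim_{t\to\infty}B(t)$, and turning $\log\theta-B(t)$ into $\int_\theta^{u(-t,\theta)}(D/\Psi(\lambda)-1/\lambda)\,d\lambda$ by a change of variable; then for $\varrho_\theta((0,\infty))$ it splits into the cases $d_\theta=0$ (where the answer is trivially $\infty$) and $d_\theta>0$ (where it analyses $C(t)=u(-t,\theta)-d_\theta e^{Dt}$ via the asymptotic $C(t)\sim F(u(-t,\theta))$ with $F(x)=x\int_x^\infty(D/\Psi-1/\lambda)\,d\lambda$). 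Your route, by contrast, runs the single ODE $(e^{Ds}f(s))'=e^{Ds}\int\pi(dr)(1-e^{-u(s,\theta)r})$ for $f(s)=u(s,\theta)-d_\theta e^{-Ds}$, integrates it over $(-\infty,s_0]$ using the defining limit of $d_\theta$ to kill the boundary term, and reads off $\varrho_\theta((0,\infty))=\lim_{s_0\to-\infty}f(s_0)=\pi((0,\infty))/D$ by monotone convergence of the convolution against $De^{-Dw}$ --- a genuinely unified argument that treats $\pi((0,\infty))<\infty$ and $\pi((0,\infty))=\infty$ in one stroke, which I find cleaner than the paper's case-split. Your derivation of the drift formula from the $s_0=0$ instance of the same integral identity plus the factorization $\exp(-D\int_\theta^\mu du/\Psi(u))=(\theta/\mu)e^{-I(\mu)}$ and an integration by parts is a valid (if slightly more computational) alternative to the paper's direct evaluation of $\lim_t B(t)$; both produce $\log d_\theta=\log\theta-I(\infty)$.
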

\begin{proof} 
Since $\Ppsi$ is conservative and persistent, $\kappa (t) \!=\! 0$ and $v(t)\! = \! \infty$ and 
$u(-t, \theta)$ is well-defined for any $\theta \! \in \! (0, \infty)$. Moreover, (\ref{integeq}) implies  
$\lim_{t\rightarrow \infty} u(-t, \theta) \! = \! \infty$ and $\lim_{t\rightarrow \infty} u(t, \theta)\! =\! 0$. Recall that 
$\lim_{\lambda \rightarrow \infty} \Ppsi (\lambda) / \lambda= D$. Then, (\ref{ratiou}) entails (\ref{ratioww}). We then argue as in the proof of Proposition \ref{GreyLarge} to prove that $\phi_\theta (e^{-Dt})= u(t, \theta)$ for any $t\in \bbR$, which entails the first part of (\ref{phismall}). Thus, 
$\kappa_\theta = \lim_{\lambda \rightarrow 0} \phi_\theta (\lambda)= \lim_{t\rightarrow \infty} u(t, \theta)=0$
and $W^\theta$ is conservative. 

  We next compute the value of $d_\theta$. To that end, we set $B(t)= \log (e^{-Dt} u(-t, \theta))$ and we observe that $\log d_\theta
= \lim_{t\rightarrow \infty} B(t)$, by taking $\lambda = e^{Dt}$ in (\ref{phismall}). By an easy computation using (\ref{flow}), we get 
\begin{equation}\label{drifttform}  
 \log \theta -\! B(t)\! = \! \!
\int_{-t}^0 \!\! ds \big( D + \partial_s \log u(s, \theta)  \big) \! = \!\! \int_0^t \!\! ds  \Big(D- \frac{\Ppsi (u(-s, \theta))}{u(-s, \theta) } \Big) \!=\!\! \int_{\theta}^{u(-t,\theta)} \!\!\!\! \!\!\!\! \!\!\!\! d \lambda \; \; 
 \Big(\frac{D}{\Ppsi (\lambda)} \! -\! \frac{1}{\lambda}\Big). 
\end{equation}
Now recall that $D-\lambda^{-1}\Ppsi (\lambda)= \int_{(0, \infty)} \pi (dr) (1-e^{-\lambda r} )/ \lambda$. Thus, 
\begin{equation} \label{driftt}
 \log \theta -B(t) =
 \int_{(0, \infty)} \!\! \! \! \! \! \!\!\!\!\! \pi (dr) \!\! \int_\theta^{u(-t, \theta)} \!\!\!\! \!\! \!  \!\!\! d\lambda \;  \frac{\, 1-e^{-\lambda r}}{\lambda \Ppsi (\lambda)} \underset{^{t\rightarrow \infty}}{-\!\!\! \longrightarrow }  \int_{(0, \infty)} \!\! \! \! \! \! \!\!\!\!\!  \pi (dr) \!\! \int_\theta^{\infty} \!\!\!\!  d\lambda \;  \frac{1-e^{-\lambda r}}{\lambda \Ppsi (\lambda)}=:I \; .
\end{equation}
Now observe that $\lambda \mapsto \lambda^{-1}\Ppsi (\lambda)$ is increasing and tends to $D$ as $\lambda \rightarrow \infty$. Thus, $\frac{1}{D}J\leq I \leq \frac{ \theta}{\Ppsi (\theta)} J $ where  
$$ J:= \int_{(0, \infty)} \!\! \! \! \! \! \!\!\!\!\! \pi (dr) \!\! 
\int_\theta^{\infty} \!\!\!\!  d\lambda \;  \frac{1-e^{-\lambda r}}{\lambda^2 } = 
\int_{(0, \infty)} \!\! \! \! \! \! \!\!\!\!\! \pi (dr) \, r \! \int_{\theta r}^{\infty} \!\!\!\!  d\mu \;  \frac{1-e^{-\mu}}{\mu^2 } . $$
Clearly, $J< \infty$ iff $\int_{(0, 1)} \pi (dr) \, r \log 1/r <\infty$, which entails the last point of the proposition. By an easy computation, (\ref{drifttform}) implies (\ref{driftform}).  

  It remains to prove the second equality in (\ref{phismall}). First assume that 
$d_\theta= 0$. 
This implies that $\pi ((0, \infty))= \infty$ and the first part of (\ref{phismall}) entails $\varrho_\theta ((0, \infty))= \lim_{\lambda \rightarrow \infty} \phi_\theta (\lambda)= \lim_{t\rightarrow \infty} u(-t, \theta)= \infty$, which proves the second part of (\ref{phismall}) in this case. We next assume that $d_\theta \!>\! 0$. We set $C(t)= u(-t, \theta) -d_\theta e^{Dt}$. Thus, $\varrho_\theta ((0, \infty))= \lim_{t\rightarrow \infty} C(t)$. By (\ref{drifttform}), we get 
$$ \frac{C(t)}{u(-t, \theta)}=1-\frac{d_\theta}{e^{-Dt} u(-t, \theta)} = 
1-\exp \Big(\! -\!\! \int_{u(-t, \theta)}^\infty \!\!\!\! \!\!\!\! \!\!\!\! d\lambda \,  \big(\frac{_D}{^{\Ppsi (\lambda)}}-\frac{_1}{^{\lambda}} \big) \Big)\,  \sim_{t\rightarrow \infty} \int_{u(-t, \theta)}^\infty  \!\!\!\! \!\!\!\! \!\!\!\! d\lambda \,  
\big(\frac{_D}{^{\Ppsi (\lambda)}}-\frac{_1}{^{\lambda}} \big) . $$
Then, $C(t) \sim_{t\rightarrow \infty} F( u(-t, \theta))$ where $F(x)= x\int_x^\infty \big(\frac{D}{{\Ppsi (\lambda)}}-\frac{1}{{\lambda}}  \big) d\lambda$. We then set $\varphi (\lambda)= 
D\lambda-\Ppsi (\lambda)$ and we observe that $\lim_{\lambda \rightarrow \infty} \varphi (\lambda)= \pi ((0,\infty))$. Thus, 
$$ F(x)= x\int_x^\infty \frac{\varphi (\lambda)}{\lambda \Ppsi (\lambda) } d\lambda= \int_{1}^\infty 
\frac{ \varphi (x\mu) x}{\mu \Ppsi (x\mu) } d\mu\; \underset{^{x\rightarrow \infty}}{-\!\!\! \longrightarrow }\;  \frac{\pi ((0, \infty))}{D} \int_1^\infty \frac{d\mu}{\mu^2}= \frac{\pi ((0, \infty))}{D} , $$
 which implies the second part of (\ref{phismall}). \cqfd 
 \end{proof}

We complete this result by the following lemma. 
\begin{lemma}\label{assmall}
Let $\Ppsi$ be a branching mechanism of finite variation type such that $\Ppsi^\prime (0+) \! \in \! [0, \infty)$. Then, $\Ppsi$ is conservative and persistent, $D \! \in\!  (0, \infty)$, and 
for all $\theta , t\! \in \!(0, \infty)$, $u(-t, \theta)$ is well-defined and $\lim_{t\rightarrow \infty} u(-t, \theta) \! = \! \infty$.
Moreover, there exists 
a cadlag subordinator $W^\theta$ whose initial value is $0$ and whose Laplace exponent is $\phi_\theta$ as defined by (\ref{phismall}) such that 
$$ \textrm{$\bP$-a.s.} \quad \forall y\in [0,x], \quad \lim_{t\rightarrow \infty} u(-t, \theta) \cZ_t ([0, y]) = W^\theta_y \quad \textrm{and} \quad \lim_{t\rightarrow \infty} u(-t, \theta) \cZ_t (\{y\}) = \Delta W^\theta_y\, , $$
where $ \Delta W^\theta_y$ stands for the jump of $W^\theta$ at $y$. 
\end{lemma}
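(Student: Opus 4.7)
The plan is to mimic the finite variation portion of the proof of Lemma \ref{asLarge}, replacing Proposition \ref{GreyLarge} by Proposition \ref{Greysmall} throughout, while carefully accounting for the deterministic dust term $e^{-Dt}\ell(\,\cdot\,\cap[0,x])$ appearing in (\ref{cZdef2}). In the super-critical case this term was asymptotically negligible because $d_\theta=0$, whereas here it contributes precisely the drift $d_\theta$ of $\phi_\theta$.

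Starting from the Poisson decomposition (\ref{defQ}), I would write
\[ u(-t,\theta)\,\cZ_t([0,y]) \;=\; u(-t,\theta)\,e^{-Dt}\, y \;+\; \sum_{j\in J}\un_{\{t_j\le t,\, x_j\le y\}}\, u(-t,\theta)\,\rZ^j_{t-t_j}. \]
The first summand converges deterministically to $d_\theta\,y$ by the drift formula of Proposition \ref{Greysmall}. For each atom, the flow identity (\ref{flow}) extended to negative times gives $u(-t,\theta)\rZ^j_{t-t_j}=u(-(t-t_j),\,u(-t_j,\theta))\,\rZ^j_{t-t_j}$, and Proposition \ref{Greysmall} applied to the CSBP$(\Psi,\rZ^j_0)$ yields an a.s.~limit $\Delta_j$ with conditional Laplace transform $\exp\!\big(-\rZ^j_0\,\phi_{u(-t_j,\theta)}(\lambda)\big)$. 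To secure simultaneous a.s.~convergence for every $j\in J$, I follow the scheme of Lemma \ref{asLarge}: first restrict to the finitely many atoms with $t_j\le s_0$ and $\rZ^j_0>\varepsilon$, then exhaust along countable sequences $\varepsilon\downarrow 0$ and $s_0\uparrow\infty$. On the resulting event of full probability I set
\[ W^\theta_y \;:=\; d_\theta\, y \;+\; \sum_{j\in J}\un_{[0,y]}(x_j)\,\Delta_j, \qquad y\in[0,x], \]
which is manifestly a cadlag non-decreasing process with $W^\theta_0=0$.

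To identify the law of $W^\theta$, I use the exponential formula for Poisson point measures together with the Laplace transform of each $\Delta_j$ to get
\[ -\frac{1}{y}\log \bE\!\big[e^{-\lambda W^\theta_y}\big] \;=\; \lambda\, d_\theta \;+\; \int_0^\infty\! e^{-Dt}\!\int_{(0,\infty)}\!\!\pi(dr)\,\big(1-e^{-r\phi_{u(-t,\theta)}(\lambda)}\big)\,dt. \]
By (\ref{psifvar}) the inner integral equals $D\phi_{u(-t,\theta)}(\lambda)-\Psi(\phi_{u(-t,\theta)}(\lambda))$. Setting $g(t):=e^{-Dt}\phi_{u(-t,\theta)}(\lambda)=e^{-Dt}u(-t-\log\lambda/D,\theta)$ via the first identity of (\ref{phismall}), the flow equation shows that the integrand is exactly $-\partial_t g(t)$, and so the integral telescopes to $g(0)-\lim_{t\to\infty}g(t)=\phi_\theta(\lambda)-\lambda d_\theta$, yielding $\phi_\theta(\lambda)$ as the Laplace exponent of $W^\theta_y/y$. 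To upgrade to the a.s.~uniform-in-$y$ statement, I apply Proposition \ref{Greysmall} to the CSBP$(\Psi,y)$ process $(\cZ_t([0,y]))_{t\ge 0}$ for each fixed $y$ to obtain a pointwise a.s.~limit $W'_y$; Fatou for the Poisson sum together with the dust contribution forces $W'_y\ge W^\theta_y$ a.s., and equality in law upgrades this to equality a.s. Carrying this out on a countable dense subset of $[0,x]$ and invoking the right-continuity of $y\mapsto W^\theta_y$ together with the monotonicity of $y\mapsto\cZ_t([0,y])$ extends the limit to every $y\in[0,x]$, exactly as in Lemma \ref{asLarge}; the jump statement then follows because $\cZ_t(\{y\})$ vanishes off the countable set $\{x_j\}$. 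The main subtlety I anticipate is the telescoping identification of the Laplace exponent: the boundary value $\lambda d_\theta$ at $+\infty$ must cancel the a priori separate drift contribution of the dust term, and this cancellation rests precisely on the explicit formula for $d_\theta$ provided by Proposition \ref{Greysmall}.
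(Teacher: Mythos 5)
Your proposal is correct and follows essentially the same route as the paper. The paper's own proof is a one-liner — ``the proof of Lemma~\ref{asLarge} works verbatim, except that the integral in (\ref{integLarge}) now equals $\phi_\theta(\lambda)-d_\theta\lambda$'' since $e^{-Dt}\phi_\theta(e^{Dt}\lambda)\to d_\theta\lambda$ — and your write-up supplies exactly the missing detail: the dust term $u(-t,\theta)e^{-Dt}y$ now contributes the drift $d_\theta y$, whose Laplace exponent $d_\theta\lambda$ absorbs the deficit left by the atomic part, so that $W^\theta_y = d_\theta y + \sum_{j}\un_{[0,y]}(x_j)\Delta_j$ has Laplace exponent $\phi_\theta$ as required.
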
 
\begin{proof} The proof Lemma \ref{asLarge} works verbatim, except that in (\ref{integLarge})
$$  \int_{0}^{\infty}\!\! \!\!\! dt\,  e^{-Dt}\big(  D\phi_{u(-t, \theta)}(\lambda) -\Ppsi (\phi_{u(-t, \theta)}(\lambda) )\big)= \phi_\theta (\lambda) -d_\theta \lambda \; , $$
which is easy to prove since $e^{-Dt}\phi_\theta (e^{Dt}\lambda) \rightarrow d_\theta \lambda$ as 
$t \rightarrow \infty$. \cqfd 
\end{proof}

\subsection{Proof of Theorem \ref{mainth} \textrm{(\textit{ii-b})},   
\textrm{(\textit{ii-c})} and \textrm{(\textit{iii-b})}.}
\label{noevesec}
We now consider the cases where there is no Eve property. 
Recall that $x\in (0, \infty)$ is fixed and that $\ell$ stands for Lebesgue measure on $\bbR$ or on 
$[0, x]$ according to the context.  
Recall that $\Ppsi$ is not linear and recall the notation $Z_t:=\cZ_t ([0, x])$. 
We first need the following elementary lemma.  
\begin{lemma}
\label{detcvvar} 
For any $t\! \in\! (0, \infty]$, let $m_t \! \in \! \cM_1 ([0, x])$ be of the form 
$m_t \! =\!  a_t   \ell \! +\!  \sum_{y\in S} m_t (\{y\})\delta_y $,  
where $S$ is a fixed countable subset of $[0, x]$ and $a_t \!  \in \!  [0, \infty)$. We assume that for any 
$y \! \in\!  S$, 
$\lim_{t\rightarrow \infty} m_t(\{y\}) \! = \! m_\infty (\{ y\}) $ and 
$\lim_{t\rightarrow \infty} a_t \!= \!a_\infty$. Then, 
$ \lim_{t\rightarrow \infty} \lVert m_t \!-\! m_\infty \rVert_{\mathrm{var}}\! =\! 0$. 
\end{lemma}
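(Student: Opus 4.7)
The plan is to split $m_t - m_\infty$ into its absolutely continuous part and its purely atomic part, bound each part separately in total variation, and reduce the atomic part to a discrete version of Scheff\'e's lemma.

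First, for any Borel set $A \subset [0,x]$, I would write
\[
m_t(A) - m_\infty(A) = (a_t - a_\infty)\ell(A) + \sum_{y \in S \cap A}\bigl(m_t(\{y\}) - m_\infty(\{y\})\bigr),
\]
so that
\[
|m_t(A) - m_\infty(A)| \leq x\,|a_t - a_\infty| + \sum_{y \in S} \bigl|m_t(\{y\}) - m_\infty(\{y\})\bigr|.
\]
Taking the supremum over $A$, it suffices to prove that the right-hand side tends to $0$. The first term tends to $0$ by hypothesis, so the entire argument reduces to showing
\[
\lim_{t \to \infty} \sum_{y \in S} \bigl|m_t(\{y\}) - m_\infty(\{y\})\bigr| = 0.
\]

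To handle this discrete $L^1$ convergence, I would use the fact that $m_t$ and $m_\infty$ are probability measures: since $m_t([0,x]) = 1 = m_\infty([0,x])$, the total atomic masses satisfy
\[
\sum_{y \in S} m_t(\{y\}) = 1 - a_t\,x \quad \underset{t \to \infty}{\longrightarrow} \quad 1 - a_\infty\,x = \sum_{y \in S} m_\infty(\{y\}).
\]
Combined with the pointwise convergence $m_t(\{y\}) \to m_\infty(\{y\})$ for every $y \in S$, this is exactly the hypothesis of Scheff\'e's lemma on the countable measure space $S$ equipped with counting measure: pointwise convergence of nonnegative summable sequences together with convergence of the total sum implies $\ell^1$-convergence. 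The standard proof uses $|a - b| = (a + b) - 2(a \wedge b)$ and Fatou (or dominated convergence) applied to $(a \wedge b)$ to conclude $\sum |m_t(\{y\}) - m_\infty(\{y\})| \to 0$.

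There is no real obstacle; the only point requiring a moment's care is the passage from pointwise convergence of the atomic weights to $\ell^1$-convergence, which is why one needs the probability-measure hypothesis, producing the compatibility of the total atomic masses. Combining the two bounds yields $\lim_{t \to \infty} \lVert m_t - m_\infty \rVert_{\mathrm{var}} = 0$.
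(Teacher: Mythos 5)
Your proof is correct and takes essentially the same approach as the paper: both split $m_t-m_\infty$ into the diffuse part $(a_t-a_\infty)\ell$ and the atomic part over $S$, and both exploit the total-mass identity $\sum_{y\in S}m_t(\{y\})=1-a_tx$ to upgrade pointwise convergence of the atomic weights to $\ell^1$-convergence. The only difference is presentational: you invoke the discrete Scheff\'e lemma by name, whereas the paper re-proves it in place via a finite-truncation $S_\varepsilon$ argument.
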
 
\begin{proof} For all $ \varepsilon \! \in \!(0, \infty)$, there is $S_\varepsilon \! \subset \! S$, finite and 
such that $ \sum_{y\in S\backslash S_\varepsilon} m_\infty (\{ y\}) \! < \! \varepsilon$. Then, for any 
$A \! \subset \! [0, x]$
\begin{eqnarray*}\lvert m_t (A)\!-\!m_\infty (A)\rvert  \!\!\!  \! & \leq  & \!\!\!   
x \, \lvert a_t \!-\!a_\infty \rvert  + \!  \sum_{y\in S_\varepsilon}  \lvert m_t (\{y\}) \!-\! m_\infty (\{ y\}) \rvert  + \! \! \sum_{y\in S\backslash S_\varepsilon}  \!\! m_t  (\{ y\}) + \!\!  \sum_{y\in S\backslash S_\varepsilon}  \!\! 
m_\infty (\{ y\})  \\ 
\!\!\!\!\! &\leq  & \!\!\!  x\,  \lvert a_t \! -\! a_\infty \rvert  + \!\! \sum_{y\in S_\varepsilon}   \lvert m_t (\{ y\}) \! -\! 
m_\infty (\{ y\}) \rvert  +1\!-\! a_t x - \!\! \sum_{y\in S_\varepsilon} \!\!  m_t (\{ y\}) +
\varepsilon .
 \end{eqnarray*}
Thus, 
$$  \limsup_{t\rightarrow \infty} \sup_{A\subset [0, x]} \lvert m_t (A)\!-\!m_\infty (A)\rvert \leq 1\!-\! a_\infty x - \!\! \sum_{y\in S_\varepsilon} \!\!  m_\infty (\{ y\}) +
\varepsilon = \varepsilon+  \!\!  \sum_{y\in S\backslash S_\varepsilon}  \!\! m_\infty (\{ y\}) \leq 2 \varepsilon \, , $$
which implies the desired result. \cqfd \end{proof}

\paragraph{Proof of Theorem \ref{mainth} (\textit{ii-b}) and (\textit{ii-c}).} 
Recall that $B\!=\! \{ \zeta \!=\! \infty \, ; \, \lim_{t\rightarrow \infty} Z_t \!=\! \infty\}$. We assume that $\Ppsi^\prime (0+)\! \in \! (-\infty, 0)$, which implies $\gamma \!\in \!(0, \infty]$ and that $\Ppsi$ is conservative. 
Let $\theta \in (0, \gamma)$ and let $W^\theta$ be a cadlag subordinator as in Lemma \ref{asLarge}. Recall that its Laplace exponent 
is $\phi_\theta$ as defined by (\ref{phiLarge}). 
It is easy to prove that $\bP$-a.s.~$\un_{\{ W^\theta_x>0\}} \!=\! \un_{B}$. 
We now work a.s.~on $B$: it makes sense to set $M_\infty (dr)= dW^\theta_r /W^\theta_x$ that does not depend on $\theta$ as proved by (\ref{ratioW}) in Proposition \ref{GreyLarge}. Note that $M_t= a_t \ell + \sum_{y\in S} M_t (\{ y\}) \delta_y$ either with $a_t \!=\! 0$ and $S=\{ x_i\, ;\,  i\! \in \! I\}$ if $\Ppsi$ is of infinite variation type, or with $a_t \!=\! e^{-Dt}/Z_t $ and $S=\{ x_j\, ;\,  j\! \in \! J\}$ if $\Ppsi$ is of finite variation type. Next note that 
$\{ y \! \in\!  [0, x]: \Delta W^\theta_y \!>\! 0\} \subset S$ and since $W^\theta$ has no drift, we get 
$M_\infty \!=\! \sum_{y\in S} M_\infty (\{ y\}) \, \delta_y $. Then, 
Lemma \ref{asLarge} easily entails that a.s.~on $B$, for any $y\!\in \! S$, $\lim_{t\rightarrow \infty}M_t (\{ y\})\! =\!  M_\infty (\{ y\})$. Next, recall from the proof of Proposition \ref{GreyLarge} that $\lim_{t\rightarrow \infty} u(-t,\theta) e^{-Dt} \!=\!  d_\theta \!=\!  0$, which implies that $\lim_{t\rightarrow \infty} a_t \! = \! 0$. Then, Lemma 
\ref{detcvvar} entails that a.s.~on $B$, $\lim_{t\rightarrow \infty}
\lVert M_t -M_\infty \rVert_{\textrm{var}}= 0$. 

If $\gamma < \infty$, then Proposition \ref{GreyLarge} entails that $W^\theta$ is a compound Poisson process: in this case and on $B$, there are finitely many settlers and conditionally on $B$, the number of settlers is distributed as a Poisson r.v.~with parameter $x\gamma$ conditionned to be non zero, which completes the proof of Theorem \ref{mainth} (\textit{ii-b}). 
If $\gamma \!= \! \infty$, then the same proposition shows that $W^\theta$ has a dense set of jumps. Therefore, a.s.~on $B$ there are a dense countable set of settlers, which completes the proof of Theorem \ref{mainth} (\textit{ii-c}). In both cases, 
the asymptotic frequencies are described by Proposition \ref{GreyLarge} and Lemma \ref{asLarge} \cqfd

\paragraph{Proof of Theorem \ref{mainth} (\textit{iii-b}).} Recall that 
$C\!=\!\{ \zeta \!=\! \infty \, ; \, \lim_{t\rightarrow \infty} Z_t\!=\! 0\}$. We assume that $\Ppsi$ is of finite variation type, which implies that $\Ppsi$ is persistent. Also recall that 
$\bP(C)\!=\! e^{-\gamma x} \! >\!0$. Thus, we also assume that $\gamma \!< 
\! \infty$.  
Then, observe that $\cZ$ under $\bP ( \, \cdot \, | \, C)$ is distributed as the process derived from the 
finite variation sub-critical branching mechanism $\Ppsi (\cdot +\gamma)$. So, 
without loss of generality, we can assume that $\Ppsi$ is of finite variation 
and sub-critical, namely $\Ppsi^\prime (0+) \!\in\! [0,\infty)$, which implies that $\Ppsi$ is conservative and $D \!\in \! (0, \infty)$.  

Let $\theta \! \in \! (0, \infty)$ and let $W^\theta$ be a cadlag subordinator  
as in Lemma \ref{assmall} whose Laplace exponent $\phi_\theta$ is defined by (\ref{phismall}). 
Since $\Ppsi$ is conservative and persistent, it makes sense to set $M_\infty (dr)\! =\! dW^\theta_r /W^\theta_x$ that does not depend on $\theta$ as proved by (\ref{ratioww}) in Proposition \ref{Greysmall}. Note that $M_t \! = \! a_t \ell + \sum_{y\in S} M_t (\{ y\})$ where $a_t \!=\!  e^{-Dt}/Z_t $ and 
$S\!=\!\{ x_j \, ;\,  j\in J\}$, and observe that $\{ y\!\in \! [0, x]: \Delta W^\theta_y \!>\!0\} \subset S$. Recall that $d_\theta$ stands for the (possibly null) drift of $W^\theta$. Then, we get $M_\infty = a_\infty \ell +\sum_{y\in S} M_\infty (\{ y\})\,  \delta_y $, where 
$a_\infty \!=\! d_\theta/W^\theta_x$. 
By Lemma \ref{assmall}, a.s.~for any $y\!\in\! S$, $\lim_{t\rightarrow \infty}M_t (\{ y\})\!=\! M_\infty (\{ y\})$ and recall from the proof of Proposition \ref{GreyLarge} that $\lim_{t\rightarrow \infty} u(-t,\theta) e^{-Dt}\!= \!d_\theta$, which implies that $\lim_{t\rightarrow \infty} a_t= a_\infty$. 
Then, Lemma \ref{detcvvar} entails that a.s.~$\lim_{t\rightarrow \infty}
\lVert M_t -M_\infty \rVert_{\textrm{var}}= 0$. 

If $\pi((0,1)) \! < \! \infty$, then $\pi ((0, \infty)) \! < \!  \infty$ and $\int_{(0, 1)} \pi (dr) \, 
r \log 1/r \!< \!\infty$. 
Proposition \ref{Greysmall} entails that $W^\theta$ has a drift part and finitely many jumps in $[0, x]$:  there is dust and finitely many settlers. More precisely, conditionally given $C$, the number of settlers is distributed as a Poisson r.v.~with parameter $\frac{x}{D} \int_{(0, \infty)} e^{-\gamma r} \pi (dr)$ since 
$e^{-\gamma r}\pi(dr)$ is the L\'evy measure of $\Ppsi( \cdot + \gamma)$. This proves Theorem \ref{mainth} (\textit{iii-b1}).  
If $\pi((0, 1)) \!= \! \infty$ and $\int_{(0, 1)} \pi (dr) \, 
r \log 1/r \!< \! \infty$, Proposition \ref{Greysmall} entails that $W^\theta$ has a drift part and a dense set of 
jumps in $[0, x]$: thus, a.s.~on $C$, there is dust and infinitely many settlers. 
This proves Theorem \ref{mainth} (\textit{iii-b2}).  
Similarly, if $\int_{(0, 1)} \pi (dr) \, 
r \log 1/r \!=\! \infty$, Proposition \ref{Greysmall} entails that a.s.~on $C$, there is no dust 
and there are infinitely many settlers, which proves Theorem \ref{mainth} (\textit{iii-b3}).  
In all cases, conditionally on $C$, 
the asymptotic frequencies are described thanks to Proposition \ref{Greysmall} and Lemma \ref{assmall}
applied to the branching mechanism $\Ppsi(\cdot + \gamma)$.  \cqfd

\subsection{Proof of Theorem \ref{mainth} \textrm{(\textit{ii-a})}  and \textrm{(\textit{iii-a})}.} 
\label{infineve}
\subsubsection{Preliminary lemmas.}
Recall that $x\in (0, \infty)$ is fixed and recall that $\ccM_1([0, x])$ stands for the set of Borel probability measures on $[0, x]$. 
We first recall (without proof) 
the following result -- quite standard -- on weak convergence in $\ccM_1([0, x])$.   
\begin{lemma} \label{cvweak}
For any $t\in [0, \infty)$, let $m_t \! \in \! \ccM_1 ([0, x])$ be such that for all $q\! \in\!  \bbQ\, \cap \, [0, x]$, $\lim_{t\rightarrow \infty} m_t ([0, q])$ exists. Then, there exists $m_\infty \! \in \! 
\ccM_1([0, x])$ such that 
$\lim_{t\rightarrow \infty} m_t  \! = \! m_\infty$ with respect to 
the topology of the weak convergence. 
\end{lemma}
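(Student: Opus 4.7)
The plan is to extract the candidate limit from the rational-indexed data, then use the compactness of $\ccM_1([0, x])$ for the weak topology to rule out multiple subsequential limits.

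First, for every $q\!\in\!\bbQ\cap[0, x]$, set $F(q):=\lim_{t\rightarrow \infty} m_t([0, q])$, which exists by assumption and lies in $[0,1]$; moreover $F$ is non-decreasing on $\bbQ\cap[0, x]$ and $F(q)=1$ for $q=x$ if $x\in\bbQ$ (otherwise use the bound $F(q)\leq 1$). Extend $F$ to a right-continuous non-decreasing function $\widetilde{F}:[0, x]\rightarrow [0,1]$ by setting $\widetilde{F}(y):=\inf\{F(q):q\in\bbQ\cap[0,x],\,q>y\}$ for $y<x$ and $\widetilde{F}(x):=1$. By Stieltjes' construction, $\widetilde{F}$ is the cumulative distribution function of a unique Borel probability measure $m_\infty\!\in\!\ccM_1([0, x])$.

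Next, since $[0,x]$ is compact, $\ccM_1([0, x])$ is compact and metrisable for the topology of weak convergence (Prokhorov's theorem; tightness is automatic). Let $(t_n)_{n\in\bbN}$ be any sequence tending to $\infty$, and let $\widetilde{m}$ be any weak subsequential limit of $(m_{t_n})_{n\in\bbN}$, i.e.~$m_{t_{n_k}}\rightarrow \widetilde{m}$ weakly along some subsequence. By the Portmanteau theorem, for every $q\!\in\!\bbQ\cap[0, x]$ that is a continuity point of $\widetilde{F}_{\widetilde{m}}(y):=\widetilde{m}([0, y])$, we have $\widetilde{m}([0, q])=\lim_{k\rightarrow\infty} m_{t_{n_k}}([0, q])=F(q)=\widetilde{F}(q)$. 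Since $\widetilde{F}_{\widetilde{m}}$ has at most countably many discontinuities and $\bbQ\cap[0, x]$ is dense, the two right-continuous non-decreasing functions $\widetilde{F}_{\widetilde{m}}$ and $\widetilde{F}$ agree on a dense subset of $[0, x]$ and therefore coincide everywhere, so $\widetilde{m}=m_\infty$.

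Thus every subsequential weak limit of $(m_t)_{t\geq 0}$ as $t\!\rightarrow\!\infty$ equals $m_\infty$. Combined with the compactness of $\ccM_1([0, x])$ and the metrisability of the weak topology, this classical argument yields $\lim_{t\rightarrow \infty} m_t=m_\infty$ weakly. The only mild point to watch is the handling of endpoints and possible atoms of $\widetilde{F}$ at rational points, which is taken care of by passing to continuity points in the Portmanteau step; no other obstacle arises. \cqfd
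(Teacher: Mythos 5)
The paper states this lemma ``without proof'' as a standard fact, so there is no paper argument to compare against; your write-up fills that gap with exactly the classical argument one would expect, and it is correct. You construct $m_\infty$ via the right-continuous, non-decreasing extension $\widetilde{F}$ of the rational-indexed limits $F(q)$, then use compactness of $\ccM_1([0,x])$ together with the Portmanteau theorem to show that every subsequential weak limit $\widetilde m$ has cumulative distribution function equal to $\widetilde F$; uniqueness plus compactness plus metrisability then gives convergence of the full net. One small point worth spelling out for a careful reader: the step $F(q)=\widetilde F(q)$ in your chain of equalities is not automatic from the definition of $\widetilde F$ (as an infimum over rationals strictly greater than $q$, $\widetilde F(q)$ could a priori exceed $F(q)$), but it does hold at any rational $q$ that is a continuity point of $\widetilde F_{\widetilde m}$: pick rational continuity points $q'_n\downarrow q$, use $F(q'_n)=\widetilde F_{\widetilde m}(q'_n)\to\widetilde F_{\widetilde m}(q)=F(q)$, and note the infimum in the definition of $\widetilde F(q)$ is realised along this cofinal sequence. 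With that clarification the identification of $\widetilde m$ with $m_\infty$ is watertight, and no other issues arise.
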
 
Recall the definition of $(M_t)_{t\in [0, \infty)}$ from Theorem \ref{construc} and Section \ref{constrMsec}. 
\begin{lemma} \label{Mcvweak}
We assume that $\Ppsi$ is not linear and conservative. Then, there exists a random probability measure $M_\infty$ on $[0, x]$ such that $\bP$-a.s.~$\lim_{t\rightarrow \infty} M_t  \! = \! M_\infty$ with respect to 
the topology of the weak convergence. 
\end{lemma}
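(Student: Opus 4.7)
The plan is to invoke Lemma \ref{cvweak}: since $\bbQ\cap[0,x]$ is countable, it suffices to show that for each fixed $q\in\bbQ\cap[0,x]$, $M_t([0,q])$ has a $\bP$-almost sure limit as $t\to\infty$. Fix such a $q$; by Theorem \ref{construc} applied to the partition $\{[0,q],(q,x]\}$, set $Y_t:=\cZ_t([0,q])$ and $W_t:=\cZ_t((q,x])$, so that $Y$ and $W$ are independent cadlag CSBPs with initial values $q$ and $x-q$, $Z_t=Y_t+W_t$, and $M_t([0,q])=Y_t/Z_t$ on $[0,\zeta)$. The argument then splits according to whether $\Ppsi$ is persistent.

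In the \emph{persistent} case, the key claim is that $Y_t/Z_t$ is a bounded martingale for the natural filtration of $(Y,W)$. Persistence combined with conservativeness forces $Y_t,W_t,Z_t>0$ for all $t\geq 0$, $\bP$-a.s., so the ratio is defined throughout. Using $1/z=\int_0^\infty e^{-sz}\,ds$, Fubini, independence of $Y$ and $W$, and the Laplace transform $\bE_y[e^{-sY_t}]=e^{-yu(t,s)}$ from (\ref{udef}), one computes
\begin{equation*}
\bE_{y,w}\!\left[\frac{Y_t}{Y_t+W_t}\right]=\int_0^\infty y\,\partial_s u(t,s)\,e^{-(y+w)u(t,s)}\,ds=\frac{y}{y+w}\big(1-e^{-(y+w)v(t)}\big),
\end{equation*}
by the change of variable $r=u(t,s)$, using $u(t,0)=\kappa(t)=0$ (conservativeness) and $u(t,\infty)=v(t)$. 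Persistence yields $v(t)=\infty$, so the right-hand side reduces to $y/(y+w)$; combined with the Markov property of $(Y,W)$ this gives $\bE[Y_t/Z_t\mid(Y_r,W_r)_{r\leq s}]=Y_s/Z_s$, and Doob's theorem applied to the bounded martingale $(Y_t/Z_t)_{t\geq 0}$ supplies the $\bP$-a.s.\ convergence.

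In the \emph{non-persistent} case the argument splits along $\zeta_0$. On $\{\zeta_0<\infty\}$, which has probability $e^{-\gamma x}$, the construction of $M$ in Section \ref{constrMsec} forces $M_t=\delta_{\ree}$ for every $t\geq\zeta_0$, so $M_t([0,q])$ is eventually constant. On $\{\zeta_0=\infty\}$: if $\Ppsi^\prime(0+)\geq 0$ then $\gamma=0$ makes this event negligible; otherwise $\Ppsi^\prime(0+)\in(-\infty,0)$ and $\{\zeta_0=\infty\}=\{Z_t\to\infty\}$ almost surely, on which Lemma \ref{asLarge} applied with any $\theta\in(0,\gamma)$ gives $u(-t,\theta)\cZ_t([0,y])\to W^\theta_y$ for every $y\in[0,x]$, $\bP$-a.s. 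Proposition \ref{GreyLarge} yields $\bP(W^\theta_x=0)=e^{-x\phi_\theta(\infty)}=e^{-x\gamma}=\bP(Z_t\to 0)$, while clearly $\{W^\theta_x>0\}\subseteq\{Z_t\not\to 0\}=\{Z_t\to\infty\}$ (since $u(-t,\theta)\to 0$ is bounded and $Z_t\to 0$ would force $W^\theta_x=0$), so these events coincide almost surely; therefore $M_t([0,q])=Y_t/Z_t\to W^\theta_q/W^\theta_x\in[0,1]$, $\bP$-a.s.\ on $\{\zeta_0=\infty\}$.

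The main obstacle is isolating the martingale structure in the persistent case: the displayed identity shows that $\bE_{y,w}[Y_t/Z_t]$ carries a spurious factor $1-e^{-(y+w)v(t)}$ which vanishes to produce the clean martingale relation exactly when persistence holds. Without this reduction one would need a different, more delicate argument, which is precisely what happens in the non-persistent super-critical branch where Lemma \ref{asLarge} is invoked instead.
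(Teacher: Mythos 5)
Your computation
\begin{equation*}
\bE_{y,w}\!\left[\un_{\{Y_t+W_t>0\}}\frac{Y_t}{Y_t+W_t}\right]=\int_0^\infty y\,\partial_s u(t,s)\,e^{-(y+w)u(t,s)}\,ds=\frac{y}{y+w}\bigl(1-e^{-(y+w)v(t)}\bigr)
\end{equation*}
is exactly the identity the paper uses, and in the persistent conservative case your argument is essentially the paper's. The problem is the non-persistent branch. There you split $\{\zeta_0=\infty\}$ into $\Ppsi'(0+)\geq 0$ (negligible) and $\Ppsi'(0+)\in(-\infty,0)$ (Lemma \ref{asLarge}), but this dichotomy silently omits $\Ppsi'(0+)=-\infty$. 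That case is not empty: one can have a conservative, non-persistent $\Ppsi$ with $\Ppsi'(0+)=-\infty$ (take $\pi(dr)=r^{-2}\un_{\{r>1\}}\,dr$ plus $\beta>0$; then $\Ppsi(\lambda)\sim\lambda\log\lambda$ near $0$, which keeps $\int_{0+}dr/(\Ppsi(r))_-=\infty$, while $\Ppsi(\lambda)\sim\beta\lambda^2$ at infinity gives non-persistence). Lemma \ref{asLarge} explicitly requires $\Ppsi'(0+)\in(-\infty,0)$ and does not apply when $\Ppsi'(0+)=-\infty$, so your proof is incomplete precisely on the hardest event $\{Z_t\to\infty\}$ in that regime. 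Indeed, when $\Ppsi'(0+)=-\infty$ there is no Grey-type normalisation, which is why the paper needs the whole of Section \ref{infineve} to handle that case for the stronger total-variation statement.

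The irony is that the fix is already in your display: keeping the indicator throughout and not discarding the factor $1-e^{-(y+w)v(t)}\leq 1$ shows that $t\mapsto\un_{\{Z_t>0\}}M_t([0,q])$ is a nonnegative \emph{super}martingale for every conservative non-linear $\Ppsi$, persistent or not, with no further hypothesis on $\Ppsi'(0+)$. Doob's convergence theorem then gives a.s.\ convergence uniformly in all cases, and on $\{\zeta_0<\infty\}$ the Eve-in-finite-time construction already settles the matter. This is the paper's route, and it eliminates both the case split and the appeal to Lemma \ref{asLarge}.
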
 
\begin{proof} By Lemma \ref{cvweak}, it is sufficient to prove that for any 
$q\! \in\!  \bbQ\, \cap \, [0, x]$, $\bP$-a.s.~$\lim_{t\rightarrow \infty} M_t ([0, q])$ exists. To that end, we use 
a martingale argument: for any $t\in [0, \infty)$, we denote by $\ccG_t$ the sigma-field generated by 
the r.v.~$\cZ_s([0, q])$ and $\cZ_s((q, x])$, where $s$ ranges in $[0, t]$. Recall that 
$Z_t= \cZ_t([0, q])+\cZ_t((q, x])$ and that $(\cZ_t([0, q]))_{t\in [0, \infty)}$ and 
$(\cZ_t((q, x]))_{t\in [0, \infty)}$ are two independent conservative CSBP($\Ppsi$). Then, for any $\lambda, \mu \in (0, \infty)$ and any $t,s\in [0, \infty)$ 
$$ \bE \big[ \exp \big(\! -\! \mu \cZ_{t+s}([0, q]) \!-\! \lambda Z_{t+s}  \big)\,  \big| \, \ccG_t \big]= \exp \big(\!-\! u(s, \lambda \!+\! \mu)\cZ_{t}([0, q]) \!- \!  u(s, \lambda)\cZ_t((q, x])  \big) $$
By differentiating in $\mu=0$, we get 
\begin{equation} \label{cdexpfull}
\bE[  \un_{\{ Z_{t+s} >0\}} \cZ_{t+s}([0, q]) \, e^{-\lambda Z_{t+s} } \, |\,  \ccG_t]=  \un_{\{ Z_{t} >0\}} 
\cZ_{t}([0, q])  \, e^{-u(s, \lambda) Z_t } \,  \partial_\lambda u \, (s, \lambda) \; .
 \end{equation}
By continuity in $\lambda$, (\ref{cdexpfull}) 
holds true $\bP$-a.s.~for all $\lambda\in [0, \infty)$. We integrate (\ref{cdexpfull}) 
in $\lambda$: note that for any $z\in (0, \infty)$, 
$I(z)\! :=\! \int_0^\infty \! d\lambda \,  e^{-u(s, \lambda) z}\,  \partial_\lambda u \, (s, \lambda) \!=\!  
z^{-1} (1\!- \! e^{-v(s) z})$ if $\Ppsi$ is non-persistent (here $v$ is the function defined right after (\ref{nonpersist})) and $I(z)\!=\! z^{-1}$ if $\Ppsi$ is persistent. In both cases, $I(z) \!\leq \!z^{-1}$ and thus we get 
$$ \bE[\un_{\{  Z_{t+s}>0\} } M_{t+s} ([0, q])  \, | \, \ccG_t ]= 
\bE[  \un_{\{ Z_{t+s} >0\}} \frac{_{\cZ_{t+s}([0, q])}}{^{Z_{t+s}} } \, |\,  \ccG_t] \leq 
\un_{\{ Z_{t} >0\}}  \frac{_{\cZ_{t}([0, q])}}{^{Z_{t}} }= \un_{\{  Z_{t} >0\} } M_{t} ([0, q]) .  $$
Then, $t\longmapsto \un_{\{  Z_{t} >0\} } M_{t} ([0, q]) $ is a nonnegative super-martingale: it almost surely converges and Lemma \ref{cvweak} applies on the event $\{ \zeta_0 \!= \! \infty\}$. Since 
we already proved that $M$ has an Eve on the event $\{ \zeta_0 \! < \! \infty\}$, the proof is complete.  \cqfd
\end{proof}
For any $v \in [0, 1)$ and any $t\in (0, \infty]$, we set 
\begin{equation}
\label{invR}
 R^{-1}_t (v) = \inf \big\{ y\in [0, x] \, : \, M_t ([0, y]) > v \, \big\} \; .
\end{equation} 
Let $U, V: \Omega \rightarrow [0, 1)$ be two independent 
uniform r.v.~that are also independent of the Poisson point measures $\ccP$ and $\ccQ$. 
Then, for any $t, s \! \in \! (0, \infty]$, the conditional law of 
$(R^{-1}_t (U), R_s^{-1} (V))$ given $\ccP$ and $\ccQ$ is 
$M_t \!  \otimes \! M_s$. Moreover, Lemma \ref{Mcvweak} and standard arguments entail 
\begin{equation}
\label{asUV}
\textrm{$\bP$-a.s.} \qquad \lim_{t\rightarrow \infty} R^{-1}_t (U)=R^{-1}_\infty (U) \quad \textrm{and} \quad 
\lim_{t\rightarrow \infty} R^{-1}_t (V)=R^{-1}_\infty (V)  . 
\end{equation}
For any $t\! \in \! (0, \infty)$, we recall the definition of the function $v(t)= \lim_{\lambda \rightarrow \infty} u(t, \lambda)$ that is
infinite if $\Ppsi$ is persistent and finite if $\Ppsi$ is non-persistent. 
Recall that $u(-t, \cdot): (\kappa (t), v(t)) \rightarrow (0, \infty)$ is the reciprocal function of $u(t, \cdot)$. It is increasing and one-to-one, which implies that $\lim_{\lambda \rightarrow v(t)} u(-t, \lambda)= \infty$.   
\begin{lemma}
\label{keyL} Let us assume that $\Ppsi$ is conservative. 
Then, for all $t, \theta \!\in \! (0, \infty)$ and all $s \! \in\! [0, \infty)$, 
\begin{equation} 
\label{kieq}
\bE \big[  \un_{\{ R^{-1}_t (U) \neq  R_{t+s}^{-1} (V) \} } \big(1\! -\! e^{-\theta Z_{t+s}} \big) \big]
\! =\!  x^2 \!\! \int_0^{v(t)} \!\!\! \!\!\! dw \,  \Ppsi (w) \, e^{-xw} \, 
 \frac{u(\!-t, w)\!-\! \big( u(\!-t, w) \!-\!u(s, \theta)\big)_+}{\Ppsi (u(\!-t, w))}  ,
\end{equation}
where $(\, \cdot \, )_+$ stands for the positive part function. 
\end{lemma}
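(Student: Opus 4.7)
The plan is to reduce the event $\{R^{-1}_t(U) \neq R^{-1}_{t+s}(V)\}$ to a computable quantity by conditioning on the Poisson point measure underlying the construction of $\cZ$, then to apply Palm calculus together with the cluster measure identities of Theorem \ref{cluster}, and finally to perform a sequence of substitutions to match the right-hand side. I focus on the infinite variation case; the finite variation case follows the same pattern with $\ccQ$ in place of $\ccP$.

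Setting $y_1 = R^{-1}_t(U)$ and $y_2 = R^{-1}_{t+s}(V)$, I would first condition on $\ccP$: since $d(t)=0$ in the infinite variation case (Lemma \ref{assuu}), the measures $M_t$ and $M_{t+s}$ are purely atomic on the common support $\{x_i : i \in I\}$, hence
\[
\bP(y_1 \neq y_2 \mid \ccP) = \frac{1}{Z_t Z_{t+s}}\sum_{i \neq j} \rZ_t^i \, \rZ_{t+s}^j.
\]
To eliminate the random denominators I would use
\[
\frac{1 - e^{-\theta Z_{t+s}}}{Z_t Z_{t+s}} = \int_0^\theta d\mu \int_0^\infty d\xi \, e^{-\xi Z_t - \mu Z_{t+s}},
\]
valid on $\{Z_t,Z_{t+s} \in (0,\infty)\}$, which has full measure by conservativity (the set $\{Z_{t+s}=0\}$ contributes nothing to the integrand). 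Taking expectation and invoking the double Palm formula (\ref{rePalm}) for the Poisson measure $\ccP$ of intensity $\un_{[0,x]}(y)dy \otimes \rN_\Psi(d\rZ)$, the product $e^{-\xi Z_t - \mu Z_{t+s}}$ factorises over atoms, and one obtains
\[
\bE\!\Big[\sum_{i \neq j}\rZ_t^i \rZ_{t+s}^j \, e^{-\xi Z_t - \mu Z_{t+s}}\Big] = x^2 \, A(\xi,\mu) \, B(\xi,\mu) \, e^{-x u(t,\xi+u(s,\mu))},
\]
with $A := \rN_\Psi[\rZ_t e^{-\xi\rZ_t - \mu\rZ_{t+s}}]$, $B := \rN_\Psi[\rZ_{t+s} e^{-\xi\rZ_t - \mu\rZ_{t+s}}]$, and the final factor equal to $\bE[e^{-\xi Z_t - \mu Z_{t+s}}]$, which follows by conditioning at time $t$ and applying the branching property.

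Using the Markov property (c) of the cluster measure together with $\bbE_r[e^{-\mu \rZ_s}] = e^{-r u(s,\mu)}$ and $\bbE_r[\rZ_s e^{-\mu\rZ_s}] = r \, \partial_\lambda u(s,\mu) \, e^{-r u(s,\mu)}$, I reduce $A$ and $B$ to moments in $\rZ_t$ alone, and then differentiate the identity $\rN_\Psi[1-e^{-\alpha \rZ_t}] = u(t,\alpha)$ (property (b)) in $\alpha$ to obtain
\[
A = \partial_\lambda u(t,\xi+u(s,\mu)), \qquad B = \partial_\lambda u(s,\mu) \, \partial_\lambda u(t,\xi+u(s,\mu)).
\]
Inserting these into the double integral and using $\partial_\lambda u(t,\alpha) = \Psi(u(t,\alpha))/\Psi(\alpha)$ (which follows by differentiating (\ref{integeq}) in $\lambda$), I would then perform the successive substitutions $\mu \to \mu' = u(s,\mu)$, $\xi \to \alpha = \xi + \mu'$, and finally $\alpha \to w = u(t,\alpha)$. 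A Fubini swap in the $(\alpha,\mu')$ plane produces the factor $\alpha \wedge u(s,\theta) = \alpha - (\alpha-u(s,\theta))_+$, and the integral collapses exactly onto the right-hand side of (\ref{kieq}) with $u(-t,w)$ in place of $\alpha$.

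The main obstacle is the algebraic bookkeeping of the substitutions combined with justifying the Palm and Fubini interchanges, for which the relevant integrability and boundary-behaviour of $\partial_\lambda u(t,\alpha)$ as $\alpha \to \infty$ come from Section \ref{estimsec}. The finite variation case runs analogously with the Poisson measure $\ccQ$ of intensity $\un_{[0,x]}(y)dy \, e^{-Du}du \otimes \pi(dr)\bbP_r(d\rZ)$: the continuous ``dust'' components $e^{-Dt}\ell$ and $e^{-D(t+s)}\ell$ of $M_t$ and $M_{t+s}$ being uniform on $[0,x]$ contribute nothing to $\bP(y_1 = y_2 \mid \ccQ)$, so only the common atoms indexed by $\{j : t_j \le t\}$ enter the diagonal, and the parallel computation using $\Psi(\lambda) = D\lambda - \int\pi(dr)(1-e^{-r\lambda})$ yields the same right-hand side.
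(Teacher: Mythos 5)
Your proposal is correct and follows essentially the same route as the paper: condition on the Poisson configuration to express $\bP(R^{-1}_t(U)\neq R^{-1}_{t+s}(V)\mid\cdot)$ as a double sum over distinct atoms, apply the double Palm formula, evaluate via the cluster/Markov identities of Theorem \ref{cluster}, and reduce by changes of variable. The only cosmetic difference is that you write $\frac{1-e^{-\theta Z_{t+s}}}{Z_t Z_{t+s}}=\int_0^\theta d\mu\int_0^\infty d\xi\,e^{-\xi Z_t-\mu Z_{t+s}}$ while the paper uses the equivalent representation $\int_0^\infty\int_0^\infty d\lambda_1 d\lambda_2\,\big(e^{-\lambda_1 Z_t-\lambda_2 Z_{t+s}}-e^{-\lambda_1 Z_t-(\lambda_2+\theta)Z_{t+s}}\big)$ and computes $C(0)-C(\theta)$; your decomposition of the finite-variation case into atomic and dust contributions also matches the paper's split $A=A_1+A_2$.
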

\begin{proof} 
We first prove the lemma when $\Ppsi$ is of infinite variation type. Recall from (\ref{defP})
the definition of $\ccP$ and observe that the jumps of the distribution function of the random measure $M_t$ are given by the collection $\rZ^i_{t}/\sum_{k\in I} \rZ^k_{t}, i\in I$. Recall that $R^{-1}_t$ stands for the inverse of this distribution function. If $i\in I$ (resp.~$j\in I$) is the index of the jump in which $R^{-1}_t (U)$ (resp.~$R_{t+s}^{-1} (V)$) falls then $\{R^{-1}_t (U) \neq  R_{t+s}^{-1} (V) \}$ is the event where $i$ and $j$ are distinct. Consequently on the event $\{ Z_{t+s} \!> \!0\}$ we get,
$$ \bE \big[ \un_{\{   R^{-1}_t (U) \neq  R_{t+s}^{-1} (V) \}} \, \big| \, \ccP \, \big]= 
\sum_{\substack{i , j\in I\\i\neq j}} 
\frac{\rZ^i_t  \; \rZ^j_{t+s}}{ \big(\rZ^i_{t} \!+\!  \rZ^j_{t} \!+\! \sum_{k\in I\backslash\{ i,j\}} \rZ^k_{t} \big)\big(\rZ^i_{t+s} \!+\!  \rZ^j_{t+s} \!+\! \sum_{k\in I\backslash\{ i,j\}} \rZ^k_{t+s} \big) } . $$
Hence,
$$\bE \big[ \un_{\{   R^{-1}_t (U) \neq  R_{t+s}^{-1} (V) \}} \big(1\! -\! e^{-\theta Z_{t+s}} \big)  \big| \ccP \big]
\!=\!\! \sum_{\substack{i , j\in I\\i\neq j}}\! 
\frac{\rZ^i_t  \; \rZ^j_{t+s}  \Big(1\! -\! e^{-\theta \big( \rZ^i_{t+s} \!+\!  \rZ^j_{t+s} \!+\! \sum_{k\in I\backslash\{ i,j\}} \rZ^k_{t+s} \big)} \Big)   }{ \big(\rZ^i_{t} \!+\!  \rZ^j_{t} \!+\! \sum_{k\in I\backslash\{ i,j\}}\! \rZ^k_{t} \big)\big(\rZ^i_{t+s} \!+\!  \rZ^j_{t+s} \!+\! \sum_{k\in I\backslash\{ i,j\}}\! \rZ^k_{t+s} \big) } . $$
%
To simplify notation, we denote by $A$ the left member in (\ref{kieq}). By applying formula (\ref{rePalm}), we get 
$$A =  x^2 \!\!  \int \! \rN_\Ppsi (d\rZ) \!\!  \int \!  \rN_\Ppsi (d\rZ^\prime) \;  \bE \! \left[ 
\frac{\un_{\{ \rZ_{t+s} + \rZ^\prime_{t+s}  + Z_{t+s}>0 \}}\rZ_t  \; \rZ^\prime_{t+s} \big(1-e^{-\theta ( \rZ_{t+s} + \rZ^\prime_{t+s}  + Z_{t+s})} \big)}{ ( \rZ_t + \rZ^\prime_t  + Z_t )( \rZ_{t+s} + \rZ^\prime_{t+s}  + Z_{t+s})}\right] .$$
For any $\lambda_1, \lambda_2 \in (0, \infty)$, we then set 
\begin{eqnarray*} 
B( \lambda_1, \lambda_2) &= & \int \! \rN_\Ppsi (d\rZ) \!\! 
  \int \! \rN_\Ppsi (d\rZ^\prime) \; \bE \! \left[ \rZ_t \rZ^\prime_{t+s} e^{-\lambda_1 (\rZ_t + \rZ^\prime_t  + Z_t)} e^{-\lambda_2 (\rZ_{t+s} + \rZ^\prime_{t+s}  + Z_{t+s})} \right] \\
& =&  \rN_\Ppsi 
\big(\rZ_t e^{-\lambda_1 \rZ_t -\lambda_2 \rZ_{t+s}} \big) \, \rN_\Ppsi 
\big(\rZ_{t+s} e^{-\lambda_1 \rZ_t -\lambda_2 \rZ_{t+s}} \big) \, \bE \big[ e^{-\lambda_1 Z_t -\lambda_2Z_{t+s}} \big]  
\end{eqnarray*}
Recall that $\rN_\Ppsi (1-e^{-\lambda \rZ_t})= u(t, \lambda)$ and recall Theorem \ref{cluster} (c). 
Then, we first get 
$$ \rN_\Ppsi 
\big(\rZ_t e^{-\lambda_1 \rZ_t -\lambda_2 \rZ_{t+s}} \big) = \rN_\Ppsi 
\big(\rZ_t e^{-(\lambda_1+u(s, \lambda_2))  \rZ_t } \big)= \partial_\lambda u \, ( t, \lambda_1\!+\!  u(s, \lambda_2)). $$ 
By the same argument we get 
\begin{eqnarray*} 
 \rN_\Ppsi 
\big(\rZ_{t+s} e^{-\lambda_1 \rZ_t -\lambda_2 \rZ_{t+s}} \big) &= & \partial_{\lambda_2}  \rN_\Ppsi 
\big(1-e^{-\lambda_1 \rZ_t -\lambda_2 \rZ_{t+s}} \big)=\partial_{\lambda_2}  \rN_\Ppsi 
\big(1-e^{-(\lambda_1+ u(s, \lambda_2) ) \rZ_t } \big) \\
&= & \partial_\lambda u(s, \lambda_2) \, 
 \partial_\lambda u( t, \lambda_1\!+\! u(s, \lambda_2)). 
\end{eqnarray*} 
This implies that 
\begin{equation} \label{Blamb}
B( \lambda_1, \lambda_2) =\partial_\lambda u(s, \lambda_2) \, 
\big(\partial_\lambda u( t, \lambda_1\!+\! u(s, \lambda_2))\, \big)^2 \, 
e^{-xu(t , \lambda_1\!+ u(s, \lambda_2))} \; . 
 \end{equation}
An easy argument then entails that 
$$ A=   x^2 \! \!  \int_0^\infty \!\!\! \! \int_0^\infty \!\!\! d \lambda_1 d \lambda_2 \, \big( B(\lambda_1, \lambda_2) \!- \! B(\lambda_1, \lambda_2 \!+\! \theta )\big) \; .$$
Set $C(\theta):= 
x^2  \int_0^\infty  \int_0^\infty B(\lambda_1, \lambda_2 \!+\! \theta )\,  d \lambda_1 d \lambda_2$. The previous equality shows that $A= C(0)-C(\theta)$. We recall that 
$v(s)= \lim_{\lambda \rightarrow \infty} u(s, \lambda)$ 
and let us compute $C(\theta)$.  
To that end we use the changes of variable $y= u(s, \lambda_2 \!+ \! \theta)$ and $\lambda = \lambda_1+ y$ to get  
\begin{eqnarray*} 
C(\theta) & = & 
x^2 \! \!  \int_0^\infty  \!\!\!\!   d\lambda_1   \!\!  \int_{u(s, \theta)}^{v(s)} \!\!\! \!\!\!\! \! 
d y\;  \big(\partial_\lambda u(\,  t \, , \,  \lambda_1\!+\! y \,) \big)^2 \, e^{-xu(t, \lambda_1+ y)}.\\
& =& x^2 \! \!  \int_0^\infty \!\! d\lambda_1\!\!  \int_{\lambda_1+ u(s, \theta )}^{\lambda_1 +v(s)} \!\!\! \!\!\! \!\!\!  d \lambda \,    
\big(\partial_\lambda u( \, t\, , \,  \lambda)\, \big)^2 \, e^{-xu(t, \lambda )} .
\end{eqnarray*}
Recall from (\ref{integeq}) that $\partial_\lambda u(t, \lambda)= \Ppsi (u(t, \lambda))/ \Ppsi (\lambda)$ and note that $\Ppsi (\lambda)= \Ppsi (u(-t, u(t, \lambda)))$.  
Then, by the change of variable $w= u(t, \lambda)$, we get 
$$ C(\theta) =  x^2 \! \!  \int_0^\infty \!\! d\lambda_1 
\int_{u(t, \lambda_1+ u(s, \theta))}^{u(t, \lambda_1+ v(s))} \!\!\! \!\!\!  \!\!\! \!\! dw  \,    \frac{\Ppsi (w)}{\Ppsi (u(-t, w))} \,  e^{-xw}\; .
 $$
Thus, 
\begin{eqnarray*} 
A &= & C(0)-C(\theta) =  
x^2 \! \!  \int_0^\infty \!\! d\lambda_1 \int_{u(t, \lambda_1)}^{u(t, \lambda_1+ u(s, \theta))}  \!\!\! \!\!\!  \!\!\! \!\!\!   dw \; \;  \frac{\Ppsi (w)}{\Ppsi (u(-t, w))} 
e^{-xw}  \\
& =&x^2 \! \!  \int_0^{v(t)} \!\! dw \!\int_0^\infty \!\! d\lambda_1 \, \un_{\{ u(t, \lambda_1) \leq w  
\leq  u(t, \lambda_1+ u(s, \theta))\}} \, \frac{\Ppsi (w)}{\Ppsi (u(-t, w))} 
e^{-xw} \\
& =& x^2 \int_0^{v(t)} \!\!\! dw \,  \Ppsi (w) \, e^{-xw} \, 
 \frac{u(\!-t, w)\!-\! \big( u(\!-t, w) \!-\!u(s, \theta)\big)_+}{\Ppsi (u(\!-t, w))}  ,
\end{eqnarray*}
which is the desired result in the infinite variation cases. 

 The proof in the finite variation cases is similar except that $\cZ$ and $M$ are derived from the Poisson point measure $\ccQ$ defined by (\ref{defQ}). Note that $\Ppsi$ is persistent. We moreover assume it to be conservative: thus, $Z_t\! \in\! (0, \infty)$, for any $t\in [0, \infty)$.   
 Let $A$ stand for the left member in (\ref{kieq}). Then, $A=A_1+A_2$ where
\begin{equation*}
A_1\!:=\! \bE\Big[\frac{_1}{^{Z_t  Z_{t+s}}} \sum_{^{j \in J}}\un_{\{t_j \leq t\}}\rZ^j_{t-t_j}(Z_{t+s} \!-\! \un_{\{t_j \leq t+s\}}\rZ^j_{t+s-t_j})(1\!-\!e^{-\theta Z_{t+s}}) \Big]
  , \; A_2 \!:=\! \bE \Big[\frac{_{xe^{-Dt}}}{^{Z_t}}(1\!-\!e^{-\theta Z_{t+s}})\Big]. 
\end{equation*}
$A_1$ corresponds to the event where $U$ falls on a jump of $R_t$, while $A_2$ deals with the event where it falls on the dust. The latter gives
$$ A_2 =  xe^{-Dt} \!\! \int_0^\infty \!\!\! \!\! d\lambda \, \bE \big[ e^{-\lambda Z_t} \!-\! e^{-\lambda Z_t-\theta Z_{s+t} } \big]  =   xe^{-Dt} \!\! \int_0^\infty \!\!\! \!\! d\lambda \, \big(e^{-x\, u(t,\lambda)}-e^{-x\, u(t,\, \lambda+u(s,\theta))}\big) .$$
We next observe that $A_1\!=\! \int_0^\infty \! \int_0^\infty d\lambda_1 d\lambda_2 \big(\tilde{B}(\lambda_1,\lambda_2)-\tilde{B}(\lambda_1,\lambda_2+\theta)\big)$, where for any $\lambda_1,\lambda_2 \!\in \!(0,\infty)$ we have set
\begin{eqnarray} 
\label{Btilde}
\tilde{B}(\lambda_1,\lambda_2) \!\!\!&=&\!\!\! \bE\Big[ e^{-\lambda_1 Z_t-\lambda_2 Z_{t+s}} \!
 \sum_{^{j \in J}}
\un_{\{t_j \leq t\}}\rZ^j_{t-t_j} \Big( xe^{-D(t+s)}+\!\!  \sum_{^{k\in J\backslash \{ j\} } }\! \! \un_{\{t_k \leq t+s\}}\rZ^k_{t+s-t_k} \Big) \Big] \nonumber \\
\!\!\!&=&\!\!\!  \bE\big[Z_{t+s}e^{-\lambda_1 Z_t}e^{-\lambda_2 Z_{t+s}}\big] \, 
x \! \! \int_0^t \!\! e^{-Db} db \!  \int_{(0,\infty)}\!\!\!\!\!\!\!\! \pi(dr)\, \bbE_r\big[ \rZ_{t-b}e^{-\lambda_1 \rZ_{t-b}}e^{-\lambda_2 \rZ_{t+s-b}} \big]. 
\end{eqnarray}
Here we apply Palm formula to derive the second line from the first one. 
The first expectation in (\ref{Btilde}) yields
$$\bE\big[Z_{t+s}e^{-\lambda_1 Z_t}e^{-\lambda_2 Z_{t+s}}\big] = \partial_{\lambda}u(s,\lambda_2)\partial_{\lambda}u(t,\lambda_1+u(s,\lambda_2)) \, x \, e^{-xu(t,\lambda_1+u(s,\lambda_2))} .$$
The second term of the product in (\ref{Btilde}) gives 
\begin{eqnarray*}
&& x\! \int_0^t \!\! e^{-Db}  db \! \int_{(0,\infty)}\!\!\! \!\!\!\! \!\!\! \pi(dr)\, \partial_{\lambda}u \, (t\!-\!b,\lambda_1\!+\!u(s,\lambda_2))  \, re^{-r u(t-b,\lambda_1+u(s,\lambda_2))}\\
&=& x\! \int_0^t \!\! e^{-Db}  db \,  \partial_{\lambda} u \, (t\!-\!b,\lambda_1\!+\! u(s,\lambda_2)) 
\big(D\!-\!\Ppsi^\prime \big( u(t\!-\!b,\lambda_1\!+\!u(s,\lambda_2))\big)\big)\\
&=& x\Big(\partial_{\lambda  } u \, (t,\lambda_1\!+\!u(s,\lambda_2))-e^{-Dt}\Big)
\end{eqnarray*}
Here, to derive the second line from the first one, we use $\int_0^\infty \pi(dr)\, re^{-r\lambda}=D-\Psi'(\lambda)$. To derive the third one from the second one,   
we use the identity 
$\partial_\lambda  u \,  (t, \lambda)=- \Ppsi(\lambda)^{-1} \partial_t u(t, \lambda)$ and we do an integration by part. Recall $B(\lambda_1,\lambda_2)$ from (\ref{Blamb}). By the previous computations we get 
$$\tilde{B}(\lambda_1,\lambda_2)= x^2B(\lambda_1,\lambda_2) - x^2\partial_{\lambda}u(s,\lambda_2)\partial_{\lambda}u(t,\lambda_1+u(s,\lambda_2))e^{-Dt}e^{-xu(t,\lambda_1+u(s,\lambda_2))}$$
Recall that we already proved that 
$x^2\int_0^\infty\!\!\int_0^\infty d\lambda_1 d\lambda_2\big(B(\lambda_1,\lambda_2)-B(\lambda_1,\lambda_2+\theta)\big)$ equals the right member of (\ref{kieq}). So, to complete 
the proof, we set 
$$F(\theta):=\int_0^\infty\!\!\!\int_0^\infty\!\!\! d\lambda_1 \, d\lambda_2 \, \partial_{\lambda}u(s,\lambda_2+\theta) \, \partial_{\lambda}u(t,\lambda_1+u(s,\lambda_2+\theta)) \, e^{-Dt}e^{-xu(t,\lambda_1+u(s,\lambda_2+\theta))}$$
and calculations similar as in the infinite variation case yield $x^2(F(0)-F(\theta))=-A_2$, which entails the desired result in the finite variation cases. 
\cqfd 
\end{proof}
To complete the proof of Theorem \ref{mainth}, we need the following technical lemma whose proof is postponed. 
\begin{lemma}
\label{pntconv} We assume that $\Ppsi$ is not linear. 
Then, $\bP$-a.s.~for all $y\in [0, x]$, $\lim_{t\rightarrow \infty} M_t (\{y\})$ exists. 
\end{lemma}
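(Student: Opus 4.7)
The plan is to split the problem into three pieces: points $y$ outside the (random, countable) set of potential atom locations, points handled by the absorption argument of Theorem~\ref{mainth}(i), and finally, for each individual atom, a bounded supermartingale argument on its mass. First, set $S = \{x_i : i \in I\}$ in the infinite variation case and $S = \{x_j : j \in J\}$ in the finite variation case. By the very construction of $\cZ$ in (\ref{cZdef})--(\ref{cZdef2}), the measure $\cZ_t$ carries atoms only at points of $S$, so $M_t(\{y\}) = 0$ for every $y \in [0,x] \setminus S$ and every $t \in (0,\infty)$, hence the limit is trivially~$0$. Next, on the absorption event $\{\zeta < \infty\}$, the construction of the Eve in Section~\ref{constrMsec} (recorded in Remark~\ref{afait}) shows that $M_t = \delta_\ree$ for every $t$ beyond a finite random time, so $M_t(\{y\})$ is eventually constant for each $y$. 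Since $S$ is almost surely countable, it therefore suffices to prove that, for each fixed index $i \in I$ (respectively $j \in J$), the scalar process $M_t(\{x_i\})$ converges $\bP$-a.s.\ on $\{\zeta = \infty\}$, where $Z_t \in (0,\infty)$ for all $t > 0$.

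Second, the main step is a supermartingale computation. Let $(\ccF_t)$ be the natural filtration of $\ccP$ (resp.~$\ccQ$) restricted to paths up to time~$t$. By Theorem~\ref{cluster}(c) and the independence of disjoint marks of a Poisson point measure, conditionally on $\ccF_t$ the processes $(\rZ^i_{t+s})_{s \geq 0}$ and $(Z_{t+s} - \rZ^i_{t+s})_{s \geq 0}$ are \emph{independent} CSBPs$(\Ppsi)$ started respectively from $\rZ^i_t$ and $Z_t - \rZ^i_t$ (in the finite variation case, once $t \geq t_j$, the deterministic drift part $e^{-D(t+s)}\ell$ is absorbed into the remainder, which is still a CSBP by the branching property). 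Writing $M_t(\{x_i\})\un_{\{Z_t > 0\}} = \rZ^i_t/Z_t\,\un_{\{Z_t > 0\}}$ and using the elementary identity $\frac{a}{a+b}\un_{\{a+b > 0\}} = \int_0^\infty a\,e^{-\lambda(a+b)}\,d\lambda$ together with the Laplace transform (\ref{udef}) and the relation $\bE[\rZ^i_{t+s}\,e^{-\lambda \rZ^i_{t+s}} \,|\, \ccF_t] = \partial_\lambda u(s,\lambda)\,\rZ^i_t\,e^{-\rZ^i_t u(s,\lambda)}$, followed by the change of variable $\mu = u(s,\lambda)$, which maps $(0,\infty)$ onto $(\kappa(s), v(s))$, I obtain
\begin{equation*}
\bE\!\left[M_{t+s}(\{x_i\})\,\un_{\{Z_{t+s} > 0\}} \,\big|\, \ccF_t\right] = M_t(\{x_i\})\,\un_{\{Z_t > 0\}}\,\big(e^{-\kappa(s) Z_t} - e^{-v(s) Z_t}\big).
\end{equation*}
Since the factor in parentheses lies in $[0,1]$, the process $t \mapsto M_t(\{x_i\})\un_{\{Z_t > 0\}}$ is a nonnegative bounded $(\ccF_t)$-supermartingale, and hence converges $\bP$-a.s.\ as $t \to \infty$. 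Intersecting the exceptional null sets over the countable family $i \in I$ (resp.~$j \in J$) and combining with the first paragraph yields the lemma.

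The only delicate point in the argument is the conditional independence decomposition of $\rZ^i$ and its Poisson complement: on $\{\zeta = \infty\}$ this is clean because $Z_t$ remains finite for all~$t$, so all manipulations make sense pointwise; the remaining singular events $\{Z_t \in \{0,\infty\}\}$ are absorbed by the indicator $\un_{\{Z_t > 0\}}$ and by the preliminary reduction to $\{\zeta = \infty\}$ via Theorem~\ref{mainth}(i). No refined estimate on $\Ppsi$ is needed, which is why the statement holds with no further hypothesis beyond non-linearity.
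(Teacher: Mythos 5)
Your proof proposal follows the same conceptual strategy as the paper's: reduce to the non-absorption event via Theorem~\ref{mainth}~(\emph{i}), then establish a bounded nonnegative supermartingale via a Markov/Palm computation, and invoke a.s.\ convergence of supermartingales. The key identity you derive,
$\bE\bigl[M_{t+s}(\{x_i\})\,\un_{\{Z_{t+s}>0\}} \mid \ccF_t\bigr] = M_t(\{x_i\})\,\un_{\{Z_t>0\}}\bigl(e^{-\kappa(s)Z_t}-e^{-v(s)Z_t}\bigr)$,
is exactly the $\theta\to\infty$ limit of (\ref{eemgg}) (with $\kappa(s)=0$ in the conservative case), so the computation is essentially the paper's Lemma~\ref{eemg}.

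However, there is a genuine gap in how you extract the statement for \emph{all} atoms from the supermartingale argument. You write ``for each fixed index $i\in I$'' and then ``intersecting the exceptional null sets over the countable family $i\in I$.'' But $I$ is a \emph{random} index set: the quantifier swap from ``for each $i$, a.s.\ convergence'' to ``a.s., convergence for each $i$'' is not free, and more fundamentally, the phrase ``fix $i\in I$'' does not, by itself, designate a measurable random variable on which to condition or define an adapted process. The paper circumvents this entirely by running the supermartingale argument on the \emph{deterministic} countable family $\cZ^\ee_t([0,q])/Z_t$, $q\in\bbQ\cap[0,x]$, $\ee, s_0$ dyadic, and then recovering each atom mass $M_t(\{X_k\})$ as a rational increment $\cZ^\ee_t((q,q'])/Z_t$. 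To make your version rigorous you would need to supply a measurable, adapted enumeration of the atoms (e.g., via the truncation shells $\{\rZ^i_{s_0}>\ee\}$, exactly the $\cZ^\ee$ construction the paper uses) — at which point you are essentially reproducing the paper's argument.

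Two further, smaller points. First, $\un_{\{Z_t>0\}}$ does \emph{not} exclude the event $\{Z_t=\infty\}$, and your integral identity $\frac{a}{a+b}\un_{\{a+b>0\}}=\int_0^\infty a\,e^{-\lambda(a+b)}\,d\lambda$ fails when $a+b=\infty$. The indicator in your supermartingale should read $\un_{\{0<Z_t<\infty\}}$; alternatively, reduce first (as the paper does) to the conservative case, where $Z_t<\infty$ for all $t$. Second, the conditional independence decomposition of $\rZ^i_{t+\cdot}$ and its Poisson complement given $\ccF_t$, while morally clear, requires the Palm formula applied to a finite Poisson subfamily — it is not a trivial corollary of the Markov property of $Z$ alone (compare the paper's Lemma~\ref{LemmaMarkov}, which only states Markov for $Z$, versus the Palm computation in Lemma~\ref{eemg}). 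You should make this explicit rather than treat it as immediate.
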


\subsubsection{Proof of Theorem \ref{mainth} (\textit{ii-a}).} 
We temporarily admit Lemma \ref{pntconv}. We assume that $\gamma \!>\!0$ and that $\Ppsi$ is conservative. To simplify notation, we denote by $\{ Z\! \rightarrow \! \infty\}$ the event $\{  \lim_{t\rightarrow \infty} Z_t \! =\! \infty \}$. Recall from (\ref{defP}) and  (\ref{defQ}) the definition of the Poisson point measures $\ccP$ and $\ccQ$. 
For any $t \! \in\! (0, \infty)$, we define the following: 
\begin{equation} \label{defPQt}
\ccP_t = \sum_{i\in I}   \delta_{(x_i, \rZ^i_{\cdot \wedge t})} 
\qquad \textrm{and} \qquad
 \ccQ_t= \sum_{j\in J}   \un_{\{ t_j \leq t \} }\delta_{(x_j, t_j,  \rZ^j_{\cdot \wedge (t-t_j)}) }\ .
 \end{equation}
We then define $\ccG_t$ as the sigma-field generated either by 
$\ccP_t$ if $\Ppsi$ is of infinite variation type, or by $\ccQ_t$ if $\Ppsi$ is of finite variation type. The Markov property (see Lemma \ref{LemmaMarkov}) applied to the process $(Z_t,t\geq 0)$ in the filtration $\ccG_t,t\geq 0$ yields
\begin{equation*}
\bP(R^{-1}_t (U) \neq  R_{t+s}^{-1} (V)  \, ; Z \! \rightarrow \! \infty  ) = 
\bE \big[  \un_{\{ R^{-1}_t (U) \neq  R_{t+s}^{-1} (V) \} } \big(1\!-\! e^{-\gamma Z_{t+s}} \big) \big]
\end{equation*}
By Lemma \ref{keyL} and the identity $u(s,\gamma)=\gamma$, we then get  
\begin{equation*}  \bP(R^{-1}_t (U) \neq  R_{t+s}^{-1} (V)  \, ; Z \! \rightarrow \! \infty  ) =
  x^2 \! \! \int_0^{v(t)} \!\! \!\!\! dw \,  \Ppsi (w)  e^{-xw} \, 
\frac{u(\!-t, w)\!-\! \big( u(\!-t, w) \!-\!\gamma \big)_+}{\Ppsi (u(\!-t, w))} =: A(t)  
\end{equation*}
We set $\ree \! = \! R^{-1}_\infty (V)$. Using the Portmanteau theorem as $s \! \rightarrow \!\infty$ on the law of the pair $(R^{-1}_t (U) , R_{t+s}^{-1} (V))$ with the complement of the closed set $\{(y,y):y\in[0,x]\}$, we get $ \bP(R^{-1}_t (U) \! \neq \! \ree  ;Z \! \rightarrow \! \infty  ) \!\leq \! A(t)$. But now observe that 
$\bE[ \un_{\{ R^{-1}_t (U) \neq  \ree   ; Z  \rightarrow \infty \}} \, | \, \ccP , V  ]= 
(1-M_t (\{ \ree\} ))\un_{\{  Z \rightarrow \infty  \}}$. Thus , 
\begin{equation} \label{probeve} 
\bE\big[ (1-M_t (\{ \ree\} ))\un_{\{ Z \rightarrow \infty  \}} \big] \leq A(t)
\end{equation}
We next prove that $\lim_{t\rightarrow \infty} A(t)= 0$. 
First note that for all $w \! \in \! (0, \gamma )$, $w \!<\! v(t)$ and  
$u(-t, w)\! < \! \gamma$, moreover $u(-t, w) \! \downarrow \!0$ 
as $t \! \uparrow \! \infty$. 
Since $\Ppsi^\prime (0+) \!= \! -\infty$, $\lambda / \Ppsi (\lambda) \! \uparrow \! 0$ as 
$  \lambda  \! \downarrow \! 0$. This implies that 
$$  x^2 \int_0^\gamma \!\!\! dw \,  \Ppsi (w) \, e^{-xw} \, 
 \frac{u(\!-t, w)\!-\! \big( u(\!-t, w) \!-\! \gamma \big)_+}{\Ppsi (u(\!-t, w))} = x^2 \int_0^\gamma
  \!\!\! dw \,  \Ppsi (w) \, e^{-xw} \, 
 \frac{u(\!-t, w)}{\Ppsi (u(\!-t, w))} 
 \underset{^{t\rightarrow \infty}}{-\!\!\!-\!\!\! \longrightarrow } 0 . $$
If $\gamma \! = \! \infty$, then, this proves $\lim_{t\rightarrow \infty} A(t)= 0$. Let us assume that 
$\gamma \! < \! \infty$: for all $w \! \in \!  (\gamma, v(t))$, $u(-t, w) \!>\! \gamma$ and we get 
\begin{equation} \label{apresg}
 x^2 \int_\gamma^{v(t)} \!\!\! dw \,  \Ppsi (w) \, e^{-xw} \, 
 \frac{u(\!-t, w)\!-\! \big( u(\!-t, w) \!-\! \gamma \big)_+}{\Ppsi (u(\!-t, w))}= x^2 \int_\gamma^{v(t)} \!\!\! dw \,  \Ppsi (w) \, e^{-xw} \, 
 \frac{\gamma}{\Ppsi (u(\!-t, w))} \; .
  \end{equation}
There are two cases to consider: 
if $\Ppsi$ is persistent, then $v(t) \! =\!  \infty$. Moreover, for all $w\! \in\!  (\gamma, \infty)$, 
$u(-t, w)$ is well-defined and $u(-t, w)\! \uparrow \! \infty$ as $t\! \uparrow \! \infty$, which implies that 
(\ref{apresg}) tends to $0$ as $t\! \rightarrow \! \infty$. If $\Ppsi$ is non-persistent, then 
$v(t)\! < \! \infty$. Observe that $\lim_{t\rightarrow \infty} v(t)\! =\! \gamma$
and use (\ref{ratio}) with $\lambda\! =\! u(-t, w)$ to prove that $w<u(-t,w)$ for any 
$w \!\in \! ( \gamma , v(t)) $. Since $\Ppsi$ increases, we get 
$$  x^2 \int_\gamma^{v(t)} \!\!\! dw \,  \Ppsi (w) \, e^{-xw} \, 
 \frac{\gamma}{\Ppsi (u(\!-t, w))} \leq \gamma x^2 \int_\gamma^{v(t)} \!\!\! dw \,  e^{-xw}  \underset{^{t\rightarrow \infty}}{-\!\!\!-\!\!\! \longrightarrow } 0 . $$
This completes the proof of  $\lim_{t\rightarrow \infty} A(t)= 0$.

By (\ref{probeve}) and Lemma \ref{pntconv}, 
we get $\bP$-a.s.~on $\{  Z \! \rightarrow \! \infty\}$, $M_t(\{ \ree\})\! \rightarrow \!1$. 
Thus, it entails $\lVert M_t \!-\!\delta_\ree \rVert_{\textrm{var}}\! \rightarrow \! 0$ by Lemma \ref{detcvvar}, as $t\! \rightarrow \!\infty$, which implies Theorem \ref{mainth} (\textit{ii-a}).  \cqfd

\subsubsection{Proof of Theorem \ref{mainth} (\textit{iii-a}).} We assume that $\Ppsi$ is persistent, of infinite variation type and such that $\gamma < \infty$. Observe that $\ccP$ under $\bP (\, \cdot \, | \, \lim_{t\rightarrow \infty} Z_t = 0)$ is a Poisson point measure associated with the branching mechanism $\Ppsi (\cdot + \gamma)$ that is sub-critical (and therefore conservative). So the proof of 
Theorem \ref{mainth} (\textit{iii-a}) reduces to the cases of sub-critical persistent branching mechanisms and without loss of generality, we now assume that $\Ppsi$ is so. Thus, 
$\lim_{\theta \rightarrow \infty} u(t, \theta) \!=\! v(t)\!=  \! \infty$. By letting $\theta$ go to $\infty$ in 
Lemma \ref{keyL}, we get 
$$  \bP(R^{-1}_t (U) \neq  R_{t+s}^{-1} (V) )=  x^2 \int_0^\infty 
 \frac{u(-t, w)}{\Ppsi (u(-t, w))} \, \Ppsi (w) \, e^{-xw} \,  dw =: B (t) \; , $$
which does not depend on $s$. Then, set $\ree \! = \! R^{-1}_\infty (V)$. 
By the Portmanteau theorem as $s \! \rightarrow \!\infty$, we get 
$ \bP(R^{-1}_t (U) \! \neq \! \ree   ) \!\leq \! B(t)$. Next observe that $\bE[ \un_{\{ R^{-1}_t (U) \neq  \ree    \}} \, | \, \ccP , V  ]= 
1-M_t (\{ \ree\} ) $. Therefore, 
\begin{equation} \label{reprobeve} 
0 \leq 1-\bE\big[ M_t (\{ \ree\} ) \big] \leq B(t)
\end{equation}
Since $\Ppsi$ is sub-critical and persistent 
for all $w\!\in \!(0, \infty)$, $u(-t, w)$ increases to $\infty$ as $t \! \uparrow \! \infty$.
Moreover, since $\Ppsi$ is of infinite variation type, $\lambda/ \Ppsi (\lambda)$ decreases to $0$ as $\lambda \uparrow \infty$, which implies that  $\lim_{t\rightarrow \infty} B (t) = 0$. By (\ref{reprobeve}) and Lemma \ref{pntconv}, 
we get $\bP$-a.s.~$M_t(\{ \ree\})\! \rightarrow \!1$, and thus $\lVert M_t \!-\!\delta_\ree \rVert_{\textrm{var}}\! \rightarrow \! 0$ by Lemma \ref{detcvvar}, as $t\! \rightarrow \!\infty$, which completes the proof of Theorem \ref{mainth} (\textit{iii-a}). \cqfd

\subsubsection{Proof of Lemma \ref{pntconv}.}
\label{pntconvpf}
 To complete the proof of Theorem \ref{mainth}, it only remains to prove Lemma \ref{pntconv}. We shall proceed by approximation, in several steps. Recall the filtration $\ccG_t,t\geq 0$ introduced below (\ref{defPQt}).

\begin{lemma}\label{LemmaMarkov}
\label{pntMrkv} Assume that $\Ppsi$ is conservative and not linear. 
Then, for all $s, t, \lambda \in [0, \infty)$ 
$$ \textrm{$\bP$-a.s.} \qquad \bE \big[  \, e^{-\lambda Z_{t+s}} \, \big| \, \ccG_t  \, \big]  
= e^{-u(s, \lambda) Z_t } . 
$$
 \end{lemma}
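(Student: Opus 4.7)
The lemma enlarges the natural Markov property of the CSBP $Z$ to the finer filtration $(\ccG_t)$ produced by the Poisson construction, so the strategy is to split $Z_{t+s}$ into pieces according to which atoms of $\ccP$ (or $\ccQ$) contribute, then check that the part measurable with respect to $\ccG_t$ gives exactly $e^{-u(s,\lambda)Z_t}$ while the remaining part gives $1$. I would treat the two regimes separately.

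\textbf{Infinite variation case.} Here $Z_{t+s}=\sum_{i\in I}\rZ^i_{t+s}$ and $\ccG_t=\sigma(\ccP_t)$, so the data $\rZ^i_{\cdot\wedge t}$ is known for every $i\in I$. The key input is Theorem~\ref{cluster}(c): under $\rN_\Ppsi$, conditionally on $\rZ^i_{\cdot\wedge t}$ the shifted trajectory $\rZ^i_{t+\cdot}$ has law $\bbP_{\rZ^i_t}$ (with $\bbP_0$ the Dirac mass at the null path). The Poisson structure of $\ccP$ makes these conditional laws independent across $i\in I$, so by conditional independence and (\ref{udef}),
\begin{equation*}
\bE\bigl[e^{-\lambda Z_{t+s}}\,\bigl|\,\ccG_t\bigr]
= \prod_{i\in I}\bbE_{\rZ^i_t}\bigl[e^{-\lambda\rZ_s}\bigr]
= \prod_{i\in I} e^{-u(s,\lambda)\rZ^i_t}
= e^{-u(s,\lambda)Z_t}.
\end{equation*}
(The product converges $\bP$-a.s.\ since $\Ppsi$ is conservative, so $Z_t<\infty$.)

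\textbf{Finite variation case.} Decompose
\begin{equation*}
Z_{t+s} = e^{-D(t+s)}x + \sum_{j:\,t_j\le t}\rZ^j_{(t-t_j)+s} + \sum_{j:\,t<t_j\le t+s}\rZ^j_{t+s-t_j}.
\end{equation*}
The middle term is handled as in the infinite variation case: given $\ccG_t=\sigma(\ccQ_t)$, each $\rZ^j_{(t-t_j)+s}$ is, by the ordinary Markov property of CSBP$(\Ppsi)$ under $\bbP_r$, conditionally distributed as $\rZ_s$ under $\bbP_{\rZ^j_{t-t_j}}$, and these are conditionally independent, contributing $\exp(-u(s,\lambda)\sum_{j:\,t_j\le t}\rZ^j_{t-t_j})$. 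The last sum is driven by the restriction of $\ccQ$ to $[0,x]\times(t,t+s]\times\bbD$, which is independent of $\ccG_t$; by the exponential formula its Laplace transform at $\lambda$ is $\exp(-x\,J)$ with
\begin{equation*}
J = \int_t^{t+s}\!\!\! e^{-D\tau}\,d\tau\int_{(0,\infty)}\!\!\!\pi(dr)\bigl(1-e^{-r u(t+s-\tau,\lambda)}\bigr).
\end{equation*}

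\textbf{Where the work is.} The only genuine computation is evaluating $J$ and checking that, combined with the deterministic dust term $e^{-D(t+s)}x$, it produces exactly $e^{-u(s,\lambda)e^{-Dt}x}$. Using (\ref{psifvar}) in the form $\int\pi(dr)(1-e^{-\mu r})=D\mu-\Ppsi(\mu)$ and the ODE $\partial_\sigma u(\sigma,\lambda)=-\Ppsi(u(\sigma,\lambda))$, the substitution $\sigma=t+s-\tau$ gives
\begin{equation*}
J = e^{-D(t+s)}\!\int_0^s\!\!\partial_\sigma\bigl(e^{D\sigma}u(\sigma,\lambda)\bigr)\,d\sigma
= e^{-Dt}u(s,\lambda)-e^{-D(t+s)}\lambda,
\end{equation*}
so the $e^{-D(t+s)}\lambda x$ factor cancels the contribution of the dust, leaving $e^{-u(s,\lambda)\,e^{-Dt}x}$. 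Multiplying this with $\exp(-u(s,\lambda)\sum_{j:\,t_j\le t}\rZ^j_{t-t_j})$ yields $e^{-u(s,\lambda)Z_t}$, using the definition of $Z_t$ from (\ref{cZdef2}). The main obstacle is this telescoping identity for $J$; everything else is essentially bookkeeping with the independence properties of Poisson point measures.
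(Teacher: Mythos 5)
Your proposal follows the same decomposition and the same telescoping computation as the paper, and the formula $J = e^{-Dt}u(s,\lambda) - e^{-D(t+s)}\lambda$ matches the paper's computation exactly. The one place where the paper does real work that you compress into a single phrase is the conditional independence ``across $i\in I$'' (and across $j$ with $t_j\le t$ in the finite variation case). This is not automatic: the intensity $\rN_\Ppsi$ (respectively $\pi$, when $\pi((0,1))=\infty$) has infinite total mass, so $\ccP_t$ has infinitely many atoms, and the assertion that the marks $\rZ^i_{t+\cdot}$ are conditionally independent given $\ccP_t$ with laws $\bbP_{\rZ^i_t}$ is a disintegration statement for $\sigma$-finite Poisson point measures that needs a proof or a reference. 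The paper handles it by first truncating to the finitely many atoms with $\rZ^i_{s_0}>\varepsilon$ (infinite variation) or $\rZ^j_0>\varepsilon$ (finite variation), for which the point measure is a Poissonized i.i.d.\ family and the conditional computation is elementary via $Q_{s_0,\varepsilon}=\rN_\Ppsi(\cdot\,|\,\rZ_{s_0}>\varepsilon)$; one then lets $\varepsilon\to 0$ using independence of the discarded atoms and dominated convergence. If you cite the disintegration/conditional-marking theorem for $\sigma$-finite Poisson random measures, your argument closes as written; if you want a self-contained proof, you should insert the truncation step, since it is precisely there that the ``obvious'' conditional independence is actually established.
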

\begin{proof}  
We first consider the infinite variation cases. We fix 
$s_0, \varepsilon \in (0, \infty)$. For any $t\in (s_0, \infty)$, we set 
\begin{equation}
\label{Pee}
\ccP^{>\ee}_t \!= \! \sum_{i\in I} \un_{\{ \rZ^i_{s_0} \! > \ee \}} 
\delta_{(x_i \, , \,  \rZ^i_{\, \cdot \, \wedge t })}
\quad \textrm{and} \quad  
\cZ^\ee_t \!= \! \sum_{i\in I} \un_{\{ \rZ^i_{s_0} \! > \ee \}} \rZ^i_{t} \delta_{x_i} .  
\end{equation}
Since $t\! >\! s_0$, and by monotone convergence for sums, $\lim_{ \varepsilon \rightarrow 0} \cZ^{ \varepsilon}_{t+s} ([0, x]) \!= \! Z_{t+s}$. 
Then, observe 
that $\cZ_{t+s}^{\ee}$ is independent from $\ccP_t \!- \!  \ccP^{>\ee}_t$. 
Thus, $\bP$-a.s.~$ \bE [   e^{-\lambda Z_{t+s}}  |  \ccG_t  ]  
=\lim_{ \varepsilon \rightarrow 0} \bE [   e^{-\lambda \cZ^\ee_{t+s} ([0, x])}  |  \ccP^{>\ee}_t  ]$.

  Next, note that $\ccP^{>\ee}_t $ is a Poisson point measure whose law is specified as follows. 
By Theorem \ref{cluster} (b) and Lemma \ref{nufi}, first note that $\rN_\Ppsi (\rZ_{s_0} \!> \! \ee)\! = \! \nu_{s_0} ((\ee, \infty]) \! \in\!  (0, \infty)$. Then, 
$Q_{s_0, \ee} \!= \!  \rN_\Ppsi (\, \cdot \, | \, \rZ_{s_0} \!> \! \ee)$ is 
a well-defined probability on $\bbD ([0, \infty), [0, \infty])$. 
Theorem \ref{cluster} (c) easily entails that 
\begin{equation}    
\label{redu2}
\textrm{$Q_{s_0, \ee} $-a.s.}\qquad Q_{s_0, \ee} \big[ \, e^{-\lambda \rZ_{t+s}} \, \big| \, 
\rZ_{\, \cdot \, \wedge t} \,  \big] = e^{-u(s, \lambda) \rZ_t }  \; .
\end{equation}
Next, note that $\ccP^{>\ee}_t$ can be written as $\sum_{1\leq k \leq S} \delta_{(X_k, Y^k_{\, \cdot \, \wedge t})}$, 
where $(X_k, Y^k )$, $k\!\geq \!1$, is an i.i.d.~sequence of 
$[0, x] \! \times \!  \bbD([0, \infty), [0, \infty])$-valued r.v.~whose law is 
$x^{-1}\un_{[0, x]}(y)  \ell (dy) \, Q_{s_0, \ee} (d\rZ)$ and where $S$ is a Poisson r.v.~with mean $x\nu_{s_0} ((\ee, \infty]) $ that is independent from 
the $(X_k, Y^k)_{k\geq 1}$. By an easy argument, we derive from (\ref{redu2}) that 
$\bP$-a.s.~$\bE [  e^{-\lambda \cZ^\ee_{t+s} ([0, x])} |  \ccP^{>\ee}_t ] = 
 e^{-u(s, \lambda) \cZ^\ee_{t} ([0, x])}$, which entails the desired result as $ \varepsilon \rightarrow 0$. 

\medskip

In the finite variation cases,  
we also proceed by approximation: for any $ \varepsilon \in (0, \infty)$, we set 
$$\ccQ^{>\ee}_t \!= \! \sum_{j\in J} \un_{\{ t_j \leq t \, ; \,   \rZ^j_{0} \! > \ee \}} 
\delta_{(x_j \, , \,  t_j \, , \, \rZ^j_{\, \cdot \, \wedge (t-t_j) })} , \quad 
\cZ^\ee_t \!= \! \sum_{j\in J} \un_{\{  t_j \leq t \, ; \,  \rZ^j_{0} \! > \ee \} } \rZ^j_{t-t_j} \delta_{x_j} \quad \textrm{and} \quad Z^*_t= Z_t -xe^{-Dt} . $$
Then, note that 
$\lim_{ \varepsilon \rightarrow 0} \cZ^{ \varepsilon}_{t+s} ([0, x]) \!= \! Z^*_{t+s}$ and  
observe that $\cZ_{t+s}^{\ee}$ is independent from $\ccQ_t \!- \!  \ccQ^{>\ee}_t$. 
Thus, $\bP$-a.s.~$ \bE [   e^{-\lambda Z^*_{t+s}}  |  \ccG_t  ]  
=\lim_{ \varepsilon \rightarrow 0} \bE [   e^{-\lambda \cZ^\ee_{t+s} ([0, x])}  |  \ccQ^{>\ee}_t  ]$.  
Next, note that $\ccQ^{>\ee}_t $ is a Poisson point measure that can be written as 
$\sum_{1\leq k \leq S} \delta_{(X_k, T_k, Y^k_{\, \cdot \, \wedge (t-T_k)})}$ where 
$(X_k, T_k, Y^k )_{k \geq 1}$, is an i.i.d.~sequence of 
$[0, x] \! \times \! [0,t] \! \times \!  \bbD([0, \infty), [0, \infty])$-valued r.v.~whose law is 
$x^{-1}\un_{[0, x]}(y)  \ell (dy) \, (1-e^{-Dt})^{-1}De^{-Ds} \ell (ds) \, Q_{\ee} (d\rZ)$ where
$$ Q_{\ee}(d\rZ ) :=  \frac{_1}{^{\pi((\varepsilon,\infty))}} 
\int_{(\varepsilon,\infty)} \!\!\!\!\!\!\!  \pi(dr) \, \bbP_r( d\rZ) \; $$
and $S$ is an independent Poisson r.v.~with mean 
$x(1\!-\!e^{-Dt})D^{-1}\pi ((\ee, \infty)) $. When $D\!=\!0$, one should replace $(1\!-\!e^{-Dt})D^{-1}$ by $t$ in the last two expressions. Note that the Markov property applies under $Q_\ee$. Namely, $Q_\ee$-a.s.~$ Q_{\ee}[  e^{-\lambda \rZ_{t+s}}  |  
\rZ_{\, \cdot \, \wedge t} ] = e^{-u(s, \lambda) \rZ_t }$. This implies $\bP$-a.s.~the following
\begin{equation} \label{soust}
\bE \Big[  \, e^{-\lambda \sum_{j\in J} \un_{\{t_j \leq t\, , \, \rZ^j_{0} \! > \ee \}} \rZ^j_{t+s-t_j}} \, \big| \, 
\ccQ^{>\ee}_t  \, \Big]  =  
 e^{-u(s, \lambda) \sum_{j\in J} \un_{\{t_j \leq t\, , \, \rZ^j_{0} \! > \ee \}} \rZ^j_{t-t_j}}  =   
 e^{-u(s, \lambda) \cZ^\ee_{t} ([0, x])} .
 \end{equation}
Then, note that $\sum_{j\in J} \un_{\{t < t_j \leq t+s\, , \, \rZ^j_{0} \! > \ee \}} \rZ^j_{t+s-t_j}$ is 
independent from $\ccQ^{>\ee}_t$. By the exponential formula for Poisson point measures, we thus $\bP$-a.s.~get 
\begin{equation} \label{surt} 
\! - \! \log \bE \Big[   e^{-\lambda \sum_{j\in J} \un_{\{t < t_j \leq t+s\, , \, \rZ^j_{0} \! > \ee \}} \rZ^j_{t+s-t_j}} \, 
\big| \, \ccQ^{>\ee}_t  \, \Big]  =   xe^{-Dt}\!\!\int_0^{s}\!\!\! da \, e^{-Da}\!\!\int_{(\varepsilon,\infty)}\!\!\!\! 
\!\!\!\!\pi(dr)\big(1 \!- \! e^{-r u(s-a,\lambda)}\big) .
\end{equation}
As $ \varepsilon \! \rightarrow \!0$, 
the right member of (\ref{surt}) tends to $xe^{-Dt}\!\!\int_0^{s} da \, e^{-Da} \big( D u(s\!-\!a, \lambda) 
\! - \! \Ppsi (u(s\!-\!a, \lambda))\big)$ that is equal to $xe^{-Dt}u(s, \lambda)-x\lambda e^{-D(s+t)}$ by a simple integration by parts. This computation combined with (\ref{soust}) and (\ref{surt}), implies 
$$ \lim_{ \varepsilon \rightarrow 0}-\log \bE \big[ e^{-\lambda \cZ^{ \varepsilon}_{t+s} ([0, x]) } \big| \ccQ^{ >\varepsilon }_t \big]= u(s, \lambda) \lim_{ \varepsilon \rightarrow 0} \cZ^{ \varepsilon}_t ([0, x]) + 
xe^{-Dt}u(s, \lambda)-x\lambda e^{-D(s+t)} $$
Namely, $-\log \bE [ e^{-\lambda Z^*_{t+s}} | \ccG_t]= u(s, \lambda) ( Z^*_t + xe^{-Dt})  
- \lambda x e^{-D(s+t)}=  u(s, \lambda) Z_t - \lambda x e^{-D(s+t)}$, which implies the desired result.  \cqfd 
\end{proof}

\begin{lemma}
\label{eemg} We assume that $\Ppsi$ is conservative and non-linear. 
We fix $s_0, \varepsilon \!\in \!(0, \infty)$. For any $t\! \in \!(s_0, \infty)$, we define 
$\cZ^{ \varepsilon}_t$ as follows:

-- If $\Ppsi$ is of infinite variation type, then 
$\cZ^{ \varepsilon}_t= \sum_{i\in I} \un_{\{ \rZ^i_{s_0} > \varepsilon \}} \rZ^i_t \delta_{x_i}$.

--  If $\Ppsi$ is of finite variation type, then $\cZ^{ \varepsilon}_t= \sum_{j\in J} \un_{\{ t_j \leq s_0 \, ; \,  \rZ^j_0 > \varepsilon \}} \rZ^j_{t-t_j} \delta_{x_j}$. 

\noi
Recall the definition of the sigma-field $\ccG_t$.   
Then, for all $t \!\in \! (s_0, \infty)$, all $s, \theta \!\in \! [0, \infty)$ and all $y \!\in\! [0, x]$, 
\begin{equation}
\label{eemgg}
 \textrm{$\bP$-a.s}\qquad \bE \big[  \un_{\{ Z_{t+s} >0\}}  
\, \frac{_{\cZ^{\ee}_{t+s} ([0, y])}}{^{Z_{t+s}}} \, \big( 1-e^{-\theta Z_{t+s}}\big) \, 
\big| 
\, \ccG_t \,  \big] = \un_{\{ Z_{t} >0\}}   \frac{_{\cZ^{\ee}_{t} ([0, y])}}{^{Z_{t}}}  \, \big( 1-e^{-u(s, \theta) Z_{t}}\big) .
\end{equation}
\end{lemma}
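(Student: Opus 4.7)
The plan is to deduce (\ref{eemgg}) from the joint conditional Laplace transform
\begin{equation} \label{goaleq}
\bE\bigl[e^{-\mu \cZ^\ee_{t+s}([0,y]) - \lambda Z_{t+s}} \,\bigm|\, \ccG_t\bigr] = \exp\Bigl(-\bigl(u(s,\mu+\lambda) - u(s,\lambda)\bigr) \cZ^\ee_t([0,y]) - u(s,\lambda) Z_t\Bigr),
\end{equation}
which I claim holds $\bP$-a.s.\ for all $\mu, \lambda \in [0,\infty)$. Once (\ref{goaleq}) is in hand, differentiating both sides in $\mu$ at $\mu=0$ (justified by dominated convergence using $\cZ^\ee_{t+s}([0,y]) \leq Z_{t+s}$) gives $\bE[\cZ^\ee_{t+s}([0,y]) e^{-\lambda Z_{t+s}} \mid \ccG_t] = \partial_\lambda u(s,\lambda)\, \cZ^\ee_t([0,y])\, e^{-u(s,\lambda) Z_t}$. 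I would then integrate over $\lambda \in (0,\theta)$: by Fubini, using $\int_0^\theta e^{-\lambda z}d\lambda = (1-e^{-\theta z})/z$ on $\{z>0\}$ (and noting that $\cZ^\ee_{t+s}([0,y]) \leq Z_{t+s}$ forces the integrand to vanish on $\{Z_{t+s}=0\}$), the left-hand side becomes the left-hand side of (\ref{eemgg}). On the right-hand side the change of variable $w = u(s,\lambda)$ -- valid because $\Ppsi$ is conservative, so $u(s,0)=0$ -- yields $\cZ^\ee_t([0,y]) \int_0^{u(s,\theta)} e^{-wZ_t}dw$, which on $\{Z_t>0\}$ equals $\cZ^\ee_t([0,y])(1-e^{-u(s,\theta)Z_t})/Z_t$, proving (\ref{eemgg}). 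The case $\{Z_t=0\}$ is trivial since $\cZ^\ee_t([0,y]) \leq Z_t$ and $Z_{t+s}=0$ on this event.

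To prove (\ref{goaleq}), I would split $Z_{t+s} = \cZ^\ee_{t+s}([0,y]) + W_{t+s}$ and establish the conditional independence of the two summands given $\ccG_t$, each with an explicit Laplace transform. The collection of cluster indices entering $\cZ^\ee$ is $\ccG_{s_0}$-measurable (hence $\ccG_t$-measurable), because it depends only on $\rZ^i_{s_0}$ (respectively $t_j, \rZ^j_0$) and $x_i$ (resp.\ $x_j$), all encoded in $\ccG_{s_0} \subset \ccG_t$. Given $\ccG_t$, the futures of disjoint clusters evolve independently -- by the Markov property of Theorem \ref{cluster}(c) in the infinite variation case, and by the Markov property of each $\bbP_r$ together with the independence of the Poisson point measure of clusters born in $(t,t+s]$ in the finite variation case. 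The branching property then gives $\bE[e^{-\nu \cZ^\ee_{t+s}([0,y])} \mid \ccG_t] = e^{-u(s,\nu)\cZ^\ee_t([0,y])}$. For $W_{t+s}$ (which in the finite variation case also includes the dust $xe^{-D(t+s)}$ and the clusters born in $(t,t+s]$), a computation parallel to Lemma \ref{pntMrkv} -- during which the deterministic term $\lambda xe^{-D(t+s)}$ cancels against the boundary term produced by the Poisson exponential formula for the newly born clusters -- yields $\bE[e^{-\lambda W_{t+s}} \mid \ccG_t] = e^{-u(s,\lambda)(Z_t - \cZ^\ee_t([0,y]))}$. Multiplying the two Laplace transforms produces (\ref{goaleq}).

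The main obstacle is rigorously justifying the conditional independence and the cluster-wise Markov property when $I$ is infinite in the infinite variation case. I would handle this by the truncation scheme already used in the proof of Lemma \ref{pntMrkv}: fix $\ee' \in (0,\ee)$ and a time $s_1 \in (s_0,t)$, restrict to the finite subfamily of clusters with $\rZ^i_{s_1} > \ee'$ (finite by Lemma \ref{nufi}), derive the analogue of (\ref{goaleq}) for these truncated variables by the Poisson exponential formula combined with the cluster Markov property of Theorem \ref{cluster}(c), and then pass to the limit $\ee' \to 0$ by monotone convergence. The finite variation case is analogous, with the Poisson contributions from clusters born in $(s_0,t]$ and in $(t,t+s]$ treated by the independent factor computation already displayed in the proof of Lemma \ref{pntMrkv}.
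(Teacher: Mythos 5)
Your proposal is correct, but it takes a genuinely different route from the paper's. The paper never writes down the joint conditional Laplace transform (\ref{goaleq}); instead it works directly with the first moment by setting $A(\lambda)=\bE[\cZ^\ee_{t+s}([0,y])\,e^{-\lambda Z_{t+s}}F(\ccP_t)]$ and computing it via the Palm formula (\ref{Palm}) for the Poisson cluster decomposition, combined with Lemma \ref{pntMrkv} and a differentiated form of the cluster Markov property (\ref{Qder}). This produces the identity $\bE[\cZ^\ee_{t+s}([0,y])\,e^{-\lambda Z_{t+s}}\mid\ccG_t]=\partial_\lambda u(s,\lambda)\,\cZ^\ee_t([0,y])\,e^{-u(s,\lambda)Z_t}$ directly, without having to assert or justify any conditional independence of blocks; the ``tag one cluster, condition on the rest'' structure is built into the Palm identity. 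Your route instead proves the full joint Laplace transform (\ref{goaleq}) by splitting $Z_{t+s}=\cZ^\ee_{t+s}([0,y])+W_{t+s}$, arguing conditional independence given $\ccG_t$ and applying the branching property to each block, and then recovers the same first-moment identity by $\partial_\mu|_{\mu=0}$. Both arguments rest on the same Poissonian structure and have essentially equal technical weight once the truncation step you flag is carried out; the Palm computation is slightly more economical because it requires only single-cluster conditioning, whereas your route proves a stronger statement than is used. Your final integration step is slightly cleaner: you integrate the first-moment identity over $\lambda\in(0,\theta)$ and change variables $w=u(s,\lambda)$ with $u(s,0)=0$ (conservativity), giving $(1-e^{-u(s,\theta)Z_t})/Z_t$ directly, whereas the paper integrates over $(0,\infty)$ for two shifted arguments and lets the boundary terms $e^{-v(s)z}$ cancel (their equation (\ref{chgvar})). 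The two are equivalent; yours avoids introducing $v(s)$.
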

\begin{proof} We first consider the infinite variation cases. Note that in these cases,  
$\cZ^{ \varepsilon}_t$ is defined as in (\ref{Pee}). Let $\lambda \! \in \! (0, \infty)$. Recall the notation $Q_{s_0, \ee}\! =\!  \rN_\Ppsi (\, \cdot \, | \rZ_{s_0} \! > \!  \varepsilon)$ from the proof of Lemma \ref{pntMrkv}: by differentiating (\ref{redu2}), we get
\begin{equation}    
\label{Qder}
\textrm{$Q_{s_0, \ee} $-a.s}\qquad Q_{s_0, \ee} \big[ \,\rZ_{t+s} e^{-\lambda \rZ_{t+s}} \, \big| \, 
\rZ_{\, \cdot \, \wedge t} \,  \big] =
\rZ_t \, e^{-u(s, \lambda) \rZ_t } \, \partial_\lambda  u \, (s, \lambda) . 
\end{equation}
Recall from (\ref{defPQt}), the definition of $\ccP_t$. 
Let $F$ be a bounded nonnegative measurable function on 
the space of point measures on $[0, x] \! \times \! \bbD([0, \infty), [0, \infty ])$. We then set 
$A (\lambda)= \bE [\cZ^\ee_{t+s}([0, y]) \, e^{-\lambda Z_{t+s}} F(\ccP_t) ]$.  
By Palm formula (\ref{Palm}), Lemma \ref{pntMrkv} and (\ref{Qder}), we get 
\begin{eqnarray*}
A(\lambda) \!\!  \!\! &= &\!\! \!\! \bE \Big[ \sum_{^{i\in I}} \un_{\{ x_i \in [0, y] \, ; \, \rZ^i_{s_0} > \ee \} } \rZ^i_{t+s} \, 
e^{-\lambda \rZ^{i}_{t+s}}  \; e^{-\lambda \sum_{k\in I \backslash \{ i\}} \rZ^k_{t+s}  } \; F \big(\,  \delta_{(x_i \, , \, \rZ^i_{\, \cdot \, \wedge t} )} + \ccP_t \!-\! \delta_{(x_i \, , \,  \rZ^i_{\, \cdot \, \wedge t})}\,  \big) \,  \Big]  \\
 \!\!  \!\! &= &\!\! \!\! \nu_{s_0} ((\ee, \infty]) \!\!  \int_0^y \! \!\! dr \!\! \int \! Q_{s_0, \ee} (d\rZ) \,
\bE \Big[  \rZ_{t+s} 
e^{-\lambda \rZ_{t+s}} \, e^{-\lambda Z_{t+s}} F \big( \, \delta_{(r \, , \,  \rZ_{\, \cdot \, \wedge t} )} \!+\!  \ccP_t \,  \big) \,  \Big] \\
 \!\!  \!\! &= &\!\! \!\! 
\big(\partial_\lambda  u(s, \lambda) \big) 
 \, \nu_{s_0} ((\ee, \infty]) \!\!  \int_0^y \! \!\! dr \!\! \int \!\! Q_{s_0, \ee} (d\rZ) \,
\bE \Big[ \rZ_{t} 
e^{-u(s,\lambda) \rZ_{t}} \, e^{-u(s,\lambda) Z_{t}} F \big( \, \delta_{(y \, , \,  \rZ_{\, \cdot \, \wedge t} )} \!\!+\!\!  \ccP_t \,  \big) \,  \Big] \\
 \!\!  \!\! &= &\!\! \!\! 
\big(\partial_\lambda  u(s, \lambda) \big) 
\, \bE \big[ \cZ^\ee_{t}([0, y]) e^{-u(s, \lambda) Z_{t}} F(\ccP_t) \big] .          
\end{eqnarray*}
By an easy argument, it implies that $\bP$-a.s.~for all $\lambda \!\in\! (0, \infty)$,
$$  \bE \big[ \, \cZ^\ee_{t+s}([0, y]) \, e^{-\lambda Z_{t+s}} \, \big| \,\ccG_t \,  \big] = \cZ^\ee_{t}([0, y]) \, e^{-u(s, \lambda) Z_{t}} \, \partial_\lambda  u(s, \lambda)  . $$
Thus, $\bP$-a.s.~for all $ \lambda , \theta \!\in \! (0, \infty)$, 
\begin{eqnarray}
\label{subt}
 \bE \big[ \un_{\{ Z_{t+s} >0\}} \cZ^\ee_{t+s}([0, y]) e^{-\lambda Z_{t+s}} \big( 1-e^{-\theta Z_{t+s}} \big)
 \, \big| \,\ccP_t \,  \big] & = & \nonumber \\ 
   & & \hspace{-65mm} \un_{\{ Z_{t} >0\}} \cZ^\ee_{t}([0, y]) \, \big( e^{-u(s, \lambda) Z_{t}}  \partial_\lambda  u(s, \lambda) - e^{-u(s, \lambda+ \theta) Z_{t}}  \partial_\lambda  u(s, \lambda\!+\!\theta) \big).
\end{eqnarray}  
When we integrate the first member of (\ref{subt}) in $\lambda$ on $(0, \infty)$, we get the first member of 
(\ref{eemgg}). Then, by an easy change of variable, we get 
\begin{equation}
\label{chgvar}
\forall \lambda_0 \in [0, \infty), \; \forall z \in (0, \infty), \quad  
\int_{\lambda_0}^{\infty} \!\!\! d\lambda \, e^{-u(s, \lambda) z}  \, \partial_\lambda  u(s, \lambda) = \frac{_1}{^z} \big(e^{-u(s, \lambda_0) z}  \! -\! e^{v(s) z} \big), 
\end{equation}
where we recall that $v(s)\!= \! \lim_{\lambda \rightarrow \infty} u(s, \lambda)$, which is infinite if $\Ppsi$ is persistent and finite otherwise. 
Since $\Ppsi$ is conservative, recall that $\kappa (s)\!= \! \lim_{\lambda \rightarrow 0+} u(s, \lambda)= 0$. 
Thus, when we integrate the second member of (\ref{subt}) in $\lambda$ on $(0, \infty)$, we obtain the second member of (\ref{eemgg}), which completes the proof of the lemma in the infinite variation cases.

\medskip

We next consider the finite variation cases. Note that the definition of $\cZ^{ \varepsilon}$ is slightly different from the proof of Lemma \ref{pntMrkv}. Recall from (\ref{defPQt}), the definition of $\ccQ_t$. 
Let $F$ be a bounded nonnegative measurable function on 
the space of point measures on $[0, x] \! \times  \! [0, \infty) \! \times \! \bbD([0, \infty), [0, \infty ])$. We set 
$A (\lambda)= \bE [\cZ^\ee_{t+s}([0, y]) \, e^{-\lambda Z_{t+s}} F(\ccQ_t) ]$.  
By Palm formula (\ref{Palm}) and Lemma \ref{pntMrkv} we get 
\begin{eqnarray*}
A(\lambda) \!\!  \!\!\! &= &\!\! \!\!\! \bE \Big[ \sum_{^{j\in J}} 
\un_{\{ x_j \in [0, y] \, ; \, t_j\leq s_0 \, ; \, \rZ^j_{0} > \ee \} } \rZ^j_{t+s-t_j} \, 
e^{-\lambda \rZ^{j}_{t+s-t_j}}  \; e^{-\lambda \sum_{k\in J \backslash \{ j\}}\un_{\{ t_k\leq t+s\}} \rZ^k_{t+s-t_k}  } \; e^{-\lambda xe^{-D(t+s)}} \;\\
&&\hspace{27pt}\times F \big(\,  \delta_{(x_j \, , \, t_j \, , \, \rZ^j_{\, \cdot \, \wedge (t-t_j)} )} + \ccQ_t \!-\! \delta_{(x_j \, , \, t_j \, , \, \rZ^j_{\, \cdot \, \wedge (t-t_j)} )}\,  \big) \,  \Big]  \\
\!\!  \!\!\! &= &\!\! \!\!\!  \int_0^y \! \!\! da \!\! \int_0^{s_0} \! \!\! db \, e^{-Db} \!\! \int_{(\ee,\infty)} \! \pi(dr) \,
\bbE_r\bigg[ \bE\Big[\rZ_{t+s-b}e^{-\lambda \rZ_{t+s-b}}e^{-\lambda Z_{t+s}} F \big( \, \delta_{(a \, , \, b \, , \,  \rZ_{\, \cdot \, \wedge (t-b)} )} \!+\!  \ccQ_t \,  \big) \,  \Big]\bigg] \\
\!\!  \!\!\! &= &\!\! \!\!\!  \partial_\lambda  u(s, \lambda)\!\!\! \int_0^y \! \!\!\! da \!\! \int_0^{s_0} \! \!\!\!\! db \, e^{-Db} \!\! \int_{(\ee,\infty)} \!\!\!\!\!\! \pi(dr) \,
\bbE_r\bigg[ \bE\Big[\rZ_{t-b}e^{-u(s, \lambda) \rZ_{t-b}}e^{-u(s, \lambda) Z_{t}} F \big( \, \delta_{(a \, , \, b \, , \,  \rZ_{\, \cdot \, \wedge (t-b)} )} \!+\!  \ccQ_t \,  \big) \,  \Big]\bigg] \\
\!\!  \!\!\! &= &\!\! \!\!\!  \partial_\lambda  u(s, \lambda) \bE [\cZ^\ee_{t}([0, y]) \, e^{-u(s, \lambda) Z_{t}} F(\ccQ_t) ]. 
\end{eqnarray*}
Then, we argue exactly as in the infinite variation cases. \cqfd 
\end{proof}

We now complete the proof of Lemma \ref{pntconv}. If 
$\Ppsi$ is not conservative, then we have already proved that on $\{ \zeta_\infty  \! < \! \infty\}$, $M$ has an Eve in finite time. Moreover, conditionally on $\{ \lim_{t\rightarrow \infty} Z_t \!= \! 0 \}$, $M$ is distributed as the frequency process of a CSBP($\Ppsi (\cdot + \gamma)$) that is sub-critical, and therefore conservative. Thus, without loss of generality, we assume that $\Ppsi$ is conservative. In 
this case, Lemma \ref{eemg} applies: 
we fix $s_0, \varepsilon \! \in \!  (0, \infty)$ and we let 
$\theta$ go to $\infty$ in (\ref{eemgg}); this implies that $t \mapsto  \un_{\{ Z_{t} >0\}}   \frac{_{\cZ^{\ee}_{t} ([0, y])}}{^{Z_{t}}}$ is a super-martingale. Then,  
$$ \textrm{$\bP$-a.s.} \quad \forall q\in \bbQ \cap [0, x] \, , \quad \lim_{t\rightarrow \infty}  \un_{\{ Z_{t} >0\}}   \frac{_{\cZ^{\ee}_{t} ([0, q])}}{^{Z_{t}}}=: R^{ \varepsilon}_q \; \textrm{exists}. $$ 
Then observe there exists a finite subset $S_{s_0, \varepsilon}\!:= \! \{ X_1\!< \! \ldots \! <\!  X_N\} \! \subset\! [0, x]$ 
such that a.s.~for all $t \! \in \! (s_0, \infty)$, 
$\cZ^{ \varepsilon}_t ([0, x] \backslash S_{s_0, \varepsilon})\! =\! 0$. Then, for any $1\! \leq \! k \! \leq \! N$, 
there exists $q, q^\prime \in \bbQ \! \cap \! [0, x]$ such that $q\! < \! X_k \! < \! q^\prime$ and $\un_{\{ Z_t >0\}} M_{t} (\{ X_k\}) \!= \! \un_{\{ Z_{t} >0\}}  \cZ^{\ee}_{t} ((q, q^\prime])/ Z_{t} \! \longrightarrow \!  R^{ \varepsilon}_{q^\prime} \!-\! R^{ \varepsilon}_q$, as 
$t\! \rightarrow \!\infty$. 

Now observe that if $\Ppsi$ is of infinite variation type, 
$\{ x_i \, ; \, i\! \in \! I\}= \bigcup_{n,m \in \bbN} S_{2^{-m}, 2^{-n}}$. Thus, on the event 
$\{ \zeta_0 \!=\! \infty\}$ (no extinction in finite time), this entails that $\bP$-a.s.~for all $i\! \in \! I $, 
$\lim_{t\rightarrow \infty} M_t (\{ x_i\})$ exists. Moreover, for all $y\! \notin\!  \{ x_i \, ; \, i\! \in \! I\}$ and all $t\! \in \!(0, \infty)$, $M_t (\{ y\})= 0$. Finally, on $\{ \zeta_0 \! < \! \infty\}$, we have already proved that 
$M$ has an Eve in finite time. This completes the proof of Lemma \ref{pntconv} when $\Ppsi$ is of infinite variation type.     

If $\Ppsi$ is finite variation type, note that $\{ x_j \, ; \, j\! \in \! J\}= \bigcup_{n,m \in \bbN} S_{m, 2^{-n}}$. Since there is no extinction in finite time, we get that $\bP$-a.s.~for all $j\! \in \! J $, 
$\lim_{t\rightarrow \infty} M_t (\{ x_j\})$ exists, which completes the proof since for all 
$y\! \notin\!  \{ x_j \, ; \, j\! \in \! J\}$ and all $t\! \in \! (0, \infty)$, we have $M_t (\{ y\})= 0$. \cqfd

%

\end{document}